\theoremstyle{plain}
\newtheorem{thm}{Theorem}[section]
\newtheorem{lem}[thm]{Lemma}
\newtheorem{prop}[thm]{Proposition}
\newtheorem{defn}[thm]{Definition}
\newtheorem{cor}[thm]{Corollary}
\newtheorem{ex}{Example}
\newtheorem*{defin}{Definition}
\theoremstyle{remark}
\newtheorem{rem}[thm]{Remark}
\newenvironment{pf}
{\begin{proof}} {\end{proof}}
\newcommand{\disp}{\displaystyle}
\DeclareMathOperator{\dist}{dist}
\DeclareMathOperator{\di}{div}
\newcommand{\eps}{\varepsilon}
\newcommand{\vp}{\varphi}
\newcommand{\al}{\alpha}
\newcommand{\be}{\beta}
\newcommand{\ga}{\gamma}
\newcommand{\de}{\delta}
\newcommand{\Ga}{\Gamma}
\newcommand{\te}{\theta}
\newcommand{\la}{\lambda}
\newcommand{\La}{\Lambda}
\newcommand{\om}{\omega}
\newcommand{\Om}{\Omega}
\newcommand{\si}{\sigma}
\newcommand{\iny}{\infty}
\newcommand{\tr}{\text{tr}}
\newcommand{\del}{ \partial}
\newcommand{\su}{\subset}
\newcommand{\LP}{\Delta}
\newcommand{\gr}{\nabla}
\newcommand{\norm}[1]{\left\| #1\right\|}
\newcommand{\innp}[1]{\left< #1 \right>}
\newcommand{\abs}[1]{\left\vert#1\right\vert}
\newcommand{\set}[1]{\left\{#1\right\}}
\newcommand{\brac}[1]{\left[#1\right]}
\newcommand{\pr}[1]{\left( #1 \right) }
\newcommand{\N}{\ensuremath{\mathbb{N}}}
\newcommand{\R}{\ensuremath{\mathbb{R}}}
\newcommand{\Z}{\ensuremath{\mathbb{Z}}}
\begin{document}

\title{Variable-coefficient parabolic theory as a \\ high-dimensional limit of elliptic theory}

\author{Blair Davey}
\address[B. Davey]{Department of Mathematical Sciences, Montana State University, Bozeman, MT 59717}
\email{\textcolor{blue}{\href{mailto:}{blairdavey@montana.edu}}}
\thanks{B. D. has been partially supported by the NSF LEAPS-MPS grant DMS-2137743, and a Simons Foundation Collaboration Grant 430198.}

\author{Mariana Smit Vega Garcia}
\address[M. Smit Vega Garcia]{Department of Mathematics, Western Washington University, Bellingham, WA 98225}
\email{\textcolor{blue}{\href{mailto:}{smitvem@wwu.edu}}}
\thanks{M.S.V.G has been partially supported by the NSF grant DMS-2054282.}

\maketitle

\begin{abstract}

This paper continues the study initiated in \cite{Dav18}, where a high-dimensional limiting technique was developed and used to prove certain parabolic theorems from their elliptic counterparts. 
In this article, we extend these ideas to the variable-coefficient setting. 
This generalized technique is demonstrated through new proofs of three important theorems for variable-coefficient heat operators, one of which establishes a result that is, to the best of our knowledge, also new.
Specifically, we give new proofs of $L^2 \to L^2$ Carleman estimates and the monotonicity of  Almgren-type frequency functions, and we prove a new monotonicity of Alt-Caffarelli-Friedman-type functions.
The proofs in this article rely only on their related elliptic theorems and a limiting argument. 
That is, each parabolic theorem is proved by taking a high-dimensional limit of a related elliptic result.
\end{abstract}

\section{Introduction}
\label{S:intro}

In this paper, we explore the connections between the elliptic and parabolic theory of partial differential equations, generalizing the work done by the first-named author for constant-coefficient equations in \cite{Dav18}. 
Specifically, we generalize the ideas from \cite{Dav18} and establish a technique that can be used to prove variable-coefficient parabolic theorems from their appropriate elliptic counterparts. 
The key idea is that certain parabolic estimates may be obtained by taking high-dimensional limits of their corresponding elliptic results. 
We obtain information about solutions to $\text{div}(A\nabla u)+\partial_t u=0$ on the parabolic side by analyzing the behavior of solutions to non-homogeneous equations of the form $\text{div}(\kappa \nabla v)=\kappa \ell$ on the elliptic side.
Here, $A$ has a specific structure, $v$ and $\kappa$ are defined in terms of $u$ and $A$ (see Lemmas \ref{ChainRuleLem1} and \ref{ChainRuleLem}), respectively, and $\ell$ depends on both $u$ and $A$. 
Rewriting our elliptic equation as $\disp \kappa^{-1}\text{div}(\kappa\nabla v)=\ell$, we notice that the associated operator is a special type of Witten Laplacian, or weighted Laplacian (see 2.4 in \cite{G}, and also \cite{G}, \cite{FLL}, \cite{CESS}, \cite{L}, \cite{CES}, \cite{M}, \cite{LPN}, and \cite{LX}). 
From this perspective, the ideas in this article show how to obtain results for variable-coefficient parabolic operators from those for the Witten Laplacian.

Perelman first considered parabolic theory as a high-dimensional limit of elliptic theory in \cite{P}. 
This general principle was discussed in the blog of Tao \cite{Tao}, modified in the coursenotes of Sverak \cite{S}, then developed and applied in \cite{Dav18}. 
In our setting, we follow the ideas from \cite{Dav18}; namely, we use classical probabilistic formulae, essentially going back to Wiener \cite{W}, with a slight modification used by Sverak in \cite{S}. 
However, to account for the presence of variable-coefficients, we have modified (and complicated) the change of variables formula from \cite{Dav18}.
Once the general framework has been established, we demonstrate the utility of this technique by establishing three new proofs of theorems regarding variable-coefficient parabolic equations.
In comparison to the results of \cite{Dav18} for constant-coefficient equations, the techniques here are substantially modified to account for the variable-coefficients.

At the heart of the high-dimensional limiting process is a sequence of maps $F_{d,n} : \R^{d \times n} \to \R^d \times \R_+$ which take high-dimensional space points $y \in \R^{d \times n}$ on the elliptic side to space-time points $\pr{x,t} \in \R^d \times \R_+$ on the parabolic side.
These maps can be viewed from three perspectives.
First, they naturally arise through certain random walk processes.
Second, we can use the chain rule to see how these maps connect elliptic and parabolic operators.
That is, suppose we are given some parabolic function $u = u\pr{x,t}$ defined on $\R^d \times \R_+ $ and we define $v_n = v_n(y)$ on $\R^{d \times n}$ by $v_n(y) = u\pr{F_{d,n}(y)}$.
A computation involving only the chain rules shows that if $\di \pr{A \gr u} + \del_t u = 0$, then $\di(\kappa_n \nabla v_n)=\kappa_n \, \ell_n$, where $\kappa_n$ and $\ell_n$ are defined in terms of $u$ and $A$.
In other words, the maps $F_{d,n}$ provide a way of constructing a sequence of ``elliptic" functions from a given ``parabolic" function.
Finally, we can see the power of these maps through their pushforward measures.
In the constant-coefficient case, the pushforward by $F_{d,n}$ of the Lebesgue measure weighted with the fundamental solution of the Laplacian is a space-time Lebesgue measure that is weighted by a function that approximates the Gaussian.
That is, the pushforward connects the elliptic and parabolic fundamental solutions.
In our variable-coefficient setting, we do not have explicit descriptions of the fundamental solutions, so this connection is not as precise, but it mimics the constant-coefficient behavior.

Once we have the transformation maps, our general technique is as follows:
Given a parabolic function $u$, we use $F_{d, n}$ to construct a sequence of functions $\set{v_n}$. 
We apply an elliptic result to each of the $v_n$ functions, and then use the pushforward relationships to reinterpret this result in terms of $u$.
After a limiting process, we arrive at a result for the original parabolic function $u$.

Our first new proof is of an $L^2 \to L^2$ Carleman estimate for parabolic operators of the form $\text{div}(A\nabla)+\partial_t$. 
For elliptic operators, the original Carleman estimates are attributed to Carleman \cite{Car39}, with subsequent advances by Cordes \cite{Cor56}, Aronszajn \cite{Aro57} and Aronszajn et al \cite{AKS62}.
Significant contributions to the theory of elliptic Carleman estimates with applications to strong unique continuation include the work of Jerison-Kenig \cite{JK85}, Sogge \cite{Sog90}, Koch-Tataru \cite{KT01}, and the references therein.
When $A=I$, the parabolic Carleman estimate was proved by Escauriaza in \cite{E}; see also \cite{EV01}, \cite{EF03}, \cite{Ves03}, \cite{Fer03}, \cite{Ngu10} for (variable-coefficient) generalizations that followed.
The article \cite{KT09} of Koch and Tataru provides a nice overview of parabolic Carleman estimates, and the results therein apply to very general parabolic operators. 

Our second novel proof shows that Almgren-type frequency functions \cite{Alm79} associated with parabolic operators of type $\text{div}(A\nabla)+\partial_t$ are monotonically non-decreasing. 
In the variable-coefficient elliptic setting, Almgren-type monotonicity formulas have been used to establish unique continuation results, see \cite{GL86} and \cite{GL87}.
When $A = I$, monotonicity of the parabolic frequency function was originally proved by Poon in \cite{P2} and used to establish strong unique continuation results for caloric functions. 
Since then, the frequency function approach has been used extensively in the study of parabolic unique continuation problems; see for example \cite{EFV06}, \cite{EKPV06}, \cite{GK18}, \cite{Zha18}, and \cite{KQ22}.

Both the Carleman estimates and the monotonicity of the Almgren-type frequency functions, motivated by their elliptic counterparts, allowed the authors of \cite{E} and \cite{P2} (for example) to use the established techniques for elliptic theory to prove strong unique continuation for solutions to the heat equation. 
Since then, a wealth of unique continuation results for variable-coefficient heat operators have been established using both Carleman estimates and frequency functions.
Recently, Carleman techniques have been used to establish space-like quantitative uniqueness of solutions to variable-coefficient parabolic equations that are averaged in time \cite{Zhu18} and at a particular time-slice \cite{AB22}.

Almgren-type monotonicity formulas have also been used extensively in the context of free boundary problems to obtain regularity of solutions and of the free boundary. 
In the case of the parabolic constant-coefficient Signorini problem, a truncated version of Almgren's monotonicity formula was proved in \cite{DGPT}, leading to the optimal regularity of solutions and analysis of the free boundary. 
Elliptic variable-coefficient  Almgren-type monotonicity formulas have also been widely used to study numerous free boundary problems; see for example \cite{Gui}, \cite{GSVG}, \cite{GPSVG}, \cite{GPSVG2}, \cite{JPSVG}, \cite{BBG}, \cite{DT}. 
We refer the reader to \cite{PSU} for a beautiful introduction to the use of Almgren-type monotonicity formulas in free boundary problems.

Our third new proof establishes a result that is, to the best of our knowledge, also new. 
In other words, our technique leads to an original parabolic result: 
We prove an Alt-Caffarelli-Friedman-type (ACF-type) monotonicity formula for variable-coefficient heat operators.
The groundbreaking work of Alt, Caffarelli, and Friedman in \cite{ACF} introduced the use of one such monotonicity formula to study two-phase free boundary elliptic problems. 
Another version of this formula was proved in \cite{C} by Caffarelli and extended by Caffarelli and Kenig in \cite{CK} to establish the regularity of solutions to parabolic equations and their singular perturbations. 
Later on, Caffarelli, Jerison, and Kenig \cite{CJK} considered non-homogeneous elliptic equations in which  the right-hand side of the equation need not vanish at the free boundary. 
Their main result is not a monotonicity result per se, but rather a clever uniform bound on the monotonicity functional, which is just as useful as monotonicity itself. 
Later on, Matevosyan and Petrosyan \cite{MP} further extended that result, proving an 
almost monotonicity estimate for non-homogeneous elliptic and parabolic operators with variable coefficients. 
We refer the interested reader to \cite{PSU} and \cite{Apu18} for an introduction to the use of the ACF monotonicity formula; \cite{PSU} deals with the elliptic setting, while \cite{Apu18} addresses both the parabolic and elliptic settings.  
In the context of almost minimizers of variable-coefficient Bernoulli-type functionals, ACF-type monotonicity formulas have also been used to study the free boundary, see for example \cite{DESVGT}.

Given that our transformation maps relate solutions to homogeneous parabolic equations to a sequence of solutions to non-homogeneous elliptic equations, our proofs of the monotonicity results have added complexity.
More specifically, given that each $v_n$ solves an elliptic equations with a right-hand side, we require elliptic monotonicity results that apply to solutions to non-homogeneous equations.
Therefore, the first step in each parabolic monotonicity proof is to establish the related underlying variable-coefficient elliptic result with a right-hand side.
These new elliptic results, which appear at the beginning of Sections \ref{S:Almgren} and \ref{S:ACF}, may be of independent interest.
We point out that these elliptic estimates generalize both the constant-coefficient and the homogeneous results.

The monotonicity of frequency functions is often proved by showing that the derivative of the frequency function has a fixed sign.
In our proofs of the monotonicity of variable-coefficient Almgren-type and ACF-type frequency functions, we show that each parabolic frequency function may be described as a pointwise limit of a sequence of elliptic frequency functions. 
While we know that each such elliptic frequency function is differentiable and almost monotonic, that is not sufficient to guarantee the differentiability of the corresponding parabolic frequency function. 
Therefore, our proofs rely on a more delicate analysis that allows us to conclude directly that our parabolic frequency functions are monotonic.
These ideas are described at the ends of the proofs of Theorems \ref{T:parabolicALM} and \ref{T:ACFp}.

We work with time-independent variable-coefficient operators for which the coefficient matrix has a specific structure.
That is, we consider symmetric matrices of the form $A = G G^T$, where $G$ is the Jacobian of some invertible map $g : \R^d \to \R^d$.
Clearly, there are bounded, elliptic coefficient matrices $A$ whose structure is not of this form.
However, if $A = I$, then $A$ is associated to the identity map $g(z) = z$ with Jacobian $G = I$, showing that this structural condition on $A$ is a reasonable generalization to constant-coefficient operators.
In subsequent work, we will explore operators with even more general coefficient matrices.

The article is organized as follows. 
In Sections \ref{S:prelim} and \ref{S:measure}, we develop the framework that connects the elliptic and parabolic theory.
That is, we introduce and examine the maps $F_{d,n} : \R^{d \times n} \to \R^d \times \R_+$ that take points in the high-dimensional (elliptic) space to space-time (parabolic) points.
Section \ref{S:prelim} takes the perspective of random walks to introduce these maps, then uses the chain rule to explore how these maps relate elliptic and parabolic operators.
Section \ref{S:measure} examines these maps from a measure theoretic perspective.
In particular, we present the pushforward computations and describe how the integrals on the elliptic side are related to those on the parabolic side.
These two sections contain a collection of calculations and statements that will be referred to throughout the article.

The $L^2 \to L^2$ variable-coefficient parabolic Carleman estimate is proved in Section \ref{S:Carleman}. 
The Almgren-type frequency function theorem for variable-coefficient heat operators is presented in Section \ref{S:Almgren}. 
Finally, Section \ref{S:ACF} contains the monotonicity result for Alt-Caffarelli-Friendman-type energies associated with variable-coefficient heat operators. 

\subsection*{Acknowledgements}
We thank Stefan Steinerberger for interesting discussions.

\section{Elliptic-to-parabolic transformations}
\label{S:prelim}

In this section, we construct the transformations that connect so-called parabolic functions $u = u(x,t)$ defined on $\R^d \times \pr{0, T}$ to a sequence of elliptic functions $v_n = v_n(y)$ defined in $\R^{d \times n}$ for all $n \in \N$.
More specifically, for each $n \in \N$, we construct a mapping of the form
\begin{align*}
F_{d,n} :  \R^{d \times n} &\to \R^d \times \R_+ \\
 y &\mapsto (x,t)
\end{align*}
that takes element $y$ in (high-dimensional) space $\R^{d \times n}$ to elements $(x,t)$ in space-time $\R^d \times \R_+$.
Given a function $u = u(x,t)$ defined on a space-time domain, i.e. a subset of $\R^d \times \R_+$, we use $F_{d,n}$ to define a function $v_n = v_n(y)$ on the space $\R^{d \times n}$ by setting $v_n(y) = u(F_{d,n}(y))$.
As we show below via the chain rule, if $u$ is a solution to a backward parabolic equation, then each $v_n$ is a solution to some non-homogeneous elliptic equation.
This observation explains why we think of $u$ as a parabolic function and of each $v_n$ as an elliptic function.
Moreover, as $n$ becomes large, the function $v_n$ behaves (heuristically) more and more like a solution to a homogeneous elliptic equation.
As such, the transformation $F_{d,n}$ becomes more useful to our purposes as $n \to \iny$, thereby illuminating why the notion of a high-dimensional limit is relevant here.
From another perspective, when we use $F_{d,n}$ to pushforward measures on spheres and balls in $\R^{d \times n}$, we produce measures in space-time that are weighted by approximations to generalized Gaussians.
This perspective is explored in the next section, Section \ref{S:measure}.

We can think of the transformations $F_{d,n} : \R^{d \times n} \to \R^d \times \R_+$ in a number of ways.
On one hand, as we show in this section, these maps are constructed so that each $v_n$ solves an elliptic equation whenever $u$ solves a parabolic equation.
This viewpoint is purely computational as these relationships are illuminated by the chain rule.
This perspective is perhaps the most (easily) checkable, but it is somewhat mysterious.
Random walks are the underlying mechanism in defining these mappings, so we begin with that perspective.

Let $y = \pr{y_{1,1}, y_{1,2}, \ldots, y_{1,n}, \ldots, y_{d,1}, y_{d,2}, \ldots, y_{d,n}} \in \R^{d \times n}$ denote the variables that play the role of the ``random steps" in our random walk.
For some $t > 0$, assume that $y$ satisfies
\begin{equation}
\label{tDef}
2dt = \sum_{i = 1}^d \sum_{j = 1}^n y_{i,j}^2.
\end{equation}
In this setting, we do not fix the step size but simply assume that $y$ is uniformly distributed over the sphere of radius $\sqrt{2dt}$.
Define 
\begin{equation}
\label{zDef}
z_i = y_{i,1} + y_{i,2} + \ldots + y_{i, n} \quad \text{ for } i = 1, \ldots, d
\end{equation}
so that $z = \pr{z_1, \ldots, z_d} \in \R^d$.
In the notation from \cite{Dav18}, we define $f_{d,n} : \R^{d \times n} \to \R^d$ so that 
\begin{equation}
\label{fdnDef}
z = f_{d,n}(y).
\end{equation}
Now let 
$$g : \R^d \to \R^d$$ 
be an invertible function with inverse function 
$$h: \R^d \to \R^d.$$
Set $x = g(z) \in \R^d$ so that
\begin{equation}
\label{xDef}
x_i = g_i(z)  \quad \text{ for } i = 1, \ldots, d
\end{equation}
and then since $z = h(x)$ we have 
\begin{equation*}
z_i = h_i(x)\quad \text{ for } i = 1, \ldots, d.
\end{equation*}
For the moment, assume that $g$ is sufficiently regular for the computations below to hold in a weak sense. 
When further regularity is needed, we specify it. 

The Jacobian of $g = \pr{g_1, \ldots, g_d}$ is a $d \times d$ invertible matrix function whose inverse matrix is the Jacobian of $h = \pr{h_1, \ldots, h_d}$.
Let $G$ and $H$ denote the Jacobian matrices of $g$ and $h$, respectively.
That is,
\begin{equation}
\label{Jacobians}
G(z) = \begin{bmatrix}
\frac{\del g_1}{\del z_1} & \frac{\del g_1}{\del z_2} \ldots & \frac{\del g_1}{\del z_d}  \\
\frac{\del g_2}{\del z_1} & \frac{\del g_2}{\del z_2} \ldots & \frac{\del g_2}{\del z_d}  \\
\vdots & \ddots & \vdots \\
\frac{\del g_d}{\del z_1} & \frac{\del g_d}{\del z_2} \ldots & \frac{\del g_d}{\del z_d}
\end{bmatrix},
\quad
H(x) = \begin{bmatrix}
\frac{\del h_1}{\del x_1} & \frac{\del h_1}{\del x_2}  \ldots & \frac{\del h_1}{\del x_d} \\
\frac{\del h_2}{\del x_1} & \frac{\del h_2}{\del x_2}  \ldots & \frac{\del h_2}{\del x_d} \\
\vdots & \ddots & \vdots \\
\frac{\del h_d}{\del x_1} & \frac{\del h_d}{\del x_2} \ldots & \frac{\del h_d}{\del x_d}
\end{bmatrix}.
\end{equation}
Let $\ga(z) = \det G(z)$ and $\eta(x) = \det H(x)$.
From \eqref{Jacobians}, we obtain
\begin{align*}
    I &= G(z)H(x) = G(z)H(g(z)) = G(h(x))H(x) \\
    I &= H(x)G(z)= H(x) G(h(x)) = H(g(z))G(z) \\
    1 &= \ga(z) \eta(x) = \ga(h(x)) \eta(x) = \ga(z) \eta(g(z)).
\end{align*}

Now we collect some observations that follow from the Chain Rule.
For the first lemma, we describe how these determinants and their derivatives transform through the map $g \circ f_{d,n}$.

\begin{lem}[Determinant Chain Rule Lemma]
\label{ChainRuleLem1}
For $\ga(z) = \det G(z)$ and $\eta(x) = \det H(x)$ as above, define $\kappa_n : \R^{d \times n} \to \R$ to satisfy 
\begin{equation}
    \label{kappaDef}
    \kappa_n(y) = \ga(f_{d,n}(y)) = \ga(z) = \frac 1 {\eta\pr{x}} = \frac 1 {\eta\pr{g(f_{d,n}(y))}}.
\end{equation}
Then
\begin{equation*}
    \begin{aligned}
    \frac{\del \log \kappa_n}{\del y_{i,j}}
    &= \tr\pr{H \pr{ g(z)} \, \frac{\del G(z)}{\del z_i}} 
    = \tr\pr{H \pr{x} \, \frac{\del G}{\del z_i}\pr{h(x)}} \\
    y \cdot \gr_y \log \kappa_n
    &= \tr\pr{H(g(z)) \innp{\gr G(z), z}}
    = \tr\pr{H(x) \innp{\gr_z G(h(x)), h(x)}}.
    \end{aligned}
\end{equation*}
\end{lem}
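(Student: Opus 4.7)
The plan is to reduce the statement to a single application of Jacobi's formula for the derivative of a determinant, after first peeling off the dependence on $y$ via the chain rule. The definitions give $\kappa_n(y)=\ga(f_{d,n}(y))$ with $f_{d,n}(y)=z$ and $z_i=\sum_{j=1}^n y_{i,j}$, so $\del z_k/\del y_{i,j}=\de_{ki}$ is the only nontrivial Jacobian entry to track. This should make the first identity essentially immediate once the correct formula for $\del_{z_i}\log\det G$ is in hand.

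First I would apply the chain rule to $\log\kappa_n = (\log\ga)\circ f_{d,n}$ to obtain
\[
\frac{\del \log\kappa_n}{\del y_{i,j}}
= \sum_{k=1}^d \frac{\del \log\ga}{\del z_k}(z)\,\frac{\del z_k}{\del y_{i,j}}
= \frac{\del \log\ga}{\del z_i}(z),
\]
since $\del z_k/\del y_{i,j}=\de_{ki}$. Next I would invoke Jacobi's formula for a smooth family of invertible matrices, namely
\[
\frac{\del}{\del z_i}\log\det G(z)
= \tr\!\left(G(z)^{-1}\,\frac{\del G(z)}{\del z_i}\right),
\]
and substitute $G(z)^{-1}=H(g(z))$, which follows directly from the three identities collected immediately above the lemma (specifically $H(g(z))G(z)=I$). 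This yields the first claimed formula, and the alternate form in terms of $H(x)$ and $h(x)$ is just the re-expression $z=h(x)$.

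For the second identity I would simply contract the first with $y_{i,j}$ and use linearity of the trace. Summing,
\[
y\cdot\gr_y \log\kappa_n
= \sum_{i,j} y_{i,j}\,\tr\!\left(H(g(z))\,\frac{\del G(z)}{\del z_i}\right)
= \tr\!\left(H(g(z))\,\sum_{i=1}^d\Bigl(\textstyle\sum_{j=1}^n y_{i,j}\Bigr)\frac{\del G(z)}{\del z_i}\right),
\]
and recognize the inner sum $\sum_j y_{i,j}=z_i$ by the definition \eqref{zDef} of $f_{d,n}$. This converts $\sum_i z_i\,\del_{z_i}G(z)$ into $\innp{\gr G(z),z}$, giving the desired expression; the $x$-variable version again follows from $z=h(x)$.

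I do not anticipate a genuine obstacle here; the content is bookkeeping with the chain rule together with a standard determinant derivative identity. The one small point worth stating cleanly for the reader is the dual role of the inverse Jacobian: the identity $G(z)^{-1}=H(g(z))$ is what allows Jacobi's formula, which naturally produces $G^{-1}$, to be rewritten in terms of $H$ composed with $g$, which is the form used in the statement (and which will be convenient later when the elliptic equation for $v_n$ is derived).
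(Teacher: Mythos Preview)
Your proposal is correct and follows essentially the same approach as the paper: both proofs apply the chain rule using $\del z_k/\del y_{i,j}=\de_{ki}$, invoke Jacobi's formula $\del_{z_i}\log\det G=\tr(G^{-1}\del_{z_i}G)$, substitute $G(z)^{-1}=H(g(z))$, and then obtain the second identity by summing over $i,j$ and recognizing $\sum_j y_{i,j}=z_i$.
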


\begin{pf}
Since $\displaystyle \frac{\del z_k}{\del y_{i,j}} = \de_{k, i}$, then an application of Jacobi's formula shows that
\begin{align}
\label{logExpression}
\frac{\del \log \kappa_n(y)}{\del y_{i,j}} 
&= \frac{\del \log \ga(z)}{\del z_i} 
= \tr \pr{G^{-1}(z) \frac{\del G(z)}{\del z_i}}
= \tr\pr{H \pr{ g(z)} \, \frac{\del G(z)}{\del z_i}} . 
\end{align}
We then get
\begin{align*}
    y \cdot \gr_y \log \kappa_n
    = \sum_{i=1}^d \sum_{j = 1}^n y_{i, j} \frac{\del \log \kappa_n(y)}{\del y_{i,j}} 
    &= \sum_{i=1}^d z_{i} \frac{\del \log \ga(z)}{\del z_{i}} 
    = \innp{\gr \log \ga(z), z}
    = \tr\brac{H(g(z)) \innp{\gr G(z) , z}} .
\end{align*}
\end{pf}

The next set of observations shows how the derivatives of some parabolic function $u : \R^d \times \pr{0, T} \to \R$ can be related to those of its associated elliptic functions $v_n: B_{\sqrt{2dT}} \su \R^{d \times n} \to \R$.
That is, given a parabolic function $u = u(x,t)$, we define the elliptic functions $v_n = v_n(y) = u(F_{d,n}(y))$, where
\begin{equation}
\label{FdnDef}
F_{d,n}(y)=(x,t)=\pr{\pr{g\circ f_{d,n}}(y), \frac{|y|^2}{2d}}.
\end{equation}
The following set of results justifies why we refer to $u$ as parabolic and each $v_n$ as elliptic.

\begin{lem}[Solution Chain Rule Lemma]
\label{ChainRuleLem}
Given $u : \R^d \times \pr{0, T} \to \R$, define $v_n: B_{\sqrt{2dT}} \su \R^{d \times n} \to \R$ to satisfy 
$$v_n(y) = u\pr{F_{d,n}(y)}.$$ 
Define $B = B(z)$ to be a $d \times d$ matrix function with entries 
$$b_{k, \ell} = \gr_z g_k \cdot \gr_z g_\ell = \sum_{i=1}^d \frac{\del g_k}{\del z_i}\frac{\del g_\ell}{\del z_i}.$$
That is, $B =G G^T$.
We then set $A(x) = B\pr{h(x)} = B(z)$ so that $A = H^{-1}\pr{H^{-1}}^T$.
Then
\begin{equation*}
    \begin{aligned}
    \frac{\del v_n}{\del y_{i,j}}
    &= \innp{\gr_x u, \frac{\del g}{\del z_i}}  + \frac{\del u}{\del t} \frac{y_{i, j}}{d} \\
    y \cdot \gr_y v_n
    &= \innp{\gr_x u, G(z) z} + 2 t \frac{\del u}{\del t}
    = \innp{A(x)\gr_x u, {H(x)^T h(x)}} + 2 t \frac{\del u}{\del t} \\
    \abs{\gr_y v_n}^2 
    &= n \abs{G(z)^T \gr_x u}^2 
    + \frac 2 d \frac{\del u}{\del t} \brac{\innp{\gr_x u,  G(z) \, z}
    + t \frac{\del u}{\del t} } \\
    &= n \innp{A(x) \gr_x u, \gr_x u} 
    + \frac 2 d \frac{\del u}{\del t} \brac{\innp{A(x)\gr_x u, H(x)^T h(x)}
    + t \frac{\del u}{\del t} }.
    \end{aligned}
\end{equation*}
Moreover, with $\kappa_n$ as in \eqref{kappaDef},
\begin{align}
    & \frac{\di_y \pr{ \kappa_n(y)\gr_y v_n} }{\kappa_n(y)} 
    = n \brac{\di_x \pr{A \gr_x u} +  \frac{\del u}{\del t}}
+ \frac 2 {d}\brac{ \innp{A\gr_x \frac{\del u}{\del t}, H^T h}  
+ \frac{\del u}{\del t} \frac{\tr\pr{H \innp{\gr_z G(h), h}}}2
+ t \frac{\del^2 u}{\del t^2} } ,
\label{chain}
\end{align}
where the expression on the right depends on $x$ and $t$.
\end{lem}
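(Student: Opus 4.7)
The plan is to derive each identity by direct application of the chain rule to $v_n(y) = u\pr{g(f_{d,n}(y)), \abs{y}^2/(2d)}$, using that $\partial z_k/\partial y_{i,j} = \delta_{k,i}$, $\partial x_k/\partial y_{i,j} = \partial g_k/\partial z_i$, and $\partial t/\partial y_{i,j} = y_{i,j}/d$. The first formula follows immediately. Multiplying by $y_{i,j}$ and summing collapses the spatial part to $\sum_i z_i \innp{\gr_x u, \partial g/\partial z_i} = \innp{\gr_x u, G(z)z}$ via the column expansion of $Gz$, and converts the temporal part into $2t\,\partial_t u$ using $\abs{y}^2 = 2dt$. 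For $\abs{\gr_y v_n}^2$, squaring and summing produces three pieces: the pure spatial piece acquires a factor $n$ giving $n\abs{G^T\gr_x u}^2 = n\innp{A\gr_x u,\gr_x u}$; the cross term uses $\sum_j y_{i,j} = z_i$; and the pure time piece uses $\sum_{i,j} y_{i,j}^2 = 2dt$. The two alternative $A,H,h$ forms follow from the single identity $G(z)z = A(x)H(x)^T h(x)$, which is immediate from $A = GG^T$, $HG = I$, and $h(x) = z$.

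The divergence identity is the delicate step. Writing $\di_y\pr{\kappa_n \gr_y v_n}/\kappa_n = \Delta_y v_n + \gr_y \log \kappa_n \cdot \gr_y v_n$, Lemma \ref{ChainRuleLem1} supplies the second term: it contributes $n\sum_i \tr\pr{H\,\partial G/\partial z_i}\innp{\gr_x u, \partial g/\partial z_i}$ from the spatial part of $\partial v_n/\partial y_{i,j}$ and $\frac{1}{d} \partial_t u \cdot \tr\pr{H\innp{\gr_z G, z}}$ from the temporal part. For $\Delta_y v_n$, I would differentiate $\partial v_n/\partial y_{i,j}$ once more, chain-ruling through both the $x$- and $t$-dependence of $u$, and then sum. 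After using $\sum_j 1 = n$, $\sum_j y_{i,j} = z_i$, and $\sum_{i,j} y_{i,j}^2 = 2dt$, the result is the leading order-$n$ piece
\[
n\brac{\sum_{k,\ell} B_{k\ell}\frac{\partial^2 u}{\partial x_k \partial x_\ell} + \sum_k (\Delta_z g_k)\frac{\partial u}{\partial x_k} + \frac{\partial u}{\partial t}}
\]
plus a lower-order remainder $(2/d)\innp{\gr_x \partial_t u, G z} + (2t/d)\partial_t^2 u$.

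The main obstacle --- the algebraic core of the proof --- is verifying
\[
\sum_k (\Delta_z g_k)\frac{\partial u}{\partial x_k} + \sum_i \tr\pr{H\frac{\partial G}{\partial z_i}}\innp{\gr_x u, \frac{\partial g}{\partial z_i}} = \sum_{k,\ell}\frac{\partial A_{k\ell}}{\partial x_k}\frac{\partial u}{\partial x_\ell},
\]
which combines with $B_{k\ell} = A_{k\ell}$ (at $z=h(x)$) to consolidate the order-$n$ terms into $n\brac{\di_x\pr{A\gr_x u}+\partial_t u}$. I would expand $\partial_{x_k}A_{k\ell} = \sum_m \partial_{x_k}(G_{km}G_{\ell m})$ and compute $\partial_{x_k} G_{\ell m} = \sum_p (\partial^2 g_\ell/\partial z_m\partial z_p)H_{p,k}$ by the chain rule, after which one summation collapses via $HG = I$ to produce $\Delta_z g_\ell$ from one piece while the other rearranges to $\sum_i \tr\pr{H\partial G/\partial z_i}(\partial g_\ell/\partial z_i)$. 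Reassembling all pieces and rewriting the lower-order remainder in the $A,H,h$ form (via the same $Gz = AH^T h$ identity already used) yields \eqref{chain}.
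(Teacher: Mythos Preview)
Your proposal is correct and follows essentially the same route as the paper. The only organizational difference is that you split $\kappa_n^{-1}\di_y(\kappa_n\gr_y v_n) = \Delta_y v_n + \gr_y\log\kappa_n\cdot\gr_y v_n$ and invoke Lemma~\ref{ChainRuleLem1} for the second summand, whereas the paper differentiates the product $\kappa_n\,\partial v_n/\partial y_{i,j}$ directly and divides by $\kappa_n$ afterward; both paths produce the same intermediate terms and rely on the identical key identity (your displayed equation, the paper's \eqref{1stOrderTerm} combined with \eqref{logExpression}) to consolidate the first-order terms into $\sum_{k}\partial_{x_k}A_{k\ell}$.
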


The proof of this result relies on the chain rule and can be found in Appendix \ref{p:ChainLemProof}.
When $u$ is a solution to a variable-coefficient backwards heat equation, we immediately reach the following consequence.

\begin{cor}[Chain Rule Corollary]
\label{ChainRuleCor}
If $u : \R^d \times \R_{+} \to \R$ is a solution to $\di_x \pr{A \gr_x u} + \del_t u = 0$ and we define $v_n: \R^{d \times n} \to \R$ to satisfy 
$$v_n(y) = u\pr{F_{d,n}(y)},$$ 
then
\begin{equation*}
\di_y \pr{ \kappa_n(y)\gr_y v_n}
= \kappa_n(y) \ell_n(y),
\end{equation*}
where
$$\ell_n = \frac 2 d \brac{\innp{A\gr_x \frac{\del u}{\del t}, H^T h}  
+ \frac 1 2 \frac{\del u}{\del t} \tr\pr{H \innp{\gr_z G(h), h}}
- t \di_x \pr{A \gr_x \frac{\del u}{\del t} }}.$$
\end{cor}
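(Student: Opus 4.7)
The plan is to deduce the corollary directly from equation \eqref{chain} in Lemma \ref{ChainRuleLem}, using the PDE satisfied by $u$ in two places: once at face value, and once after differentiating it in time.

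First, I would invoke Lemma \ref{ChainRuleLem} and write
\begin{align*}
\frac{\di_y\pr{\kappa_n(y)\gr_y v_n}}{\kappa_n(y)}
&= n\brac{\di_x\pr{A\gr_x u} + \frac{\del u}{\del t}} \\
&\quad + \frac{2}{d}\brac{\innp{A\gr_x\frac{\del u}{\del t}, H^T h} + \frac{\del u}{\del t}\frac{\tr\pr{H\innp{\gr_z G(h),h}}}{2} + t\frac{\del^2 u}{\del t^2}}.
\end{align*}
The assumption $\di_x\pr{A\gr_x u} + \del_t u = 0$ immediately kills the $n$-scaled bracket, so the entire left-hand side reduces to the remaining $\frac{2}{d}$-bracket.

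Second, I would handle the $t\,\del_t^2 u$ term, which is the only piece that does not already appear in the claimed expression for $\ell_n$. Since $A = A(x)$ is time-independent and $u$ satisfies $\di_x\pr{A\gr_x u} = -\del_t u$, differentiating this identity in $t$ (assuming the regularity needed for the chain-rule computations of Lemma \ref{ChainRuleLem} to hold, so that $\del_t u$ is itself differentiable in both $x$ and $t$) gives $\del_t^2 u = -\di_x\pr{A\gr_x \del_t u}$. Substituting this into the surviving bracket yields exactly
\[
\ell_n = \frac{2}{d}\brac{\innp{A\gr_x\frac{\del u}{\del t}, H^T h} + \frac{1}{2}\frac{\del u}{\del t}\,\tr\pr{H\innp{\gr_z G(h),h}} - t\,\di_x\pr{A\gr_x\frac{\del u}{\del t}}},
\]
which is the claimed formula, and the identity $\di_y\pr{\kappa_n\gr_y v_n} = \kappa_n\,\ell_n$ follows.

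There is no genuine obstacle here beyond Lemma \ref{ChainRuleLem}; the only mild subtlety is ensuring that $u$ is regular enough to permit differentiating the equation in $t$. This can be addressed either by assuming $u$ is classical (consistent with the regularity remarks preceding Lemma \ref{ChainRuleLem1} concerning $g$), or by interpreting all identities in the weak sense, which is sufficient for the applications in later sections.
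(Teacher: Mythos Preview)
Your proposal is correct and follows exactly the approach the paper intends: the corollary is stated as an immediate consequence of Lemma \ref{ChainRuleLem}, and your two steps---using the equation to eliminate the $n$-scaled bracket, then differentiating the equation in $t$ (using that $A$ is time-independent) to rewrite $t\,\del_t^2 u$ as $-t\,\di_x\pr{A\gr_x \del_t u}$---are precisely what is needed.
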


In summary, we have constructed a sequence of maps $F_{d,n}$, given in \eqref{FdnDef}, that serve as the connection between the elliptic and parabolic settings.
The following table describes these relationships and the notation that we use to describe our elliptic and parabolic settings.
Note that $F_{d,n}$ is the connection between the elliptic column and the parabolic column.
$$
\begin{array}{l|ll}
    & \textrm{Elliptic} & \textrm{Parabolic} \\
    \hline \hline 
    \textrm{Space} & \textrm{high-dimensional space} & \textrm{space-time} \\
    \textrm{Elements} & y \in \R^{d \times n} & \pr{x, t} \in \R^d \times \R_+ \\
    \textrm{Functions} & v_n = v_n(y) & u = u\pr{x,t} \\
    \textrm{PDEs} & \di_y \pr{\kappa_n \gr_y v_n} = \kappa_n \, \ell_n & \di_x \pr{A \gr_x u} + \del_t u = 0 \\
\end{array}
$$
Going forward, we write $\nabla u$ to indicate $\nabla_x u$, unless explicitly indicated otherwise.
Similarly, we write $\nabla v_n$ to indicate $\nabla_y v_n$, unless explicitly indicated otherwise.
That is, $u$ is understood to be a function of $x$ and $t$, while $v_n$ is a function of $y$, and all derivatives are interpreted appropriately.

The results above will be used extensively when we prove our parabolic theorems.
Therefore, we work with variable-coefficient operators for which the coefficient matrix has the specific structure that is described by the previous two results.
That is, we consider $A(x) = B(z)$ where $B = G G^T$ and $G$ is the Jacobian of some invertible map $g : \R^d \to \R^d$.
Clearly, there are bounded, elliptic coefficient matrices $A$ whose structure is not of this form.
However, if $A = I$, then $A$ is associated to the identity map $g(z) = z$ with Jacobian $G = I$, showing that this structural condition on $A$ is a reasonable generalization to constant-coefficient operators.

Before concluding this section, we examine some examples of non-trivial coefficient matrices that satisfy our structural condition.
For simplicity, we assume that $d=1$. 

Given a bounded, elliptic function $\la \le C(x)\le \La$, we construct a function $g$ such that the corresponding function $A(x)$ coincides with $C(x)$.
Recall that
\[
A(x)=G(h(x))G^T(h(x))=\brac{g'(h(x))}^2.
\]
For $C(x)=A(x)$ to hold, we need $\sqrt{C(x)}=g'(h(x))$. 
Since $g = h^{-1}$ and $h = g^{-1}$, then $\disp g'(h(x)) = \frac 1 {h'(x)}$, so we need to solve $h'(x)=\frac{1}{\sqrt{C(x)}}$.

\begin{ex}
Let $C(x)=\left(\frac{1+x^2}{2+x^2}\right)^2$.
Notice that $\frac{1}{4}\le C(x)\le 1$ and $\disp \frac 1 {\sqrt{C(x)}} = \frac{2+x^2}{1+x^2}$ so that 
\[
\int \disp \frac {dx} {\sqrt{C(x)}}
=\int\frac{2+x^2}{1+x^2}dx
=x+\arctan(x)+c.
\]
If we want $h(0) = 0$, then define $h(x) = x + \arctan(x)$ and take $g(z)=h^{-1}(z)$. 
\end{ex}

\begin{ex}
Let $C(x)=2+\sin(x)$ and notice that $1\le C(x)\le 3.$ 
We define
\[
h(x)
=\int \frac{1}{\sqrt{2+\sin(x)}}dx
=\frac{-2}{\sqrt{3}} {F\left(\frac{\pi-2x}{4}\Big|\frac{2}{3}\right)},
\]
where $F(x | m)$ is the elliptic integral of the first kind with parameter $m=k^2$. 
Again, we define $g(z)=h^{-1}(z)$.    
\end{ex}

\section{Integral relationships}
\label{S:measure}

In what follows, we examine how integrals of functions $\phi(x,t)$ defined in space-time $\R^d \times \R_+$ can be related to integrals of $\phi(F_{d,n}(y))$ in high-dimensional space $\R^{d \times n}$.
We start from a probabilistic viewpoint.
That is if $y \in \R^{d \times n}$ is uniformly distributed over a fixed sphere, we seek to determine the probability distribution of $x \in \R^d$, where $(x,t) = F_{d,n}(y)$.
As we show below, understanding this probability distribution on the parabolic side reduces to a pushforward computation which we carry out explicitly.
We then collect the consequences that will be used in our elliptic-to-parabolic proofs.

Let $S_t^n$ denote the sphere of radius $\sqrt{2dt}$ in $\R^{d\times n}$.
That is,
\begin{equation}
\label{StnDefn}
S_t^n=\set{y\in\R^{d\times n} \ : \ |y|^2 = 2dt}.
\end{equation}
Let $\sigma_{d n-1}^{t}$ denote the canonical surface measure on this sphere $S_t^n$.
We make the assumption that the vectors $y = (y_{1,1},y_{1,2},\ldots,y_{d,1},\ldots,y_{d,n}) \in \R^{d \times n}$ are uniformly distributed over $S_t^n$ with respect to $\sigma_{d n-1}^{t}$.
Set $\mu_{d,n}^{t}$ to be the normalized surface measure on the sphere $S^n_t$; that is,
\[
\mu_{d, n}^{t}=\frac{1}{|S^{d n-1}|(2dt)^{\frac{d n-1}{2}}} \sigma_{d n-1}^{t}.
\]
Our goal is to find the probability distribution of $x$ in the limit as $n\rightarrow\infty$, where $x=g(z) = g(f_{d,n}(y))$ and $z=f_{d,n}(y)$ is as in \eqref{fdnDef}. 

We first define an intermediate measure on $z \in \R^d$ for each fixed $t$ via the  pushforward as $\omega_{d,n}^t = f_{d, n} \# \mu_{d, n}^t$.
This is the pushforward from \cite{Dav18} and it is shown there that
\begin{align*}
d\omega_{d,n}^t 
&=  \frac{\abs{S^{d n-1 - d}} }{\abs{S^{d n-1}} \pr{2 d n t}^{\frac{d}{2}}} \pr{1 - \frac{z_1^2 + \ldots + z_{d}^2}{2 d n t}}^{\frac{d n - d - 2}{2}} \chi_{B_{nt}}(z) \, dz_1 \ldots dz_{d} \\
&=  \frac{\abs{S^{d n -1 - d}} }{\abs{S^{d n-1}} \pr{2d n t}^{\frac{d}{2}}} \pr{1 - \frac{\abs{z}^2}{2 d n t}}^{\frac{d n - d - 2}{2}} \chi_{B_{nt}}(z) \, dz,
\end{align*}
where we have introduced the notation
\begin{equation}
\label{BntDefn}
B_{nt}=\{z\in\R^d \ : \ |z|^2 < 2dnt\}
\end{equation}
for the ball of radius $\sqrt{2dnt}$ in $\R^d$.

The probability distribution of $x$ is the push-forward of $\mu_{d,n}^{t}$ by $g \circ f_{d, n}$:
 \[
 \nu_{d,n}^{t}:=(g\circ f_{d,n})\#(\mu_{d,n}^{t}).
 \]
 This implies, in particular, that for all $\varphi\in C_0(\R^d)$,
 \[
 \int_{\R^d}\varphi(x) \, d\nu_{d,n}^{t}(x)=\int_{S_t^n}\varphi(g(f_{d,n}(y))) \, d\mu_{d,n}^{t}(y).
 \]
Now observe that 
$$\nu_{d,n}^t = \pr{g \circ f_{d,n}} \# \mu_{d,n}^t =  g \# \pr{f_{d,n} \# \mu_{d,n}^t} = g \# \omega_{d,n}^t.$$
Since $g : \R^d \to \R^d$ is invertible, computing the pushforward here is the same as carrying out a change of variables.
By definition, we have that
\begin{align*}
\frac{\abs{S^{d n-1 - d}} }{\abs{S^{d n-1}} \pr{2 d n t}^{\frac{d}{2}}} \int_{\R^d} \vp\pr{g(z)}  \pr{1 - \frac{\abs{z}^2}{2 d n t}}^{\frac{d n - d - 2}{2}} \chi_{B_{nt}}(z) \, dz 
&= \int_{\R^d} \vp\pr{g(z)} d \om_{d, n}^t
= \int_{\R^d} \vp\pr{x} d\nu_{d, n}^t.
\end{align*}
Since $dx = \ga(z) \, dz$, $\eta(g(z)) \ga(z) =1$, and $z = h(g(z))$, then
\begin{align*}
\int_{\R^d} \vp\pr{g(z)} \pr{1 - \frac{\abs{z}^2}{2 d n t}}^{\frac{d n - d - 2}{2}} \chi_{B_{nt}}(z) \, dz
&= \int_{\R^d} \vp\pr{g\pr{z}} \pr{1 - \frac{\abs{z}^2}{2 d n t}}^{\frac{d n - d - 2}{2}} \eta\pr{g\pr{z}} \chi_{B_{nt}(z)}\, \ga(z)  \, dz \\
&= \int_{\R^d} \vp\pr{x} \eta\pr{x} \pr{1 - \frac{\abs{h\pr{x}}^2}{2 d n t}}^{\frac{d n - d - 2}{2}} \chi_{B_{nt}}(h(x))\, dx.
\end{align*}
It follows that $\disp \nu_{d,n}^t = \eta(x) \, K_{t,n}(h(x)) \, dx$, where we introduce
\begin{equation}
\label{Ktn}
K_{t,n}(x)
= K_{n}(x, t)
=\frac{\abs{S^{d n -1 - d}} }{\abs{S^{d n -1}} \pr{2 d n t}^{\frac{d}{2}}}  \pr{1 - \frac{\abs{x}^2}{2 d n t}}^{\frac{d n - d - 2}{2}}\chi_{B_{nt}}(x).
\end{equation}
As shown in \cite[Lemma 6]{Dav18},
\begin{align}
\label{KtnDefn}
    K_t(x) = K(x,t) := \lim_{n \to \iny} K_{t,n}(x)
    &= \pr{\frac 1 {4 \pi t}}^{\frac d 2}  \exp\pr{-\frac{\abs{x}^2}{4t}},
\end{align}
where the limit is pointwise. 
A much stronger version of convergence holds for this sequence, stated as follows.

\begin{lem}[Uniform convergence of $\set{K_n}$]
\label{L:uniform} 
Given any $t_0 >0$, the sequence $\set{K_n(x,t)}_{n=1}^\iny$ converges uniformly to $K(x,t)$ in $\R^d\times\{t\ge t_0\}$.
\end{lem}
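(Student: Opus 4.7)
The plan is to factor out the $t$-scaling and the normalizing constant, reducing the uniform-on-$\R^d \times \{t \ge t_0\}$ statement to a uniform-on-$[0,\infty)$ statement for a single-variable function. I would write
\[
K_n(x,t) = c_n \, t^{-d/2}\, \psi_n\!\pr{\frac{|x|}{\sqrt{t}}}, \qquad c_n = \frac{|S^{dn-1-d}|}{|S^{dn-1}|(2dn)^{d/2}}, \quad \psi_n(r) = \pr{1 - \frac{r^2}{2dn}}_+^{(dn-d-2)/2},
\]
and similarly $K(x,t) = c \, t^{-d/2}\, \psi(|x|/\sqrt{t})$ with $c = (4\pi)^{-d/2}$ and $\psi(r) = e^{-r^2/4}$. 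Since $t^{-d/2} \le t_0^{-d/2}$ on $\{t \ge t_0\}$ and $\psi \le 1$, a triangle inequality reduces the claim to showing $c_n \to c$ and $\psi_n \to \psi$ uniformly on $[0,\infty)$.

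The convergence $c_n \to c$ is a routine computation using $|S^m| = 2\pi^{(m+1)/2}/\Gamma((m+1)/2)$ together with the classical asymptotic $\Gamma(N+a)/\Gamma(N) \sim N^a$ as $N \to \infty$, applied to $N = (dn-d)/2$ and $a = d/2$; this also yields $\sup_n c_n < \infty$, which is needed in the triangle inequality.

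The main step is the uniform convergence of $\psi_n$ to $\psi$ on $[0,\infty)$, which I would establish by splitting at a threshold $R>0$ to be chosen. On $[0,R]$, for $n$ large enough that $R^2/(2dn) < 1/2$, the Taylor expansion $\log(1-s) = -s - s^2/2 + O(s^3)$ applied to $\log\psi_n(r) = \tfrac{dn-d-2}{2}\log(1 - r^2/(2dn))$ yields $\log\psi_n(r) = -r^2/4 + O((R^2+R^4)/n)$ uniformly in $r \in [0,R]$, so $|\psi_n - \psi| = O(1/n)$ uniformly on $[0,R]$. On $(R,\infty)$, the elementary bound $1 - s \le e^{-s}$ gives $\psi_n(r) \le \exp(-r^2(dn-d-2)/(4dn)) \le e^{-r^2/8}$ for all $n \ge d+2$, and combined with $\psi(r) \le e^{-r^2/4}$ this bounds both functions by $e^{-R^2/8}$ throughout the tail. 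Given $\varepsilon > 0$, I would first choose $R$ so that $2e^{-R^2/8} < \varepsilon/2$, and then $N$ so that both $|c_n - c| < \varepsilon$ and $\sup_{[0,R]}|\psi_n - \psi| < \varepsilon/2$ hold for $n \ge N$; assembling these estimates yields the uniform bound.

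The main obstacle is the behavior of $\psi_n$ near the edge of its support $r = \sqrt{2dn}$, where the Taylor argument breaks down because $r^2/(2dn) \to 1$. The role of the inequality $1 - s \le e^{-s}$ on the tail is precisely to show that this transition happens in a regime where both $\psi_n(r)$ and $\psi(r)$ are already exponentially small in $R$, so the difference $|\psi_n - \psi|$ is uniformly negligible there. This is what allows the splitting argument to close and, combined with the $t$-scaling reduction of the first paragraph, yields the desired uniform convergence on $\R^d \times \{t \ge t_0\}$.
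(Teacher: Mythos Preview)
Your proof is correct and follows the same overall architecture as the paper's: separate the normalizing constant from the profile, show the constants converge via Gamma-function asymptotics, and reduce the remaining problem to uniform convergence of a single-variable function on $[0,\infty)$. The execution of that last step, however, is organized differently. The paper introduces the intermediate comparison function $(1-z/m)^m$ and decomposes
\[
\abs{(1-z/m)^{m-\delta} - e^{-z}} \le \abs{(1-z/m)^{m-\delta} - (1-z/m)^m} + \abs{(1-z/m)^m - e^{-z}},
\]
bounding the first piece by a calculus optimization (maximum is $\le \delta/m$) and the second by a split at the \emph{moving} threshold $\alpha = \log m$, which yields the explicit rate $O((\log m)^2/m)$. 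You instead split the domain at a \emph{fixed} threshold $R$ chosen in terms of $\varepsilon$, handle $[0,R]$ by a direct Taylor expansion of $\log\psi_n$, and dispose of the tail $(R,\infty)$ via $1-s \le e^{-s}$. Your route is somewhat more streamlined (no intermediate function, no optimization), while the paper's gives a sharper quantitative rate. One small slip: your threshold $n \ge d+2$ does not quite guarantee $(dn-d-2)/(4dn) \ge 1/8$ when $d=1$ (you need $dn \ge 2d+4$, hence $n \ge 6$ there), but this is immaterial to the argument.
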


The proof of this result can be found in Appendix \ref{p:uniform}.
In fact, the arguments there, in combination the proof of \cite[Lemma 1]{Dav18}, show that there exists $\mathcal{C}_d$ so that for every $n \in \N$ and every $\pr{x,t} \in \R^d \times \R_+$,
\begin{equation}
\label{KntKtBound}
K_{t,n}(x) \le \mathcal{C}_d K_{t}(x).
\end{equation}
This bound will be used in many of our subsequent arguments.

One might wonder how our transformed heat kernel relates to the standard one. 
The following lemma, proved in Appendix \ref{p:uKEqn}, shows that $K(h(x), t)$ is not a solution to the associated homogeneous variable-coefficient heat equation.

\begin{lem}[Kernel solution]
\label{kernelEquation}
With $u_K(x,t) = K\pr{h(x), t}$, it holds that
\begin{equation*}
    \di \pr{A \gr u_K} - \del_t u_K = - \frac{1}{2t} \sum_{ j, k, \ell=1}^d \frac{\del^2 g_k}{\del {z_j} \del z_\ell }(h(x)) \frac{\del h_\ell}{\del x_k} h_j\, u_K \ne 0.
\end{equation*}
\end{lem}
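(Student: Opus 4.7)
The plan is to carry out a direct chain-rule computation, exploiting two ingredients: the identity $HG = I$ (which will collapse the ``second-order in $K$'' piece into the flat Laplacian $\Delta_z K$), and the fact that $K(z,t)$ is itself a solution of the standard heat equation $\partial_t K = \Delta_z K$.

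First I would compute the gradient and then the $A$-weighted gradient of $u_K$. By the chain rule, $(\nabla_x u_K)_i = \sum_\ell \frac{\del h_\ell}{\del x_i} \frac{\del K}{\del z_\ell}(h(x),t)$, i.e., $\nabla_x u_K = H^T \, \nabla_z K(h(x),t)$. Since $A = H^{-1}(H^{-1})^T$, this gives
\begin{equation*}
A\,\nabla_x u_K = H^{-1}\,\nabla_z K(h(x),t) = G(h(x))\,\nabla_z K(h(x),t),
\end{equation*}
so the $k$-th component is $\sum_\ell \frac{\del g_k}{\del z_\ell}(h(x))\, \frac{\del K}{\del z_\ell}(h(x),t)$.

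Next I would take $\mathrm{div}_x$ and split the result into two pieces via the product rule. The ``first-order'' piece comes from differentiating $G(h(x))$:
\begin{equation*}
T_1 \;=\; \sum_{j,k,\ell} \frac{\del^2 g_k}{\del z_j\, \del z_\ell}(h(x))\, \frac{\del h_j}{\del x_k}(x)\, \frac{\del K}{\del z_\ell}(h(x),t),
\end{equation*}
and the ``second-order'' piece comes from differentiating $\nabla_z K(h(x),t)$:
\begin{equation*}
T_2 \;=\; \sum_{j,k,\ell} \frac{\del g_k}{\del z_\ell}(h(x))\, \frac{\del h_j}{\del x_k}(x)\, \frac{\del^2 K}{\del z_j\, \del z_\ell}(h(x),t).
\end{equation*}
In $T_2$, use $\sum_k \frac{\del h_j}{\del x_k}\, \frac{\del g_k}{\del z_\ell} = (HG)_{j\ell} = \delta_{j\ell}$, which immediately yields $T_2 = \Delta_z K(h(x),t)$.

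Since $K(z,t)$ solves the classical heat equation, $\partial_t u_K(x,t) = \partial_t K(h(x),t) = \Delta_z K(h(x),t) = T_2$. Thus $T_2$ cancels $\partial_t u_K$ when we form $\mathrm{div}(A\nabla u_K) - \partial_t u_K$, and only $T_1$ remains. To finish, I would substitute the Gaussian identity $\frac{\del K}{\del z_\ell}(h(x),t) = -\frac{h_\ell(x)}{2t}\, u_K$ into $T_1$, and then relabel the dummy indices $j \leftrightarrow \ell$ using the symmetry $\frac{\del^2 g_k}{\del z_j \del z_\ell} = \frac{\del^2 g_k}{\del z_\ell \del z_j}$, producing exactly the right-hand side of the stated formula. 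There is no real obstacle here beyond careful bookkeeping of the chain rule; the only step where the computation could be derailed is confusing which Jacobian is evaluated at $x$ versus $z = h(x)$, so I would be explicit about that throughout.
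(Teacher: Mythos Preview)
Your proposal is correct and follows essentially the same direct chain-rule computation as the paper, relying on the same key identity $HG = I$ to collapse the second-order term. Your organization is slightly cleaner: by keeping $\nabla_z K$ abstract and invoking the standard heat equation $\partial_t K = \Delta_z K$ to cancel $T_2$ with $\partial_t u_K$, you avoid the paper's step of computing $\partial_t u_K$ explicitly and then recognizing it inside the expanded divergence.
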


Summarizing our pushforward computations, we have established the following result:

\begin{lem}[cf. Lemma 2 and Lemma 3 in \cite{Dav18}]
\label{PFTS}
Let $S_{t}^n$, $K_{t,n}$ and $K_t$ be as given in \eqref{StnDefn}, \eqref{Ktn} and \eqref{KtnDefn}, respectively.
If $\vp: \R^d \to \R$ is integrable with respect to $K_{t}\pr{h(x)} d{x}$, then for every $n \in \N$, $\vp$ is integrable with respect to $K_{t,n}\pr{h(x)} d{x}$ and it holds that
\begin{align*}
& \fint_{S^n_t} \kappa_n(y)\vp\pr{g \circ f_{d,n}\pr{y}} d \si_{d n -1}^t
= \int_{S_t^n} \kappa_n(y) \vp\pr{g \circ f_{d,n}\pr{y}}  d\mu_{n,d}^t
= \int_{\R^d} \vp\pr{x} \, K_{t,n}\pr{h(x)} \, d{x},
\end{align*}
where $\kappa_n$ is as defined in \eqref{kappaDef}.
\end{lem}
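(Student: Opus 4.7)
The plan is to package the pushforward computations already carried out in the paragraphs preceding the lemma into the claimed identity, with the single essential idea being to use \eqref{kappaDef}, namely $\kappa_n(y) = 1/\eta(g(f_{d,n}(y)))$, to cancel the Jacobian determinant $\eta(x)$ that appears in $d\nu_{d,n}^t = \eta(x)\, K_{t,n}(h(x))\,dx$. The first equality in the displayed formula is immediate from the definition of $\mu_{d,n}^t$ as the normalization of $\sigma_{dn-1}^t$ on $S_t^n$, so it is the second equality that carries the substantive content.

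First, I would assume $\varphi \in C_0(\R^d)$ and set $\tilde\varphi(x) := \varphi(x)/\eta(x)$, which is well-defined and continuous since $\eta = \det H$ is continuous and nowhere zero by the invertibility of $g$. By \eqref{kappaDef},
$$\kappa_n(y)\, \varphi(g \circ f_{d,n}(y)) = \tilde\varphi(g \circ f_{d,n}(y)),$$
so the definition of the pushforward $\nu_{d,n}^t = (g \circ f_{d,n})\#\mu_{d,n}^t$ combined with the formula $d\nu_{d,n}^t = \eta(x) K_{t,n}(h(x))\,dx$ established immediately before the lemma gives
$$\int_{S_t^n} \kappa_n(y)\, \varphi(g \circ f_{d,n}(y))\, d\mu_{d,n}^t(y) = \int_{\R^d} \tilde\varphi(x)\, d\nu_{d,n}^t(x) = \int_{\R^d} \frac{\varphi(x)}{\eta(x)}\, \eta(x)\, K_{t,n}(h(x))\, dx,$$
and the $\eta$ factors cancel to produce the claimed right-hand side.

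Second, to upgrade the identity from $C_0(\R^d)$ to all $\varphi$ integrable against $K_t(h(x))\,dx$, I would invoke the pointwise bound \eqref{KntKtBound}, which yields $K_{t,n}(h(x)) \le \mathcal{C}_d K_t(h(x))$ for every $n$. This immediately gives the integrability assertion and, simultaneously, a uniform-in-$n$ integrable majorant for a density argument: approximate $\varphi$ by continuous compactly supported functions in $L^1(K_t(h(x))\,dx)$ and pass to the limit on both sides of the $C_0$-identity by dominated convergence. The main (and essentially only) obstacle is to ensure that the limiting passage on the elliptic (left-hand) side is also controlled, which follows because $\kappa_n$ is bounded on the compact sphere $S_t^n$ and $\mu_{d,n}^t$ is a probability measure, so the left-hand integrand inherits domination from that on the right through the very pushforward identity being extended. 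With both sides dominated, dominated convergence completes the argument.
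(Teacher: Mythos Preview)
Your proposal is correct and follows essentially the same route as the paper: the paper presents Lemma~\ref{PFTS} as a direct summary of the pushforward computations in the preceding paragraphs, and your argument simply makes the cancellation $\kappa_n \cdot (\eta \circ g \circ f_{d,n}) = 1$ explicit and then invokes \eqref{KntKtBound} for the integrability claim. One minor simplification: the density argument in your second step is unnecessary, since once the identity $d\nu_{d,n}^t = \eta(x)\,K_{t,n}(h(x))\,dx$ has been established via $C_0$ test functions, the pushforward relation $\int_{S_t^n} \psi(g\circ f_{d,n})\,d\mu_{d,n}^t = \int_{\R^d}\psi\,d\nu_{d,n}^t$ holds automatically for every nonnegative measurable $\psi$ and hence for every $\psi \in L^1(d\nu_{d,n}^t)$; applying this directly to $\psi = \varphi/\eta$ avoids the approximation step entirely.
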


\begin{rem}
Going forward, we often use the notation $d \si$ or $d \si(y)$ in place of $d \si_{d n -1}^t$.
\end{rem}

It is also helpful to interpret the measures $\nu_{d,n}^t$ as time slices of a space-time object, which comes from the projection of some global measure in the space $y\in\R^{d\times n}$ onto the space-time. 
To do so, we project a measure $\mu_{d,n}$ on $\R^{d\times n}$ to the space-time $\R^d\times \R_+$ by the map $F_{d,n}$ as given in \eqref{FdnDef}. 
With $m$ denoting Lebesgue measure, define
\[
\mu_{d,n}=\frac{d}{|S^{d n-1}||y|^{d n -2}}m.
\]
The ``slices'' of the measure $\mu_{d,n}$ by the spheres $S^t_n$ project by $F_{d,n}$ onto the measures $\nu_{d,n}^t$; that is,
\begin{equation*}
F_{d,n}\# \mu_{d,n}=\int_0^{\infty}(g\circ f_{d,n})\#(\mu_{d,n}^t)dt=\int_0^{\infty}\nu_{d,n}^tdt.
\end{equation*}
Let $B_t^n$ denote the open ball in $\R^{d \times n}$ for which $\del B_n^t = S_n^t$, i.e. 
\begin{equation}
\label{BtnDefn}
B_t^n=\set{y\in\R^{d\times n} \ : \ |y|^2 < 2dt}.
\end{equation}
Integrating the equality from Lemma \ref{PFTS} leads to the following result.

\begin{lem}[cf. Lemma 4 in \cite{Dav18}]
\label{PFT}
Let $B_t^n$, $K_{n}$ and $K$ be as given in \eqref{BtnDefn}, \eqref{Ktn} and \eqref{KtnDefn}, respectively.
If $\phi: \R^d \times \pr{0, T} \to \R$ is integrable with respect to $K\pr{h(x)} d{x} \, dt$, then for every $n \in \N$, $\phi$ is integrable with respect to $K_{n}\pr{h(x)} d{x} \, dt$ and for every $t \in \pr{0, T}$, it holds that
\begin{align*}
& \frac{1}{d \abs{S^{d n -1}}} \int_{B^n_t} \kappa_n(y) \phi\pr{F_{d,n}\pr{y}} \abs{y}^{2 - d n}  d{y} 
=  \int_0^t \int_{\R^d} \phi\pr{x,s} K_n\pr{h(x),s} d{x} \, d{s},
\end{align*}
where $\kappa_n$ is as defined in \eqref{kappaDef}.
In fact, if $T = \iny$, then
\begin{align*}
&\frac{1}{d \abs{S^{d n -1}}} \int_{\R^{d \times n}} \kappa_n(y) \phi\pr{F_{d,n}\pr{y}} \abs{y}^{2 - d n } d{y}
= \int_0^\iny \int_{\R^d} \phi\pr{x,s} \, K_n\pr{h(x),s} \, d{x} \, d{s}.
\end{align*}
\end{lem}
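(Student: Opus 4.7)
The plan is to reduce Lemma \ref{PFT} to Lemma \ref{PFTS} via spherical coordinates in $\R^{d\times n}$, foliating $B_t^n$ by the spheres $\set{S_s^n : 0 < s < t}$. Writing $y = r\omega$ with $r = |y|$ and $\omega \in S^{dn-1}$, then substituting $r = \sqrt{2ds}$ (so that $s = |y|^2/(2d)$ is exactly the parabolic time coordinate in $F_{d,n}(y)$), I would use the radial slice identity
\[
\int_{B_t^n} f(y)\, dy = \int_0^t \pr{\int_{S_s^n} f(y) \, d\sigma_{dn-1}^s} \frac{d}{\sqrt{2ds}} \, ds.
\]
On $S_s^n$ we have $|y|^{2-dn} = (2ds)^{(2-dn)/2}$ and $F_{d,n}(y) = (g\circ f_{d,n}(y), s)$, so applying this identity with $f(y) = \kappa_n(y)\phi(F_{d,n}(y)) |y|^{2-dn}$ pulls the scalar factor $d(2ds)^{(1-dn)/2}$ outside the sphere integral.

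Next, I would invoke Lemma \ref{PFTS} with $\varphi(x) = \phi(x,s)$ on each sphere $S_s^n$ to rewrite the inner integral as
\[
\int_{S_s^n} \kappa_n(y)\phi(g\circ f_{d,n}(y), s) \, d\sigma_{dn-1}^s = |S^{dn-1}|(2ds)^{(dn-1)/2}\int_{\R^d} \phi(x,s)\, K_n(h(x),s)\, dx.
\]
The factor $(2ds)^{(1-dn)/2}$ combines with $(2ds)^{(dn-1)/2}$ to give exactly $1$, which is the whole purpose of the exponent $2-dn$ in the weight $|y|^{2-dn}$. What remains is the claimed identity
\[
\int_{B_t^n} \kappa_n(y)\phi(F_{d,n}(y)) |y|^{2-dn} \, dy = d|S^{dn-1}| \int_0^t \int_{\R^d} \phi(x,s) K_n(h(x),s)\, dx\, ds,
\]
and dividing by $d|S^{dn-1}|$ yields the first equality.

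For the integrability claim and the case $T = \infty$, I would invoke the uniform bound \eqref{KntKtBound}, namely $K_n(x,s) \le \mathcal{C}_d K(x,s)$, which immediately upgrades integrability of $\phi$ against $K(h(x))\, dx\, ds$ to integrability against $K_n(h(x))\, dx\, ds$ uniformly in $n$. For the global statement, I would take $t \to \infty$, in which case $B_t^n \uparrow \R^{d\times n}$; the right-hand side converges by dominated convergence, using $|\phi| K_n(h(\cdot)) \le \mathcal{C}_d |\phi| K(h(\cdot))$, while on the left one applies dominated convergence to the nonnegative and nonpositive parts of the integrand separately (or monotone convergence when $\phi \ge 0$). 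The only real subtlety is bookkeeping the interaction of the weight $|y|^{2-dn}$ with the radial Jacobian $\frac{d}{\sqrt{2ds}}$ and the surface area $|S^{dn-1}|(2ds)^{(dn-1)/2}$; all powers of $(2ds)$ telescope, and there is no analytic obstacle beyond this.
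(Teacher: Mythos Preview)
Your proposal is correct and follows essentially the same approach as the paper, which simply states that ``Integrating the equality from Lemma \ref{PFTS} leads to the following result.'' Your polar decomposition, the change of variables $r=\sqrt{2ds}$, the application of Lemma \ref{PFTS} on each sphere $S_s^n$, and the cancellation of the powers of $(2ds)$ are exactly what this one-line description in the paper unpacks to; the integrability claim via \eqref{KntKtBound} is likewise how the paper intends it.
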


Lemmas \ref{PFTS} and \ref{PFT} will be used many times in our proofs below.
Lemma \ref{PFTS} transforms a $\kappa_n$-weighted integral over a sphere in $\R^{d \times n}$ to a $K_{t,n}$-weighted integral at a particular slice in space-time.
Similarly, Lemma \ref{PFT} transforms a weighted integral over a ball in $\R^{d \times n}$ to a $K_{t,n}$-weighted integral in space-time.
Of note, the elliptic weight in Lemma \ref{PFT} is $\kappa_n$ times the fundamental solutions of the Laplacian, while by \eqref{KtnDefn}, the parabolic weight is an approximation to a transformed heat kernel.
As dimension-limiting arguments will be crucial to our proofs below, we need to understand what happens when $n \to \iny$.
By \eqref{KtnDefn} and \ref{KntKtBound} in combination with the Dominated Convergence Theorem, the parabolic integrals in both of these lemmas converge to $K(h(x), t)$-weighted integrals.

For the remainder of this section, we introduce some definitions and draw some conclusions based on Lemmas \ref{PFTS} and \ref{PFT}.

\begin{defn}[Weighted $L^1$ spaces]
\label{sPDefn}
Given $\phi: \R^d \times \pr{0, T} \to \R$ and $h: \R^d \to \R^d$ as in Section \ref{S:prelim}, define $\mathcal{P}\pr{\cdot; \phi, h} : \pr{0, T} \to \R$ as 
\begin{align*}
\mathcal{P}\pr{t; \phi, h} 
&= \int_{\R^d} \abs{\phi(x,t)} \exp\pr{-\frac{\abs{h(x)}^2}{4t}} dx.
\end{align*}
We say that $\mathcal{P}\pr{\cdot; \phi, h} \in L^1\pr{\brac{0, t}, s^{- \frac d 2}ds}$ if 
$$\int_0^t s^{- \frac d 2} \int_{\R^d} \abs{\phi(x,s)} \exp\pr{-\frac{\abs{h(x)}^2}{4s}} dx \, ds < \iny.$$
\end{defn}

With this definition and Lemma \ref{PFT}, we reach the following result.

\begin{cor}[Weighed integrability on the elliptic side]
\label{functionClassCor}
Let $\phi: \R^d \times \pr{0, T} \to \R$, $h: \R^d \to \R^d$ and assume that $\mathcal{P}\pr{\cdot; \phi, h} \in L^1\pr{\brac{0, t}, s^{- \frac d 2}ds}$.
For each $\psi_n: B_T^n \su \R^{d \times n} \to \R$ defined by
$\psi_n(y) = \phi\pr{F_{d,n}(y)}$, $\psi_n$ is integrable with respect to $\kappa_n(y) \, dy$ in $B_t^n$ for every $n \ge 2$.
\end{cor}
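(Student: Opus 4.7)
The plan is to apply Lemma \ref{PFT} to $|\phi|$, control the resulting parabolic side by the hypothesis via the pointwise bound \eqref{KntKtBound}, and then strip off the extra $|y|^{2-dn}$ factor using a cheap lower bound available on $B_t^n$ as soon as $n \ge 2$.

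First, I would observe that the hypothesis $\mathcal{P}(\cdot;\phi,h) \in L^1([0,t], s^{-d/2}\,ds)$ is, by Definition \ref{sPDefn} together with the explicit form of $K$ in \eqref{KtnDefn}, exactly the statement that $|\phi|$ is integrable against the weight $K(h(x),s)\,dx\,ds$ on $\R^d \times (0,t)$. This places $|\phi|$ in the class for which Lemma \ref{PFT} applies at time $t$, yielding for every $n \in \N$ the identity
\[
\frac{1}{d |S^{dn-1}|} \int_{B_t^n} \kappa_n(y)\, |\psi_n(y)|\, |y|^{2-dn}\, dy = \int_0^t \int_{\R^d} |\phi(x,s)|\, K_n(h(x),s)\, dx\, ds.
\]
The uniform domination \eqref{KntKtBound}, $K_n(h(x),s) \le \mathcal{C}_d K(h(x),s)$, then shows that the right-hand side is bounded by $\mathcal{C}_d$ times $\int_0^t \int_{\R^d} |\phi(x,s)|\, K(h(x),s)\, dx\, ds$, which is finite by assumption. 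Consequently $\kappa_n\, |\psi_n|\, |y|^{2-dn}$ is integrable over $B_t^n$ for every $n$.

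To pass from this weighted integrability to integrability with respect to $\kappa_n(y)\,dy$, I would use that for $n \ge 2$ (and any $d \ge 1$) one has $dn - 2 \ge 0$, so $|y|^{2-dn} = |y|^{-(dn-2)}$ is a non-increasing function of $|y|$. Since $|y| < \sqrt{2dt}$ throughout $B_t^n$, this gives the pointwise lower bound $|y|^{2-dn} \ge (2dt)^{(2-dn)/2}$, and therefore
\[
\int_{B_t^n} \kappa_n(y)\, |\psi_n(y)|\, dy \;\le\; (2dt)^{(dn-2)/2} \int_{B_t^n} \kappa_n(y)\, |\psi_n(y)|\, |y|^{2-dn}\, dy \;<\; \infty.
\]

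I do not anticipate any real obstacle: the argument is essentially a direct packaging of Lemma \ref{PFT} with the domination \eqref{KntKtBound}, and the hypothesis $n \ge 2$ enters only to ensure $dn - 2 \ge 0$ so that the exponent $2 - dn$ is non-positive and the trivial lower bound on $|y|^{2-dn}$ over $B_t^n$ is available. The only point to be careful about is that Lemma \ref{PFT} as stated requires integrability against $K(h(x))\,dx\,dt$, which is why the reduction to $|\phi|$ and the constant $(4\pi)^{d/2}$ hidden in $K$ versus the exponential weight in $\mathcal{P}$ must be made explicit at the outset.
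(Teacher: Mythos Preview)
Your proposal is correct and essentially coincides with the paper's own proof. The only cosmetic difference is where the factor $(2dt)^{(dn-2)/2}$ appears: the paper multiplies and divides by $|y|^{dn-2}$ \emph{before} applying Lemma \ref{PFT} (so the factor $(2ds)^{(dn-2)/2}$ shows up on the parabolic side and is then bounded by $(2dt)^{(dn-2)/2}$), whereas you apply Lemma \ref{PFT} first and pull out the same factor afterward via the pointwise bound $|y|^{2-dn} \ge (2dt)^{(2-dn)/2}$ on $B_t^n$. Since $|y|^2 = 2ds$ under $F_{d,n}$, these are literally the same estimate in different coordinates, and both rely on $dn-2 \ge 0$ for $n \ge 2$ and on \eqref{KntKtBound} in exactly the same way.
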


\begin{proof}
An application of Lemma \ref{PFT} shows that 
\begin{align*}
\int_{B_t^n} \kappa_n(y) \abs{\psi_n(y)} dy
&= \int_{B_t^n} \kappa_n(y) \abs{\phi\pr{F_{d,n}(y)}} \abs{y}^{dn-2} \abs{y}^{2-dn}  dy \\
&= d \abs{S^{d n -1}} \int_{0}^t \int_{\R^d} \abs{ \phi\pr{x,s}} \pr{2ds}^{\frac{dn-2}2} K_n\pr{(h(x), s} dx ds \\
&\le d \abs{S^{d n -1}} \mathcal{C}_d \int_{0}^t  \int_{\R^d} \abs{\phi(x,s)} \pr{2ds}^{\frac{dn-2}2} K\pr{(h(x), s}  dx ds \\
&\le \frac{\pr{2dt}^{\frac{dn}2} \mathcal{C}_d \abs{S^{d n -1}}}{2t\pr{4 \pi}^{\frac d 2}} \int_{0}^t  s^{-\frac{d}2} \mathcal{P}\pr{s; \phi, h} ds,
\end{align*}
where we have applied \eqref{KntKtBound} and that $dn \ge 2d \ge d$.
\end{proof}

Now we introduce some function classes for parabolic and elliptic functions that will be used below.

\begin{defn}[Moderate $h$-growth at infinity]
\label{hGrowth}
Let $u : \R^d \times \pr{0, T} \to \R$ be a continuous function with locally integrable weak first order derivatives.
With $h : \R^d \to \R^d$ as in Section \ref{S:prelim} and $A = H^{-1} \pr{H^{-1}}^T$, define
\begin{equation}
\label{HDTDefs}
\begin{aligned}
\mathcal{H}\pr{t} 
= \mathcal{H}\pr{t; u,h} 
&= \int_{\R^d} \abs{u(x,t)}^2 \exp\pr{-\frac{\abs{h(x)}^2}{4t}} dx \\
\mathcal{D}\pr{t}
= \mathcal{D}\pr{t; u,h} &= \int_{\R^d} \innp{A(x) \gr u(x,t), \gr u(x,t)} \exp\pr{-\frac{\abs{h(x)}^2}{4t}} dx \\
\mathcal{T}\pr{t}
= \mathcal{T}\pr{t; u,h} &= \int_{\R^d} \abs{\frac{\del u}{\del t}(x,t)}^2 \exp\pr{- \frac{\abs{h(x)}^2}{4t}} dx.
\end{aligned}
\end{equation}
We say that such a function $u$ has \emph{moderate $h$-growth at infinity} if $\mathcal{H}$, $\mathcal{D}$, and $\mathcal{T}$ belong to $L^1\pr{\brac{0, T}, t^{- \frac d 2}dt}$.
\end{defn}

\begin{defin}[$\kappa$-weighted Sobolev space]
For $B_R \su \R^N$, we say that a function $v : B_R \to \R$ belongs to $L^{p}\pr{B_R, \kappa(y) \, dy}$, the space of \emph{$\kappa$-weighted $p$-integrable functions}, if
\begin{align*}
& \int_{B_R} \kappa(y) \abs{v(y)}^p dy < \iny. 
\end{align*}
Moreover, if both $v$ and $\gr v \in L^2\pr{B_R, \kappa(y) \, dy}$, then we say that $v$ belongs to the \emph{$\kappa$-weighted Sobolev space} and write $v \in W^{1,2}\pr{B_R, \kappa(y) \, dy}$.
\end{defin}

Now we may connect these two function classes.

\begin{lem}[Function class relationship]
\label{functionClassLem}
If $u: \R^d \times \pr{0, T} \to \R$ has moderate $h$-growth at infinity, then for $v_n: B_T^n \su \R^{d \times n} \to \R$ defined by $v_n(y) = u\pr{F_{d,n}(y)}$ and $\kappa_n$ defined by \eqref{kappaDef}, it holds that $v_n \in W^{1,2}\pr{B_T^n, \kappa_n(y) dy}$, whenever $n \ge 2$.
\end{lem}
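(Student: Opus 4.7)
The plan is to verify separately that $v_n \in L^2\pr{B_T^n, \kappa_n(y) dy}$ and that $\gr v_n \in L^2\pr{B_T^n, \kappa_n(y) dy}$. Both statements will be reduced to Corollary \ref{functionClassCor} via the pointwise identities in Lemma \ref{ChainRuleLem}, applied to carefully chosen scalar functions $\phi(x,t)$ whose $\mathcal{P}(\cdot; \phi, h)$ integrals are controlled by $\mathcal{H}, \mathcal{D}, \mathcal{T}$.

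For the $L^2$ part, I would take $\phi = u^2$ in Corollary \ref{functionClassCor}. Since $\mathcal{P}\pr{\cdot; u^2, h} = \mathcal{H}\pr{\cdot; u, h}$, moderate $h$-growth at infinity gives $\mathcal{P}\pr{\cdot; u^2, h} \in L^1\pr{\brac{0,T}, s^{-d/2}ds}$, so $v_n^2 = \phi \circ F_{d,n}$ is $\kappa_n$-integrable on $B_T^n$.

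For the gradient part, I would start from the formula
\begin{equation*}
\abs{\gr v_n}^2 = n \innp{A \gr u, \gr u} + \frac{2}{d} \frac{\del u}{\del t} \innp{A \gr u, H^T h} + \frac{2 t}{d} \abs{\frac{\del u}{\del t}}^2
\end{equation*}
from Lemma \ref{ChainRuleLem}, with the right-hand side evaluated at $F_{d,n}(y)$. The first term is handled by Corollary \ref{functionClassCor} with $\phi = \innp{A \gr u, \gr u}$, since $\mathcal{P}\pr{\cdot; \phi, h} = \mathcal{D}$. The third term is handled similarly with $\phi = \abs{\del_t u}^2$, using that $t(y) = \abs{y}^2/(2d) \le T$ on $B_T^n$ and that $\mathcal{P}\pr{\cdot; \phi, h} = \mathcal{T}$. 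For the cross term, apply Cauchy-Schwarz in the inner product with respect to $A$ to get $\abs{\innp{A \gr u, H^T h}}^2 \le \innp{A \gr u, \gr u} \innp{A H^T h, H^T h}$, and use $A = H^{-1}(H^{-1})^T$ together with $(H^{-1})^T H^T = I$ to simplify $\innp{A H^T h, H^T h} = \abs{h}^2$. Then Young's inequality reduces the cross term to constant multiples of $\innp{A \gr u, \gr u}$ and $\abs{\del_t u}^2$, provided $\abs{h(x)}$ is controlled on the image of $B_T^n$.

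This last control is the only real obstacle, since the moderate $h$-growth assumption only provides integral (not pointwise) information about $h$. The key observation that closes the argument is that on $B_T^n$, the composition $h \circ g \circ f_{d,n} = f_{d,n}$ satisfies $\abs{h(x)} = \abs{f_{d,n}(y)} = \abs{z} \le \sqrt{n}\, \abs{y} \le \sqrt{2dnT}$ by Cauchy-Schwarz applied to \eqref{zDef}. This uniform bound on $\abs{h}$ over the elliptic domain turns the cross term into a linear combination of the first and third terms (with a constant depending on $n$, $d$, $T$), both of which are already controlled, completing the proof that $v_n \in W^{1,2}\pr{B_T^n, \kappa_n(y) dy}$.
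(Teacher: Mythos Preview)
Your proof is correct and follows essentially the same route as the paper's: both reduce to the pushforward machinery (you via Corollary \ref{functionClassCor}, the paper directly via Lemma \ref{PFT}), use the formula for $\abs{\gr v_n}^2$ from Lemma \ref{ChainRuleLem}, and handle the cross term by Cauchy--Schwarz together with a bound on $\abs{h(x)}$. The only cosmetic difference is that you obtain $\abs{h(x)} = \abs{z} \le \sqrt{n}\,\abs{y} \le \sqrt{2dnT}$ on the elliptic side, while the paper reads off the equivalent bound $\abs{h(x)}^2 \le 2dns$ from the support of $K_n$ on the parabolic side.
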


\begin{proof}
An application of Lemma \ref{PFT} shows that 
\begin{align*}
\int_{B_T^n} \kappa_n(y) \abs{v_n(y)}^2 dy
&= \int_{B_T^n} \kappa_n(y) \abs{u\pr{F_{d,n}(y)}}^2 \abs{y}^{dn-2} \abs{y}^{2-dn}  dy \\
&= d \abs{S^{d n -1}} \int_{0}^T \int_{\R^d}  \abs{u(x,s)}^2 \pr{2ds}^{\frac{dn-2}2} K_n\pr{(h(x), s} dx ds \\
&\le \frac{\pr{2d}^{\frac{dn}2} \mathcal{C}_d \abs{S^{d n -1}}}{2\pr{4 \pi}^{\frac d 2}} \int_{0}^T s^{\frac{dn-d-2}2} \mathcal{H}\pr{s} ds,
\end{align*}
where we have applied \eqref{KntKtBound}.
Since $u$ has moderate $h$-growth at infinity, then
\begin{align*}
\int_{B_T^n} \kappa_n(y) \abs{v_n(y)}^2 dy
\le \frac{\pr{2dT}^{\frac{dn}2} \mathcal{C}_d \abs{S^{d n -1}}}{2T\pr{4 \pi}^{\frac d 2}} \norm{\mathcal{H}}_{L^1\pr{\brac{0,T}, t^{- \frac d 2} dt}}
< \iny.
\end{align*}
For the gradient term, we use Lemma \ref{PFT} in combination with Lemma \ref{ChainRuleLem} to get
\begin{align*}
\int_{B_T^n} \kappa_n(y) \abs{\gr v_n(y)}^2 dy
&= d n \abs{S^{d n -1}} \int_{0}^T \int_{\R^d} \innp{A\gr u, \gr u} \pr{2ds}^{\frac{dn-2}2} K_n\pr{(h(x), s} dx ds \\
&+ \abs{S^{d n -1}} \int_{0}^T \int_{\R^d} 2s \pr{\frac{\del u}{\del s}}^2\pr{2ds}^{\frac{dn-2}2} K_n\pr{(h(x), s} dx ds \\
&+ \abs{S^{d n -1}} \int_{0}^T \int_{\R^d} 2 \frac{\del u}{\del s} \innp{\pr{H^{-1}}^T\gr u, h} \pr{2ds}^{\frac{dn-2}2} K_n\pr{(h(x), s}  dx ds \\
&\le 2 d n \abs{S^{d n -1}} \int_{0}^T \int_{\R^d} \innp{A\gr u, \gr u} \pr{2ds}^{\frac{dn-2}2} K_n\pr{(h(x), s} dx ds \\
&+ 2\abs{S^{d n -1}} \int_{0}^T \int_{\R^d} s \pr{1 + \frac{\abs{h(x)}^2}{2dns}} \pr{\frac{\del u}{\del s}}^2\pr{2ds}^{\frac{dn-2}2} K_n\pr{(h(x), s} dx ds,
\end{align*}
where the last step uses Cauchy-Schwarz.
Since $\abs{h(x)}^2 \le 2 d n s$ on the support of $K_n(h(x))$, then
\begin{align*}
\int_{B_T^n} \kappa_n(y) \abs{\gr v_n(y)}^2 dy
&\le \frac{\pr{2d}^{\frac {dn}2} \mathcal{C}_d n \abs{S^{d n -1}}}{\pr{4\pi}^{\frac d 2}} \pr{\int_{0}^T s^{\frac{dn-d-2}2} \mathcal{D}\pr{s} ds 
+ \frac{2}{dn} \int_{0}^T s^{\frac{dn-d}2} \mathcal{T}\pr{s} ds } \\
&\le \frac{\pr{2dT}^{\frac {dn}2} \mathcal{C}_d n \abs{S^{d n -1}}}{T\pr{4\pi}^{\frac d 2}} \pr{  \norm{\mathcal{D}}_{L^1\pr{\brac{0,T}, t^{- \frac d 2} dt}}
+ \frac{2T}{dn} \norm{\mathcal{T}}_{L^1\pr{\brac{0,T}, t^{- \frac d 2} dt}}}
< \iny,
\end{align*}
where we have again used that $u$ has moderate $h$-growth at infinity.
\end{proof}

\section{Carleman Estimates}
\label{S:Carleman}

Carleman estimates have played a significant role in the development of the theory of unique continuation for both elliptic and parabolic equations.
The original idea (used in the elliptic setting) is attributed to Carleman \cite{Car39}, with subsequent work accomplished by Cordes \cite{Cor56}, Aronszajn \cite{Aro57} and Aronszajn et al \cite{AKS62}.
Since then, a wealth of results have been produced for both elliptic and parabolic operators; see for example \cite{JK85}, \cite{Sog90}, \cite{KT01}, \cite{E}, \cite{EV01}, \cite{EF03}, \cite{Ves03}, \cite{Fer03}, \cite{Ngu10}, and \cite{KT09}.
Additional applications of Carleman estimates include geometry, inverse problems, and control theory.

The following elliptic Carleman estimate is the $L^2 \to L^2$ case of Theorem 1 from \cite{ABG}.
The original theorem was used to establish unique continuation properties of functions that satisfy partial differential inequalities of the form $\abs{\LP v} \le\abs{V} \abs{v}$, for $v \in W^{2,q}_{loc}\pr{\Om}$, $V \in L^w_{loc}\pr{\Om}$, where $w > \frac{N}{2}$, and $\Om \subset \R^N$ is open and connected.

\begin{prop}[\cite{ABG}, Theorem 1]
\label{ECE}
For any $\tau \in \R$ and all $v \in W^{2,2}_0\pr{\R^N \setminus \set{0}}$, the following inequality holds
\begin{equation}
c\pr{\tau, N} \norm{\abs{y}^{-\tau} v}_{L^2\pr{\R^N}} \le \norm{\abs{y}^{-\tau +2} \LP v}_{L^2\pr{\R^N}},
\end{equation}
where
$$c\pr{\tau, N} = \inf_{\ell \in \Z_{\ge 0}} \abs{\pr{\frac{N}{2} + \ell + \tau - 2}\pr{\frac{N}{2} + \ell - \tau}}.$$
\end{prop}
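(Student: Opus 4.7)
The plan is to reduce the estimate to a family of one-dimensional constant-coefficient operators via the Emden--Fowler (logarithmic) change of variables combined with a spherical harmonic decomposition, and then to conclude by Plancherel's theorem.

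First, I would introduce polar coordinates $y = r\omega$ with $r = \abs{y} > 0$, $\omega \in \BB{S}^{N-1}$, together with the logarithmic substitution $t = \log r \in \R$. For $v \in W^{2,2}_0\pr{\R^N \setminus \set{0}}$, write $v(y) = r^{\tau - \frac{N}{2}} w(t, \omega)$. This normalization is chosen so that the left-hand side weight disappears: using $dy = r^{N-1} dr\, d\omega$ and $r^{-1} dr = dt$, one finds
\[
\norm{\abs{y}^{-\tau} v}_{L^2(\R^N)}^2 = \int_\R \int_{\BB{S}^{N-1}} \abs{w(t,\omega)}^2 \, d\omega \, dt.
\]
A chain-rule computation using $r\del_r = \del_t$ and $r^2 \del_r^2 = \del_t^2 - \del_t$ then yields
\[
\LP v = r^{\tau - \frac{N}{2} - 2}\, \MC{O} w, \qquad \MC{O} w := \del_t^2 w + 2(\tau - 1) \del_t w + c_\tau\, w + \LP_\omega w,
\]
where $c_\tau = \pr{\tau - \tfrac{N}{2}}\pr{\tau + \tfrac{N}{2} - 2}$ and $\LP_\omega$ is the Laplace--Beltrami operator on $\BB{S}^{N-1}$. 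An analogous weight computation shows that the right-hand side of the estimate equals $\norm{\MC{O} w}_{L^2(\R \times \BB{S}^{N-1})}^2$, so the inequality reduces to proving $c(\tau,N) \norm{w}_{L^2} \le \norm{\MC{O} w}_{L^2}$.

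Next, I would decompose $w$ in spherical harmonics, $w(t,\omega) = \sum_{\ell \ge 0} w_\ell(t) Y_\ell(\omega)$, where $-\LP_\omega Y_\ell = \lambda_\ell Y_\ell$ with $\lambda_\ell = \ell(\ell + N - 2)$. By orthogonality, $\MC{O}$ decouples into the one-dimensional constant-coefficient operators
\[
L_\ell w_\ell = w_\ell'' + 2(\tau - 1) w_\ell' + (c_\tau - \lambda_\ell) w_\ell.
\]
Taking the Fourier transform in $t$ gives $\widehat{L_\ell w_\ell}(\xi) = p_\ell(\xi)\, \widehat{w_\ell}(\xi)$, and an expansion yields
\[
\abs{p_\ell(\xi)}^2 = \pr{\xi^2 + \lambda_\ell - c_\tau}^2 + 4(\tau-1)^2 \xi^2 = \xi^4 + 2 \brac{(\ell + \tfrac{N}{2} - 1)^2 + (\tau - 1)^2} \xi^2 + (\lambda_\ell - c_\tau)^2.
\]
Since the coefficient of $\xi^2$ is nonnegative, $\abs{p_\ell(\xi)}$ attains its minimum at $\xi = 0$, so $\abs{p_\ell(\xi)} \ge \abs{\lambda_\ell - c_\tau}$. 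The factorization
\[
\lambda_\ell - c_\tau = (\ell + \tfrac{N}{2} - 1)^2 - (\tau-1)^2 = \pr{\tfrac{N}{2} + \ell + \tau - 2}\pr{\tfrac{N}{2} + \ell - \tau}
\]
matches this lower bound with the $\ell$-th entry of the infimum defining $c(\tau, N)$. Plancherel's theorem then gives $\norm{L_\ell w_\ell}_{L^2(\R)} \ge c(\tau,N) \norm{w_\ell}_{L^2(\R)}$ for every $\ell$, and summing in $\ell$ using orthonormality of the $Y_\ell$ delivers the desired inequality.

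The main obstacle is the technical matter of regularity: ensuring that the Emden--Fowler substitution and the Fourier step are justified for $v$ merely in $W^{2,2}_0\pr{\R^N \setminus \set{0}}$. I would handle this by first establishing the estimate for $v \in C_c^\iny\pr{\R^N \setminus \set{0}}$, in which case $w \in C_c^\iny\pr{\R \times \BB{S}^{N-1}}$ and every manipulation above is classical, and then passing to the general case by density, observing that on functions compactly supported away from the origin the weights $\abs{y}^{-\tau}$ and $\abs{y}^{-\tau + 2}$ are uniformly controlled.
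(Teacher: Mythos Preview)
The paper does not supply its own proof of this proposition; it is quoted verbatim from \cite{ABG} and used as a black box in the proof of Theorem~\ref{PCE}. Your argument is correct and is in fact the classical route to such weighted $L^2$ Carleman inequalities: the Emden--Fowler substitution $t=\log r$ together with the conjugation $v=r^{\tau-N/2}w$ flattens both weights, the spherical harmonic decomposition diagonalizes $\LP_\omega$, and Plancherel in the $t$-variable reduces everything to the pointwise lower bound $\abs{p_\ell(\xi)}\ge\abs{\lambda_\ell-c_\tau}$, whose factorization you computed correctly. The density step for passing from $C_c^\iny(\R^N\setminus\{0\})$ to $W^{2,2}_0(\R^N\setminus\{0\})$ is routine, exactly as you indicate. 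There is nothing to compare against in the present paper, but your proof is complete and is essentially the argument one finds in the original source.
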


\begin{rem}
In order for this theorem to be meaningful, we choose $\tau \in \R$ so that $\tau - \frac N 2 \not\in \Z_{\ge 0}$.
\end{rem}

\begin{rem}
Other versions of this theorem hold with more general norms.
More specifically, \cite[Theorem 1]{ABG} establishes that for any $1 \le q \le 2 \le p < \iny$ and $\mu := 2 - \frac N w = 2 - N \pr{ \frac 1 q - \frac 1 p} > 0$, 
$$\norm{\abs{y}^{-\tau} v}_{L^p\pr{\R^N}} \lesssim \norm{\abs{y}^{-\tau +\mu} \LP v}_{L^q\pr{\R^N}}.$$
However, since the condition that $\mu > 0$ is equivalent to $N < \frac{2 p q}{p-q}$, we must have that $p$ and $q$ are both very close to $2$ for large $N$.
In particular, when $N \to \iny$, $p, q \to 2$.
Since we use the high-dimensional limit of this elliptic Carleman estimate to establish its parabolic counterpart, this explains why we restrict to the $L^2 \to L^2$ Carleman estimate.
\end{rem}

The following parabolic Carleman estimate is a variable-coefficient $L^2 \to L^2$ version of Theorem 1 from \cite{E},
and it resembles \cite[Theorem 4]{EF03}.
For a much more general result, we refer the reader to \cite[Theorem 3]{KT09}.
The original theorem was used to prove strong unique continuation of solutions to the heat equation.  
As such, this version could be used to establish unique continuation results for solutions to variable-coefficient heat equations.

\begin{thm}[c.f. \cite{E}, Theorem 1]
\label{PCE}
For $H$ is as in \eqref{Jacobians} and $\eta = \det H$, assume that $\pr{H^{-1}}^T \gr \log \eta \in L^\iny\pr{\R^d}$ and $\di \brac{H^{-1} \pr{H^{-1}}^T\gr \log \eta} \in L^\iny\pr{\R^d}$.
Let $d \ge 1$ and take $\al \in \R$ so that $2\al - \frac{d}{2} - 3 \in \pr{0, \iny} \setminus \Z$.
Define $\eps := \dist\pr{2\al - \frac{d}{2} - 1, \Z_{\ge 0}}$ and for some $\de \in \pr{0,1}$, set 
$$T_0 = \eps \sqrt{1-\de} \norm{ 2 \di \brac{H^{-1} \pr{H^{-1}}^T\gr \log \eta} + \abs{\pr{H^{-1}}^T \gr \log \eta}^2}_{L^\iny\pr{\R^d}}^{-1}.$$
Then there is a constant $C$, depending only on $\eps$ and $\de$, such that for every $u \in C^\iny_0\pr{\R^{d} \times \pr{0, T_0} \setminus \set{\pr{0,0}}}$, it holds that
\begin{align*}
\int_0^{T_0} \int _{\R^d} \abs{u\pr{x,t}}^2 t^{-2\al} e^{-\frac{\abs{h(x)}^2}{4t}}  dx \, dt 
\le C\pr{\eps, \de} \int_0^{T_0} \int _{\R^d} \abs{\di \pr{A \gr u} + \del_t u}^2  t^{-2\al + 2} e^{-\frac{\abs{h(x)}^2}{4t}} dx \, dt,
\end{align*}
where $h$ is some invertible function and $A = H^{-1} \pr{H^{-1}}^T$.
\end{thm}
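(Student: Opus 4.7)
The plan is to apply the elliptic Carleman estimate of Proposition \ref{ECE} in dimension $N = dn$ to a suitable conjugate of $v_n(y) := u(F_{d,n}(y))$, push the resulting inequality to space-time via Lemma \ref{PFT}, and let $n \to \iny$. Because Proposition \ref{ECE} concerns the standard Laplacian $\LP$ while Lemma \ref{ChainRuleLem} produces the weighted Laplacian $\kappa_n^{-1}\di_y(\kappa_n \gr_y v_n)$, I would first conjugate by $\sqrt{\kappa_n}$: for $w_n := \kappa_n^{1/2} v_n$, a direct calculation gives
\begin{equation*}
\kappa_n^{-1/2} \LP w_n = \frac{\di_y(\kappa_n \gr_y v_n)}{\kappa_n} + v_n \brac{\tfrac{1}{2}\LP_y \log \kappa_n + \tfrac{1}{4} |\gr_y \log \kappa_n|^2} .
\end{equation*}
By \eqref{chain}, the first term equals $n[\di(A\gr u) + \del_t u] + R_n$, where $R_n = O(1)$ in $n$ depends only on $\del_t u$, $\gr \del_t u$, and $\del_t^2 u$. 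Since $\log \kappa_n(y) = \log \ga(f_{d,n}(y))$ is a function of $z = f_{d,n}(y)$ alone, summing each $z_i$-derivative over the $n$ copies produces $\LP_y \log \kappa_n = n\,\LP_z \log \ga$ and $|\gr_y \log \kappa_n|^2 = n\,|\gr_z \log \ga|^2$, so the bracketed potential equals $n\MC{M}(x)$ where, using $\log \ga(h(x)) = -\log \eta(x)$ and the change-of-variable identity $\LP_z F(h(x)) = \eta^{-1}\di_x[\eta A \gr_x(F\circ h)]$,
\begin{equation*}
\MC{M}(x) = -\tfrac12 \di[A \gr \log \eta] - \tfrac14\abs{\pr{H^{-1}}^T \gr \log \eta}^2 .
\end{equation*}
The hypotheses of the theorem then give $\MC{M} \in L^\iny(\R^d)$ with $4\|\MC{M}\|_\iny \le \|2\di[A\gr \log \eta] + |(H^{-1})^T \gr \log \eta|^2\|_\iny$.

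Next, I would choose $\tau_n := 2\al - 1 + d(n-1)/2$, tuned so that on the sphere $|y|^2 = 2ds$, the power $(|y|^2)^{(dn-2)/2 - \tau_n} = (2ds)^{d/2 - 2\al}$ combines with the $s^{-d/2}$ factor inside $K_n(h(x),s)$ to produce the correct parabolic weight $s^{-2\al} e^{-|h(x)|^2/(4s)}$ in the limit. Applying Proposition \ref{ECE} to $w_n \in C_0^\iny(\R^{dn}\setminus\{0\})$, squaring, substituting $|w_n|^2 = \kappa_n |v_n|^2$ and $|\LP w_n|^2 = \kappa_n|nf + R_n + nv_n \MC{M}|^2$ with $f := \di(A \gr u) + \del_t u$, and expanding via $(a+b+c)^2 \le (1+\si)a^2 + C_\si(b^2+c^2)$ reduces everything to $\kappa_n$-weighted ball integrals that Lemma \ref{PFT} converts into parabolic integrals weighted by $K_n(h(x),s)$, with the right-hand side acquiring an extra $(2ds)^2$ factor coming from the $|y|^4$ difference between the two norms. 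A direct computation shows
\begin{equation*}
c(\tau_n, dn) = \inf_{\ell \in \Z_{\ge 0}}\abs{\pr{dn - \tfrac{d}{2} + \ell + 2\al - 3}\pr{\tfrac{d}{2} + 1 - 2\al + \ell}} \sim dn\,\eps \quad \text{as } n \to \iny ,
\end{equation*}
since $\inf_\ell|d/2 + 1 - 2\al + \ell| = \dist(2\al - d/2 - 1, \Z_{\ge 0}) = \eps$ under the hypothesis on $\al$, so the ratio $n^2(2d)^2/c(\tau_n, dn)^2 \to 4/\eps^2$ remains bounded and supplies the multiplicative constant $C(\eps,\de)$ on the right-hand side of the parabolic estimate.

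The $R_n$ contribution is $O(1)$ in $n$ and vanishes after division by $c(\tau_n, dn)^2 \sim d^2 n^2 \eps^2$. The dangerous $(n v_n \MC{M})^2$ term, after pushforward and the pointwise bound $s \le T_0$, contributes to the right-hand side a quantity comparable to $(T_0\|\MC{M}\|_\iny/\eps)^2$ times the left-hand side integral; the choice of $T_0$ in the statement makes this comparison constant at most $1-\de$, so the term can be absorbed into the left-hand side with a leftover factor of $\de$. Finally, Lemma \ref{L:uniform} and the bound \eqref{KntKtBound}, together with the dominated convergence theorem, let me pass to $K_n(h(x),s) \to K(h(x),s) = (4\pi s)^{-d/2} e^{-|h(x)|^2/(4s)}$ in the limit $n \to \iny$, yielding the stated estimate. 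The main obstacle will be the careful bookkeeping of the various cross terms produced by the expansion of $|nf + R_n + nv_n\MC{M}|^2$ (all of strictly lower order in $n$ but requiring Cauchy-Schwarz with tuned parameters to dispose of cleanly), together with verifying that the dimensional constants arising in Lemma \ref{PFT} (in particular the ratio $|S^{dn-1-d}|/|S^{dn-1}|$ and the power $(2d)^{(dn-2)/2-\tau_n}$) cancel identically across both sides so that the surviving constant depends only on $\eps$ and $\de$.
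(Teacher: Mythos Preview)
Your proposal is correct and follows essentially the same route as the paper: conjugate $v_n$ by $\sqrt{\kappa_n}$ to reduce to the flat Laplacian, apply Proposition~\ref{ECE} with $\tau_n = 2\al + \tfrac{dn-d-2}{2}$, push forward to space-time via Lemma~\ref{PFT}, use $c(\tau_n,dn)\ge dn\,\eps$ together with the choice of $T_0$ to absorb the potential term, and pass to the limit via Lemma~\ref{L:uniform}. The only cosmetic difference is that the paper obtains the potential $n\mathcal{M}$ by applying \eqref{chain} directly to the time-independent function $\kappa_n^{-1/2}(y)=\sqrt{\eta}(x)$, whereas you compute $\tfrac12\Delta_y\log\kappa_n+\tfrac14|\nabla_y\log\kappa_n|^2$ by hand; both yield the same expression.
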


We show that Theorem \ref{PCE} follows from the elliptic result, Proposition \ref{ECE}, Lemma \ref{PFT}, and the results of Section \ref{S:prelim}.
More specifically, given $u$, we define a sequence of functions $\set{v_n}$ and we then apply Proposition \ref{ECE} to each one.
Applications of Lemma \ref{PFT} allow us to transform the integral inequalities for $v_n$ into integral inequalities for $u$.
By taking a limit of this sequence of inequalities and using \eqref{KtnDefn}, we arrive at our conclusion.

\begin{proof}
Let $u \in C^\iny_0\pr{\R^{d} \times \pr{0, T_0} \setminus \set{\pr{0,0}}}$.
For every $n \in \N$, let $v_{n} : \R^{d \times n} \to \R$ satisfy
$$v_{n}\pr{y} = u\pr{F_{d, n}\pr{y}},$$
where $F_{d,n}$ is as defined in \eqref{FdnDef}.
Note that each $v_n$ is compactly supported in $B_{T_0}^n = B_{\sqrt{2 d T_0}}$.
With $\ga(z) = \frac 1 {\eta(x)}$, define $\kappa_n : \R^{d \times n} \to \R$ to satisfy
$$\kappa_{n}\pr{y} = \frac 1 {\eta\pr{g(f_{d, n}(y))}}.$$
For $\al \in \R$ so that $2\al - \frac{d}{2} - 3 \in \pr{0, \iny} \setminus \Z$, set $\tau_n  = 2\al + \frac{d n - d - 2}{2}$.
Since $\tau_n - \frac{d n}{2} = 2\al - \frac{d}{2} - 1 \in \pr{2, \iny} \setminus \Z$, then $\tau_n - \frac{d n}{2} \not\in \Z_{\ge 0}$.

Since 
\begin{align*}
\LP \pr{\sqrt \kappa_n v_n}
&= \di \pr{\sqrt \kappa_n \gr v_n + \frac{\gr \kappa_n}{2 \sqrt \kappa_n} v_n}
= \sqrt \kappa_n \LP v_n  + \frac{\gr \kappa_n}{\sqrt \kappa_n} \cdot \gr v_n + \di\pr{\frac{\gr \kappa_n}{2 \sqrt \kappa_n}} v_n \\
&= \frac{1}{\sqrt \kappa_n} \di \pr{\kappa_n \gr v_n} - \di \pr{\kappa_n \gr\pr{\kappa_n^{-1/2}}} v_n,
\end{align*}
then for $\de > 0$ as given,
\begin{align*}
\abs{\LP \pr{\sqrt \kappa_n v_n}}^2
&\le C_\de \kappa_n \brac{ \frac 1 {\kappa_n} \di \pr{\kappa_n \gr v_n}}^2 
+  \frac {c_\de}{\kappa_n} \brac{\di \pr{\kappa_n \gr\pr{\kappa_n^{-1/2}}} }^2 \abs{\sqrt \kappa_n v_n}^2,
\end{align*}
where $C_\de = {1 + \de^{-1}}$ and $c_\de = 1 + \de$.
An application of Theorem \ref{ECE} with $v = \sqrt \kappa_n v_n$ and $\tau = \tau_n$ shows that
\begin{equation}
\label{EllCE}
\begin{aligned}
c\pr{\tau_n, d n}^2 \int_{B_{T_0}^n}\abs{\sqrt \kappa_n v_n}^2  \abs{y}^{-2\tau_n}  dy
&\le \int_{B_{T_0}^n} \abs{\LP \pr{\sqrt \kappa_n v_n}}^2 \abs{y}^{-2\tau_n +4} dy \\
&\le C_\de \int_{B_{T_0}^n}  \kappa_n \brac{ \frac 1 {\kappa_n} \di \pr{\kappa_n \gr v_n}}^2 \abs{y}^{-2\tau_n +4} dy \\
&+ c_\de \int_{B_{T_0}^n} \brac{\frac{\abs{y}^2}{\sqrt{\kappa_n}} \di \pr{\kappa_n \gr\pr{\kappa_n^{-1/2}}} }^2 \abs{\sqrt \kappa_n v_n}^2 \abs{y}^{-2\tau_n } dy.
\end{aligned}
\end{equation}
Since $u \in C^\iny_0\pr{\R^{d} \times \pr{0, T_0} \setminus \set{\pr{0,0}}}$, then $\abs{u}^2 \pr{2dt}^{\frac{d n}{2} -1 -\tau_n}$ satisfies the hypotheses of Lemma \ref{PFT} and it follows that
\begin{equation}
\label{uEst}
\begin{aligned}
\int_{B_{T_0}^n} \abs{\sqrt{\kappa_n(y)} v_{n}\pr{y}}^2  \abs{y}^{-2 \tau_n} dy 
& =\int_{B_{T_0}^n} \kappa_n(y) \abs{v_{n}\pr{y}}^2  \abs{y}^{-2 \tau_n} dy \\
&= \int_{B_{T_0}^n} \kappa_n(y) \abs{u\pr{g \circ f_{d,n}(y), \frac{\abs{y}^2}{2d}}}^2 \abs{y}^{d n - 2 -2 \tau_n} \abs{y}^{2 - d n} dy \\
&= d \abs{S^{d n-1}} \int_0^{T_0} \int_{\R^d} \abs{u\pr{x, t}}^2 \pr{2dt}^{\frac{d n}{2} -1 -\tau_n} K_n\pr{h(x),t} dx \, dt.
\end{aligned}
\end{equation}
Because $\set{\di \pr{A \gr u} +  \frac{\del u}{\del t}+ \frac 2 {dn}\brac{ \innp{A\gr \frac{\del u}{\del t}, H^T h} + \frac 1 2 \frac{\del u}{\del t} \tr\pr{H \innp{\gr_z G(h), h}} + t \frac{\del^2 u}{\del t^2} }}^2 \pr{2dt}^{\frac{d n}{2} + 1- \tau_n}$ also satisfies the hypotheses of Lemma \ref{PFT}, then another application of Lemma \ref{PFT} shows that
\begin{equation}
\label{uLPEst}
\begin{aligned}
&\frac 1 {\abs{S^{d n-1}}}\int_{B_{T_0}^n}  \kappa_n \brac{ \frac 1 {\kappa_n} \di \pr{\kappa_n \gr v_n}}^2 \abs{y}^{-2\tau_n +4} dy \\
=& \frac 1 {\abs{S^{d n-1}}}\int_{B_{T_0}^n} \kappa_n\pr{y} \brac{ \frac 1 {\kappa_n} \di \pr{\kappa_n \gr v_n}}^2 \abs{y}^{d n + 2-2 \tau_n} \abs{y}^{2 - d n} dy \\
\le& 2 d n ^2  \int_0^{T_0} \int_{\R^d} \abs{\di \pr{A \gr u} +  \frac{\del u}{\del t}}^2  \pr{2dt}^{\frac{d n}{2} + 1- \tau_n} K_n\pr{h(x),t} dx \, dt \\
+& \frac{8}{d} \int_0^{T_0} \int_{\R^d} \abs{ \innp{A\gr \frac{\del u}{\del t}, H^T h} + \frac 1 2 \frac{\del u}{\del t} \tr\pr{H \innp{\gr_z G(h), h}} + t \frac{\del^2 u}{\del t^2} }^2  \pr{2dt}^{\frac{d n}{2} + 1- \tau_n} K_n\pr{h(x),t} dx \, dt,
\end{aligned}
\end{equation}
where we have used \eqref{chain} from Lemma \ref{ChainRuleLem} and the triangle inequality to reach the last line.

\noindent Finally, because $\brac{{2 d n t} \frac{\di \pr{A \gr \sqrt \eta} }{\sqrt{\eta}}}^2 \abs{u}^2 \pr{2dt}^{\frac{d n}{2} -1 -\tau_n}$ satisfies the hypotheses of Lemma \ref{PFT}, we see that
\begin{equation}
\label{extraTerm}
\begin{aligned}
&\int_{B_{T_0}^n} \brac{\frac{\abs{y}^2}{\sqrt{\kappa_n}} \di \pr{\kappa_n \gr\pr{\kappa_n^{-1/2}}} }^2 \abs{\sqrt \kappa_n v_n}^2 \abs{y}^{-2\tau_n } dy \\
=& d \abs{S^{d n-1}} \int_0^{T_0} \int_{\R^d} \brac{{2 d n t} \frac{\di \pr{A \gr \sqrt \eta} }{\sqrt{\eta}}}^2 \abs{u\pr{x, t}}^2 \pr{2dt}^{\frac{d n}{2} -1 -\tau_n} K_n\pr{h(x),t} dx \, dt \\
\le& d n^2 \abs{S^{d n-1}}  \norm{ \frac{\di \pr{A \gr \sqrt \eta} }{\sqrt{\eta}}}_{L^\iny\pr{\R^d}}^2 \int_0^{T_0} \int_{\R^d} \abs{u\pr{x, t}}^2 \pr{2dt}^{\frac{d n}{2} +1 -\tau_n} K_n\pr{h(x),t} dx \, dt,
\end{aligned}
\end{equation}
since \eqref{chain} with $v_n = \kappa_n^{-1/2}$ gives
\begin{align*}
\frac{\abs{y}^2}{\sqrt{\kappa_n(y)}} \di_y \pr{\kappa_n(y) \gr_y \pr{\kappa_n(y)^{-1/2}}}
&= {2 d n t} \frac{\di_x \pr{A(x) \gr_x \sqrt \eta(x)} }{\sqrt{\eta(x)}}. 
\end{align*}
A computation shows that 
\begin{align*}
\frac{\di \pr{A \gr \sqrt \eta} }{\sqrt{\eta}}    
&= \frac{\di \brac{H^{-1} \pr{H^{-1}}^T \sqrt \eta \, \gr \log \eta} }{2 \sqrt{\eta}}
= \frac 1 2 \di \brac{H^{-1} \pr{H^{-1}}^T\gr \log \eta}
+ \frac 1 4 \abs{\pr{H^{-1}}^T \gr \log \eta}^2,
\end{align*}
which belongs to $L^\iny\pr{\R^d}$ by assumption.
Substituting \eqref{uEst}, \eqref{uLPEst}, and \eqref{extraTerm} into \eqref{EllCE} and simplifying shows that
\begin{equation}
\label{EllCE2}
\begin{aligned}
& c\pr{\tau_n, dn}^2 \int_0^{T_0} \int_{\R^d} \abs{u\pr{x, t}}^2 t^{\frac{d n}{2} -1 -\tau_n} K_n\pr{h(x),t} dx \, dt \\
\le& 2 C_\de \pr{2 d n}^{2} \int_0^{T_0} \int_{\R^d} \abs{\di \pr{A \gr u} +  \frac{\del u}{\del t}}^2  t^{\frac{d n}{2} + 1- \tau_n} K_n\pr{h(x),t} dx \, dt \\
+& 32 C_\de \int_0^{T_0} \int_{\R^d} \brac{ \innp{ A\gr \tfrac{\del u}{\del t}, H^T h}  + \tfrac 1 2\tfrac{\del u}{\del t} \text{tr}\pr{H \innp{\gr_z G(h), h}}+ t \tfrac{\del^2 u}{\del t^2} }^2 t^{\frac{d n}{2} + 1- \tau_n} K_n\pr{h(x),t} dx \, dt \\
+& \pr{d n \eps}^2 \pr{1 - \de^2} \int_0^{T_0} \int_{\R^d} \abs{u\pr{x, t}}^2 t^{\frac{d n}{2} -1 -\tau_n} K_n\pr{h(x),t} dx \, dt,
\end{aligned}
\end{equation}
where we have used the definition of $c_\de$ and that $\disp 4 T_0 \norm{ \frac{\di \pr{A \gr \sqrt \eta} }{\sqrt{\eta}}}_{L^\iny\pr{\R^d}} = \eps \sqrt{1 - \de}$ to reach the last line.
Observe that
\begin{align*}
c\pr{\tau_n, dn} 
&= \inf_{\ell \in \Z_{\ge 0}} \abs{\brac{\frac{d n}{2} + \ell + \pr{2\al + \frac{d n - d - 2}{2}} - 2} \brac{\frac{d n}{2} + \ell - \pr{2\al + \frac{d n - d - 2}{2}}}} \\
&= \inf_{\ell \in \Z_{\ge 0}} \abs{\brac{ d  n -2 + \ell + \pr{2\al - \frac{ d }{2} - 1}}\brac{ \ell - \pr{2\al - \frac{ d }{2} - 1}}}
\ge d n \eps,
\end{align*}
since we assumed that $2 \al - \frac d 2 - 3 > 0$.
Returning to \eqref{EllCE2}, the last term on the right may be absorbed into the left to get
\begin{equation*}
\begin{aligned}
&\int_0^{T_0} \int_{\R^d} \abs{u\pr{x, t}}^2 t^{ \frac{d}{2} - 2\al} K_n\pr{h(x),t} dx \, dt \\
\le& 2 C_\de \pr{\frac{2}{\eps \de}}^{2} \int_0^{T_0} \int_{\R^d} \abs{\di \pr{A \gr u} +  \frac{\del u}{\del t}}^2 t^{\frac{d}{2} - 2\al + 2} K_n\pr{h(x),t} dx \, dt \\
+& 2 C_\de \pr{\frac{4}{dn \eps \de}}^2 \int_0^{T_0} \int_{\R^d} \brac{ \innp{ A\gr \tfrac{\del u}{\del t}, H^T h}  + \tfrac 1 2\tfrac{\del u}{\del t} \text{tr}\pr{H \innp{\gr_z G(h), h}}+ t \tfrac{\del^2 u}{\del t^2} }^2  t^{\frac d 2 - 2\al + 2} K_n\pr{h(x),t} dx \, dt,
\end{aligned}
\end{equation*}
with $2\al = \tau_n - \frac{d n - d - 2}{2}$.
We now take the limit as $n \to \iny$. 
An application of Lemma \ref{L:uniform} shows that
\begin{equation*}
\begin{aligned}
&\int_0^{T_0} \int_{\R^d} \abs{u\pr{x, t}}^2 t^{  - 2\al} \exp\pr{- \frac{\abs{h(x)}^2}{4t}}dx \, dt \\
\le& \frac{8 \pr{1 + \de}}{\eps^2 \de^2} \int_0^{T_0} \int_{\R^d} \abs{\di \pr{A \gr u} +  \frac{\del u}{\del t}}^2 t^{- 2\al + 2} \exp\pr{- \frac{\abs{h(x)}^2}{4t}} dx \, dt,
\end{aligned}
\end{equation*}
as required.
\end{proof}

\section{Almgren Monotonicity Formula}
\label{S:Almgren}

In this section, we show that the Almgren-type frequency function associated with the parabolic operator $\text{div}(A\nabla)+\partial_t$ is monotonically non-decreasing. 
When $A = I$, a corresponding result goes back to Poon, \cite{P2}. 
In that paper,  the monotonicity was key in the proof of unique continuation results for caloric functions. 
A version of this result was later proved in the context of the parabolic, constant-coefficient Signorini problem in \cite{DGPT}, where the authors used the monotonicity to establish the optimal regularity of solutions and study the free boundary. 

To establish our parabolic result through the high-dimensional limiting technique, we require a similar result for solutions to non-homogeneous variable-coefficient elliptic equations.
Many similar results for the homogenous setting have been previously established.
For example, Almgren-type monotonicity formulas for variable-coefficient operators have also been extensively used to study a wide variety of free boundary problems, as in \cite{Gui}, \cite{GSVG}, \cite{GPSVG}, \cite{GPSVG2}, \cite{JPSVG}, \cite{BBG}, \cite{DT}. 
The following result is crucial to our upcoming parabolic proof, but it may also be of independent interest.

\begin{prop}
\label{variableAlmgren}
For some $R > 0$, let $B_R \su \R^N$.
Assume that for $\kappa: B_R \to \R_{+}$ it holds that  $\gr \log \kappa \cdot y \in L^{\iny}\pr{B_R}$.
Let $v \in W^{1,2}\pr{B_R, \kappa dy}$ be a weak solution to $\di \pr{\kappa \gr v } = \kappa \ell$ in $B_R$, where $\ell$ is integrable with respect to both $\kappa \, v$ and $\kappa \gr v \cdot y$ on each $B_r$, for $r \in \pr{0, R}$. 
For every $r \in \pr{0, R}$, assuming that each $v \rvert_{\del B_r}$ is non-trivial, define
\begin{align*}
H(r) &= H(r; v, \kappa) = \int_{\del B_r} \kappa\pr{y} \abs{v(y)}^2 d\si(y) \\
D(r) &= D(r; v, \kappa) = \int_{B_r} \kappa(y) \abs{\gr v(y)}^2 dy \\
L(r) &= L(r; v, \kappa) = \frac{r D(r; v, \kappa)}{H(r; v, \kappa)}.
\end{align*}
Set $\widetilde{L}(r) = r^{2\Upsilon} L(r)$, where $\Upsilon \ge \norm{\gr \log \kappa\cdot y}_{L^\iny(B_R)}$.
Then for all $r \in \pr{0, R}$, it holds that
\begin{align*}
\widetilde{L}'(r)
&\ge 2 r^{2\Upsilon }  \brac{\frac{\pr{ \int_{B_r} \kappa \, \ell v \, dy} \pr{ \int_{\del B_r} \kappa \, v \, \gr v \cdot y \, d\si(y)}}{ \pr{ \int_{\del B_r} \kappa \abs{v}^2 \, d\si(y)}^2}
 - \frac{\pr{\int_{B_r} \kappa \, \ell \, \gr v \cdot y \, dy}}{\pr{\int_{\del B_r} \kappa \abs{v}^2\, d\si(y)}}}.
\end{align*}
\end{prop}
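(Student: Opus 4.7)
The plan is to carry out a classical Almgren-type monotonicity computation adapted to the weighted, nonhomogeneous setting: differentiate $H$ and $D$, form $L'(r) = [(D+rD')H - rDH']/H^2$, apply Cauchy--Schwarz on $\del B_r$ to extract the target inhomogeneous terms, and absorb the remaining $\gr\kappa$-errors into the correction $2\Upsilon r^{2\Upsilon-1} L(r)$ coming from the prefactor $r^{2\Upsilon}$. For brevity I write $I(r) = \int_{B_r}\kappa\ell v \, dy$ throughout.

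The first step is to compute $H'(r)$ by coarea and substitute the weak formulation of $\di(\kappa\gr v) = \kappa \ell$ tested against $v$, namely $\int_{\del B_r}\kappa v(y\cdot\gr v)\,d\si = r(D+I)$. This yields
\[
H'(r) = \frac{N-1}{r}H + \frac{1}{r}\int_{\del B_r}(y\cdot\gr\kappa)|v|^2 \, d\si + 2D + 2I.
\]
The second step is a Pohozaev-type identity: multiply the equation by $y \cdot \gr v$ and integrate by parts twice, using $\gr v \cdot \gr(y\cdot\gr v) = |\gr v|^2 + \frac{1}{2} y\cdot\gr|\gr v|^2$. This produces
\[
rD'(r) = \frac{2}{r}\int_{\del B_r}\kappa(y\cdot\gr v)^2 \, d\si + (N-2)D + \int_{B_r}|\gr v|^2(y\cdot\gr\kappa)\, dy - 2\int_{B_r}\kappa\ell(y\cdot\gr v)\, dy.
\]

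Substituting into $H^2 L'(r) = (D + rD')H - rDH'$, the purely dimensional contributions $DH + (N-2)DH - (N-1)DH$ cancel. Cauchy--Schwarz applied to $\int_{\del B_r}\kappa v(y\cdot\gr v)\,d\si = r(D+I)$ gives $\int_{\del B_r}\kappa(y\cdot\gr v)^2 \, d\si \ge r^2(D+I)^2/H$, which combined with the $-2rD^2$ piece from $-rDH'$ yields a gain of $4rDI + 2rI^2$; after subtracting the separate $-2rDI$ also coming from $-rDH'$, this reduces to $2rDI + 2rI^2$. Dividing by $H^2$ and multiplying by $r^{2\Upsilon}$ turns this contribution into $2r^{2\Upsilon+1}I(D+I)/H^2$, which is exactly the first target term $2r^{2\Upsilon} I \cdot \int_{\del B_r}\kappa v(y\cdot\gr v)\, d\si /H^2$, while the $-2\int_{B_r}\kappa\ell(y\cdot\gr v)\, dy/H$ residue becomes the second.

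The remaining question is whether the two $\gr\kappa$-pieces, $\frac{1}{H}\int_{B_r}|\gr v|^2(y\cdot\gr\kappa)\,dy$ (from $rD'\cdot H$) and $-\frac{D}{H^2}\int_{\del B_r}(y\cdot\gr\kappa)|v|^2 \, d\si$ (from $-rD\cdot H'$), are dominated by the prefactor-correction $2\Upsilon r^{2\Upsilon-1}L(r) = 2\Upsilon r^{2\Upsilon} D/H$. The pointwise bound $|y\cdot\gr\kappa| \le \Upsilon\kappa$ gives $\int_{B_r}|\gr v|^2(y\cdot\gr\kappa)\,dy \ge -\Upsilon D$ and $\int_{\del B_r}(y\cdot\gr\kappa)|v|^2\, d\si \le \Upsilon H$, so the total leftover is at least $\frac{2\Upsilon D}{H} - \frac{\Upsilon D}{H} - \frac{\Upsilon D}{H} = 0$. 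The main obstacle is bookkeeping: the algebra must be arranged so that the Cauchy--Schwarz gain matches the two inhomogeneous target terms exactly, while what is left over is precisely of the one-sided form that the $\Upsilon$-hypothesis can handle. The shape of $\widetilde L$, and in particular the exponent $2\Upsilon$ rather than some larger constant, is thereby dictated by the computation itself.
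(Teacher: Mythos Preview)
Your proposal is correct and follows essentially the same route as the paper: differentiate $H$ and $D$ via coarea, derive the Rellich--Pohozaev identity for $rD'$, assemble $H^2L'=(D+rD')H-rDH'$, use the weak formulation $\int_{\partial B_r}\kappa v(y\cdot\nabla v)\,d\sigma=r(D+I)$ together with Cauchy--Schwarz on $\partial B_r$ to handle the quadratic boundary term, and then bound the two $\nabla\kappa$-residues by $\pm\Upsilon D$ so that they are exactly absorbed by the $2\Upsilon r^{2\Upsilon-1}L$ coming from the prefactor. The only cosmetic difference is that the paper keeps the Cauchy--Schwarz step in the unexpanded form $(\int_{\partial B_r}\kappa v\,\nabla v\cdot y)^2\le H\int_{\partial B_r}\kappa(\nabla v\cdot y)^2$ rather than substituting $r(D+I)$ first, but the algebra and the resulting inequality are identical.
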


\begin{rem}
Notice that if $v$ is a solution to the homogeneous equation $\di \pr{\kappa \gr v } = 0$ in $B_R$, i.e. $\ell = 0$, then $\widetilde{L}(r)$ is non-decreasing in $r$.
Moreover, if $\kappa = 1$, we recover the non-homogenous elliptic result from \cite[Corollary 1]{Dav18}, which is the non-homogeneous version of Poon's result, \cite{P2}.
In particular, we recover the expected monotonicity formula for solutions to elliptic equations.
\end{rem}

\begin{proof}
Observe first that since
\begin{align*}
H(r) &= \int_{\del B_r} \kappa\pr{y} \abs{v(y)}^2 d\sigma(y)
=  r^{N-1} \int_{\del B_1} \kappa\pr{r\zeta} \abs{v(r\zeta)}^2 d\sigma(\zeta),
\end{align*}
then
\begin{align*}
H'(r) 
&= \frac{N-1}{r} H(r) 
+ 2 r^{N-1} \int_{\del B_1} \kappa\pr{r\zeta} v(r\zeta) \gr v(r\zeta) \cdot \zeta d\sigma(\zeta)
+ r^{N-1} \int_{\del B_1} \gr \kappa\pr{r \zeta} \cdot\zeta \abs{v(r\zeta)}^2 d\sigma(\zeta) \\
&= \frac{N-1}{r} H(r) 
+ 2 \int_{\del B_r} \kappa \, v \, \gr v \cdot \hat{n}
+  \int_{\del B_r} \gr \kappa \cdot \hat{n} \, \abs{v}^2 ,
\end{align*}
where $\hat{n}$ indicates the outer unit normal. Moreover, integration by parts and the equation for $v$ shows that
\begin{align}
\label{IExp}
D(r) = \int_{B_r} \kappa \abs{\gr v}^2
= \int_{B_r} \frac 1 2 \di \brac{\kappa \gr \pr{v^2}} - \di\pr{\kappa \gr v} v 
= \int_{\del B_r} \kappa v \gr v \cdot \hat{n} - \int_{B_r} \kappa\ell v.
\end{align}
Now differentiating and integrating by parts shows that
\begin{align*}
D'(r) 
&= \int_{\del B_r} \kappa \abs{\gr v}^2
= \frac 1 r \int_{\del B_r} \kappa \abs{\gr v}^2 y \cdot \hat n \,
= \frac 1 r \int_{B_r} \di\pr{ \kappa \abs{\gr v}^2 y}  \\
&= \frac N r \int_{B_r} \kappa \abs{\gr v}^2
+ \frac 2 r \int_{B_r} \kappa \gr v \cdot D^2 v \, y 
+ \frac 1 r \int_{B_r} \gr \kappa \cdot y \abs{\gr v}^2 \\
&= \frac N r \int_{B_r} \kappa \abs{\gr v}^2
- \frac 2 r \int_{B_r} \kappa \abs{\gr v}^2
- \frac 2 r \int_{B_r} \di\pr{\kappa \gr v } \gr v \cdot y 
+ \frac 2 {r^2} \int_{\del B_r} \kappa \pr{\gr v \cdot y }^2
+ \frac 1 r \int_{B_r} \gr \kappa \cdot y \abs{\gr v}^2 \\
&= \frac {N-2} r D(r)
+ 2 \int_{\del B_r} \kappa \pr{\gr v \cdot \hat{n} }^2
+ \int_{B_r} \gr \kappa \cdot \frac{y}{r} \abs{\gr v}^2
- 2 \int_{B_r} \kappa \, \ell \, \gr v \cdot \frac{y}{r}.
\end{align*}
Therefore, by putting it all together, we see that
\begin{align*}
H(r)^2 L'(r) 
&= D(r)H(r) + r D'(r) H(r) - r D(r) H'(r) \\
&= D(r)H(r)  \\
&+ \brac{\pr{N-2} D(r)+ 2r \int_{\del B_r} \kappa \pr{\gr v \cdot \hat{n} }^2 +r \int_{B_r} \gr \kappa \cdot \frac{y}{r} \abs{\gr v}^2 - 2r \int_{B_r} \kappa \, \ell \, \gr v \cdot \frac{y}{r}}H(r) \\
&- D(r) \brac{\pr{N-1} H(r)  + 2r \int_{\del B_r} \kappa v \gr v \cdot \hat{n} + r \int_{\del B_r} \gr \kappa \cdot \hat{n} \abs{v}^2 } \\
&= r \pr{2 \int_{\del B_r} \kappa \pr{\gr v \cdot \hat{n} }^2 + \int_{B_r} \gr \kappa \cdot \frac{y}{r} \abs{\gr v}^2 - 2 \int_{B_r} \kappa \, \ell \, \gr v \cdot \frac{y}{r}}\pr{\int_{\del B_r} \kappa \abs{v}^2}  \\
&- 2r \pr{\int_{\del B_r} \kappa v \gr v \cdot \hat{n} - \int_{B_r}\kappa \, \ell v} \pr{ \int_{\del B_r} \kappa v \gr v \cdot \hat{n}}
- r \pr{\int_{B_r} \kappa \abs{\gr v}^2} \pr{ \int_{\del B_r} \gr \kappa \cdot \hat{n} \abs{v}^2 } \\
&= -\frac 2 r \brac{ \pr{\int_{\del B_r} \kappa \, v \gr v \cdot y }^2 - \pr{\int_{\del B_r} \kappa \abs{v}^2} \pr{\int_{\del B_r} \kappa \pr{\gr v \cdot y }^2 }} \\
&- \brac{\pr{\int_{B_r} \kappa \abs{\gr v}^2} \pr{ \int_{\del B_r} \frac{\gr \kappa \cdot y}{\kappa} \kappa \abs{v}^2 } - \pr{\int_{B_r} \frac{\gr \kappa \cdot y}{\kappa} \kappa \abs{\gr v}^2}\pr{\int_{\del B_r} \kappa \abs{v}^2} }  \\
&+ 2 \brac{\pr{ \int_{B_r} \kappa \, \ell v} \pr{ \int_{\del B_r} \kappa \, v \, \gr v \cdot y}
 - \pr{\int_{B_r} \kappa \, \ell \, \gr v \cdot y}\pr{\int_{\del B_r} \kappa \abs{v}^2}} .
\end{align*}
An application of Cauchy-Schwartz shows that
\begin{align*}
\pr{\int_{\del B_r} \kappa v \gr v \cdot y }^2
&\le \pr{\int_{\del B_r} \kappa \abs{v}^2}\pr{\int_{\del B_r} \kappa \pr{\gr v \cdot y}^2}
\end{align*}
while with $\Upsilon \ge \norm{\gr \log \kappa \cdot y}_{L^\iny(B_R)}$, we get that
\begin{align*}
&\abs{\pr{\int_{B_r} \kappa \abs{\gr v}^2} \pr{ \int_{\del B_r} \frac{\gr \kappa \cdot y}{\kappa} \kappa \abs{v}^2 } - \pr{\int_{B_r} \frac{\gr \kappa \cdot y}{\kappa} \kappa \abs{\gr v}^2}\pr{\int_{\del B_r} \kappa \abs{v}^2} }
\le 2 \Upsilon D\pr{r} H\pr{r}.
\end{align*}
It follows that
\begin{align*}
L'(r) 
&\ge - \frac {2\Upsilon} r \frac{r D\pr{r}}{ H\pr{r}}
+ 2 \brac{\frac{\pr{ \int_{B_r} \kappa \, \ell v} \pr{ \int_{\del B_r} \kappa v \gr v \cdot y}}{ \pr{ \int_{\del B_r} \kappa \abs{v}^2 }^2}
 - \frac{\pr{\int_{B_r} \kappa \, \ell \, \gr v \cdot y}}{\pr{\int_{\del B_r} \kappa \abs{v}^2}}},
\end{align*}
or rather,
\begin{align*}
\pr{r^{2\Upsilon} L(r)}'
= r^{2\Upsilon }  L'(r) + \frac{2\Upsilon} r r^{2\Upsilon}  L(r)
&\ge 2r^{2\Upsilon}  \brac{\frac{\pr{ \int_{B_r} \kappa \, \ell v} \pr{ \int_{\del B_r} \kappa v \gr v \cdot y}}{ \pr{ \int_{\del B_r} \kappa \abs{v}^2 }^2}
 - \frac{\pr{\int_{B_r} \kappa \, \ell \, \gr v \cdot y}}{\pr{\int_{\del B_r} \kappa \abs{v}^2}}}.
\end{align*}
\end{proof}

Now we use Lemma \ref{PFTS} in combination with Proposition \ref{variableAlmgren} to establish its parabolic counterpart.
Before stating the result, we discuss the kinds of solutions that we work with.

Let $u: \R^d \times \pr{0, T} \to \R$ have moderate $h$-growth at infinity, as described in Definition \ref{hGrowth}.
For every $t \in \pr{0, T}$, assume first that $u$ is sufficiently regular to define the functionals
\begin{equation}
\label{IIntegrals}
\begin{aligned}
\mathcal{I}(t)
&= \mathcal{I}(t; u,h) 
= \int_{\R^d} \abs{u\pr{x,t}} \abs{ \innp{A\gr u, {H^T h}} + 2 t \, \del_t u} \exp\pr{-\frac{\abs{h(x)}^2}{4t}} dx \\
\mathcal{J}(t)
&= \mathcal{J}(t; u,h) 
= \int_{\R^d} \abs{J(x,t)} \abs{u\pr{x,t}} \exp\pr{-\frac{\abs{h(x)}^2}{4t}} dx \\
\mathcal{K}(t)
&= \mathcal{K}(t; u,h) 
= \int_{\R^d} \abs{J(x,t)} \abs{ \innp{A\gr u, {H^T h}} + 2 t \, \del_t u} \exp\pr{-\frac{\abs{h(x)}^2}{4t}} dx,
\end{aligned}
\end{equation} 
where
\begin{equation}
\label{JDefn}
J(x,t) = J(x, t; u,h) = \frac{1}{d} \brac{ 2 \innp{A\gr \frac{\del u}{\del t}, H^T h}  
+ \frac{\del u}{\del t} \tr\pr{H \innp{\gr_z G(h), h}}
+ 2t \frac{\del^2 u}{\del t^2} }
\end{equation}
and all derivatives are interpreted in the weak sense.
Then we say that such a function $u$ belongs to the function class $\mathfrak{A}\pr{\R^d \times \pr{0, T}, h}$ if $u$ has moderate $h$-growth at infinity (so is consequently continuous), and for every $t_0 \in \pr{0, T}$, there exists $\eps \in \pr{0, t}$ so that
\begin{equation}
\label{AlmgrenClass1}
\mathcal{I} \in L^\iny\brac{t_0 - \eps, t_0}
\end{equation}
and there exists $p > 1$ so that
\begin{equation}
\label{AlmgrenClass2}   
\mathcal{J} \in L^{p}\pr{\brac{0, t_0}, t^{- \frac d 2} dt},
\quad 
\mathcal{K} \in L^{p}\pr{\brac{0, t_0}, t^{- \frac d 2} dt}.
\end{equation}
This is the class of functions that we consider in our result.
We remark that this may not be the weakest set of conditions under which our proof holds, but it is far less restrictive than assuming that our solutions are smooth and compactly supported, for example.

\begin{thm}
\label{T:parabolicALM}
Assume that $\tr\pr{H \innp{\gr_z G(h),h}} \in L^\iny(\R^d)$, where $h$, $H$, and $G$ are described by \eqref{xDef}, \eqref{zDef}, and \eqref{Jacobians}.
Define $A = H^{-1}\pr{H^{-1}}^T: \R^d \to \R^{d \times d}$ and let $u \in \mathfrak{A}\pr{\R^d \times \pr{0, T}, h}$ be a non-trivial solution to $\di \pr{A \gr u} + \del_t u = 0$ in $\R^d \times \pr{0, T}$.
For every $t \in \pr{0, T}$, define 
\begin{align*}
\mathcal{H}\pr{t}
= \mathcal{H}\pr{t; u,h} 
&= \int_{\R^d} \abs{u(x,t)}^2 e^{-\frac{\abs{h(x)}^2}{4t}} dx \\
\mathcal{D}\pr{t}
= \mathcal{D}\pr{t; u,h} 
&= \int_{\R^d} \innp{A(x) \gr u(x,t), \gr u(x,t)} e^{-\frac{\abs{h(x)}^2}{4t}} dx \\
\mathcal{L}\pr{t}
= \mathcal{L}\pr{t; u,h} 
&= \frac{t \mathcal{D}\pr{t; u, h}}{\mathcal{H}\pr{t; u, h}}.
\end{align*}
Set $\widetilde{\mathcal{L}}\pr{t} = t^{\Upsilon} \mathcal{L}\pr{t}$, where $\Upsilon \ge \norm{\tr\pr{H \innp{\gr_z G(h), h}}}_{L^\iny(
\R^d)}$.
Then $\widetilde{\mathcal{L}}\pr{t}$ is monotonically non-decreasing in $t$.
\end{thm}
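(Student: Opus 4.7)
The plan is to apply the elliptic monotonicity result, Proposition~\ref{variableAlmgren}, to the sequence $v_n(y) = u\pr{F_{d,n}(y)}$ on balls in $\R^{d \times n}$, and then to take the high-dimensional limit via the pushforward identities of Lemmas~\ref{PFTS} and~\ref{PFT}. By Lemma~\ref{functionClassLem}, each $v_n$ lies in $W^{1,2}(B_T^n, \kappa_n\,dy)$. Corollary~\ref{ChainRuleCor}, combined with $\di(A\gr u) = -\del_t u$ and its differentiated form $\di(A\gr \del_t u) = -\del_t^2 u$, yields $\di_y(\kappa_n \gr_y v_n) = \kappa_n\,\ell_n$ with $\ell_n(y) = J(F_{d,n}(y))$ for $J$ as in \eqref{JDefn}. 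Crucially, Lemma~\ref{ChainRuleLem1} identifies $y \cdot \gr_y \log \kappa_n = \tr(H \innp{\gr_z G(h), h})$, a quantity that is bounded in $L^\iny(\R^d)$ by hypothesis and independent of $n$. Therefore I apply Proposition~\ref{variableAlmgren} with the same $\Upsilon$ for every $n$ to obtain
\begin{align*}
\widetilde L_n'(r) \ge \mathcal{R}_n(r), \qquad \widetilde L_n(r) := r^{2\Upsilon} L(r; v_n, \kappa_n),
\end{align*}
where $\mathcal{R}_n(r)$ denotes the right-hand side in Proposition~\ref{variableAlmgren}.

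Next I translate the elliptic functionals to parabolic ones. With $t = r^2/(2d)$, Lemma~\ref{PFTS} gives $H(r; v_n, \kappa_n) = \abs{S^{dn-1}}(2dt)^{(dn-1)/2} \int_{\R^d}\abs{u(x,t)}^2 K_{t,n}(h(x))\,dx$; Lemma~\ref{PFT} combined with the formula for $\abs{\gr_y v_n}^2$ from Lemma~\ref{ChainRuleLem} gives
\begin{align*}
D(r; v_n, \kappa_n) = d\abs{S^{dn-1}} \int_0^t \!\!\int_{\R^d} \pr{n \innp{A\gr u, \gr u} + \tfrac 2 d \del_s u \brac{\innp{A\gr u, H^T h} + s\del_s u}}(2ds)^{(dn-2)/2} K_n(h(x), s)\,dx\,ds.
\end{align*}
After cancellation, $L(r; v_n, \kappa_n)$ reduces to an $s$-integral against the weight $(s/t)^{(dn-2)/2}\,ds$ on $[0,t]$, which has total mass $2t/(dn)$ and concentrates at $s = t$ as $n \to \iny$. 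The $n\innp{A\gr u, \gr u}$ contribution therefore yields $2t \mathcal{D}(t)/\mathcal{H}(t)$ in the limit while the $\del_s u$-terms are $O(1/n)$; using Lemma~\ref{L:uniform} and the bound \eqref{KntKtBound} in a dominated convergence argument (justified by the moderate $h$-growth of $u$), I conclude $L(r; v_n, \kappa_n) \to 2\mathcal{L}(t)$ pointwise, so $\widetilde L_n(r) \to 2(2d)^\Upsilon\,\widetilde{\mathcal{L}}(t)$.

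The main obstacle is that Proposition~\ref{variableAlmgren} gives only a differential inequality, not monotonicity, because $\ell_n \ne 0$; pointwise passage to the limit does not by itself yield monotonicity of $\widetilde{\mathcal{L}}$. To overcome this, I integrate: for $0 < r_1 < r_2 \le \sqrt{2dT}$,
\begin{align*}
\widetilde L_n(r_2) - \widetilde L_n(r_1) \ge \int_{r_1}^{r_2} \mathcal{R}_n(r)\,dr.
\end{align*}
Every term in $\mathcal{R}_n$ is a ratio whose numerator contains a bulk integral of the form $\int_{B_r}\kappa_n \ell_n v_n\,dy$ or $\int_{B_r}\kappa_n \ell_n \gr v_n \cdot y\,dy$, with no compensating factor of $n$. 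Applying Lemma~\ref{PFT}, each such bulk integral carries the weight $(s/t)^{(dn-2)/2}$, which tends to zero pointwise for $s < t$; dominated convergence using \eqref{KntKtBound} and the integrability conditions \eqref{AlmgrenClass1}--\eqref{AlmgrenClass2} shows that each such bulk integral vanishes as $n \to \iny$. Since the boundary denominator $\int_{\del B_r}\kappa_n \abs{v_n}^2\,d\sigma$ stays bounded below by the nontriviality of $u$, one obtains $\mathcal{R}_n(r) \to 0$ pointwise. The hardest step is then to pass from pointwise convergence of $\mathcal{R}_n$ to convergence of $\int_{r_1}^{r_2}\mathcal{R}_n(r)\,dr$: here one produces a uniform-in-$r$ majorant on $[r_1, r_2]$ using H\"older's inequality with the exponent $p > 1$ from \eqref{AlmgrenClass2}, which accommodates the varying weight in $t = r^2/(2d)$. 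Passing to the limit in the integrated inequality then yields $\widetilde{\mathcal{L}}(t_2) \ge \widetilde{\mathcal{L}}(t_1)$, as required.
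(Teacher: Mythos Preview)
Your overall strategy matches the paper's exactly: apply Proposition~\ref{variableAlmgren} to $v_n$, convert via the pushforward lemmas, integrate the differential inequality, and show the remainder vanishes. The treatment of the error term via H\"older with the exponent $p>1$ from \eqref{AlmgrenClass2} is also what the paper does.

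There is, however, a genuine gap in your computation of $\lim_{n\to\infty} L(r; v_n, \kappa_n)$. You evaluate $D(r;v_n,\kappa_n)$ from the formula for $|\gr_y v_n|^2$ in Lemma~\ref{ChainRuleLem}, which produces a bulk integral carrying a factor of $n$; after the pushforward this becomes $dn \int_0^t (s/t)^{(dn-2)/2} g_n(s)\,ds$ with $g_n(s) = \int \innp{A\gr u,\gr u} K_n(h(x),s)\,dx$. You then invoke concentration of the weight at $s=t$ together with dominated convergence to conclude that this tends to $2t\,g(t)$. But this is a double limit: the approximate identity $\tfrac{dn}{2t}(s/t)^{(dn-2)/2}$ and the integrand $g_n$ both depend on $n$. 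Dominated convergence alone does not handle it; one would need something like continuity of $s\mapsto g_n(s)$ at $s=t$ uniformly in $n$, and the hypotheses of $\mathfrak{A}$ give only $\mathcal{D}\in L^1([0,T],t^{-d/2}dt)$, not local continuity or even local boundedness of $\mathcal{D}$.

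The paper sidesteps this by expressing $D(r;v_n,\kappa_n)$ through the integration-by-parts identity \eqref{IExp}, namely $D(r)=\int_{\del B_r}\kappa_n v_n\,\gr v_n\cdot\hat n-\int_{B_r}\kappa_n\ell_n v_n$. The first term is a surface integral with \emph{no} factor of $n$, which Lemma~\ref{PFTS} converts directly (no concentration needed) into $\int_{\R^d} u\,\brac{\innp{A\gr u, H^T h}+2t\,\del_t u}\,K_{t,n}(h(x))\,dx$; the bulk $\ell_n$-term vanishes in the limit by the same mechanism you use for $\mathcal{R}_n$. A single application of DCT then gives the pointwise limit, and one further integration by parts in $x$ (using $\del_t u = -\di(A\gr u)$ and $\gr_x K_t(h)=-\tfrac{1}{2t}H^T h\,K_t(h)$) identifies it with $\mathcal{L}(t)$. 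You should replace your concentration argument with this route.

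One smaller omission: your claim that the denominator $\int_{\del B_r}\kappa_n|v_n|^2\,d\sigma$ ``stays bounded below by the nontriviality of $u$'' hides real work. The paper invokes backward uniqueness to ensure $u(\cdot,t)\not\equiv 0$ for each $t$, and then uses continuity of $u$ on compact sets to produce a uniform-in-$n$ lower bound on $\mathcal{H}_n(t)$ for $t\in[t_0-\de,t_0]$ and $n\ge N$; only with this in hand does the H\"older estimate on the numerator yield uniform vanishing of the error.
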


\begin{proof}

We first check that with $\kappa_n$ and $v_n$ defined through the transformation maps $F_{d,n}$, the hypothesis of Lemma \ref{variableAlmgren} are satisfied.
Recall that $B_T^n \su \R^{d \times n}$ is given by \eqref{BtnDefn}.
Let $\kappa_n$ be as in Lemma \ref{ChainRuleLem1}, which shows that
$$\norm{\gr \log \kappa_n \cdot y}_{L^\iny(B_T^n)}
\le \norm{\tr\pr{H \innp{\gr_z G(h),h}}}_{L^\iny({\R^d})} < \iny.$$ 
In particular, $\gr \log \kappa_n \cdot y \in L^\iny(B_T^n)$ for each $n$.

Let $u \in \mathfrak{A}\pr{\R^{d} \times \pr{0,T}, h}$ be a non-trivial solution to $\displaystyle \di \pr{A \gr u} + \del_t u = 0$ in $\R^d \times \pr{0,T}$.
For every $n \in \N_{\ge 2}$, let $v_n : B_{T}^n \to \R$ satisfy
$$v_n\pr{y} = u\pr{F_{d,n}\pr{y}}.$$
Since $u$ has moderate $h$-growth at infinity, then Lemma \ref{functionClassLem} shows that each $v_n$ belongs to $W^{1,2}\pr{B_T^n, \kappa_n(y) \, dy}$.

An application of Corollary \ref{ChainRuleCor} shows that 
\begin{align*}
\frac 1 {\kappa_n(y)} \di \pr{ \kappa_n(y)\gr v_n}
&= J\pr{x,t},
\end{align*}
where $J$ is defined in \eqref{JDefn} and does not depend on $n$. 
For every $n$, define $\ell_{n} : B_{T}^n \to \R$ so that 
$$\ell_n\pr{y} = J\pr{F_{d,n}\pr{y}}$$
and then
$$\di \pr{\kappa_n \gr v_n} = \kappa_n \ell_n.$$

Since $u \in \mathfrak{A}\pr{\R^{d} \times \pr{0,T}, h}$, then \eqref{AlmgrenClass2} implies that for every $t \in \pr{0, T}$, $\mathcal{J}$ belongs to $L^{1}\pr{\brac{0, t}, s^{- \frac d 2} ds}$.
Corollary \ref{functionClassCor} then shows that $\ell_{n} v_n$ is integrable with respect to $\kappa_n \, dy$ in each $B_t^n$.
Similarly, because $\mathcal{K}$ belongs to $L^{1}\pr{\brac{0, t}, s^{- \frac d 2} ds}$, then Lemma \ref{ChainRuleLem} and Corollary \ref{functionClassCor} show that $\ell_{n} \gr v_n \cdot y$ is integrable with respect to $\kappa_n \, dy$ in each $B_t^n$.
Rephrased, this means that $\ell_n$ is integrable with respect to both $\kappa_n \, v_n$ and $\kappa_n \, \gr v_n \cdot y$ on each $B_t^n$.

By backward uniqueness of heat equations, (as in \cite{Ngu10}, for example), $u\pr{\cdot, t}$ is a non-trivial function of $x$ for each $t \in \pr{0, T}$.

Since $v_n \rvert_{\del B_r} = u \pr{\cdot, \frac{r^2}{2d}}$ is non-trivial for each $r \in \pr{0, \sqrt{2dT}}$, then all of the assumptions from Proposition \ref{variableAlmgren} hold. 
Therefore, we may apply Proposition \ref{variableAlmgren} to each $v_n$ on any ball of radius $\sqrt{2 d t}$ for $t < T$.

First we compute the frequency function associated to $v_n$ on the ball of radius $\sqrt{2 d t}$.
By Lemma \ref{PFTS},
\begin{align}
\label{HExpression}
H\pr{\sqrt{2 d t}; v_n, \kappa_n} 
&= \int_{S_t^n} \kappa_n(y) \abs{v_n\pr{y}}^2 d\si(y) 
= \pr{2 d t}^{\frac{d n -1}{2}} \abs{S^{d  n -1}} \int_{\R^d} \abs{u\pr{x,t}}^2 K_{t,n}\pr{h\pr{x}} d{x}.
\end{align}
Lemmas \ref{PFTS} and \ref{ChainRuleLem} imply that
\begin{equation}
\label{D1Term}
\begin{aligned}
\frac{\pr{2 d t}^{ -\frac{d n -1}{2}}}{\abs{S^{d n -1}}}\int_{S_t^n} \kappa_n(y) v_n\pr{y} \pr{y \cdot \gr v_n\pr{y}} d\si(y) 
=& \int_{\R^d} \brac{\innp{A\gr u, {H^T h}} + 2 t \del_t u} u\pr{x,t} K_{t,n}\pr{h\pr{x}} d{x},
\end{aligned}
\end{equation}
while lemma \ref{PFT} implies that
\begin{equation}
\label{D2Term}
\begin{aligned}
\frac{1}{\abs{S^{d n -1}} }\int_{B_t^n} \kappa_n(y) \ell_n\pr{y} v_n\pr{y} \abs{y}^{dn -2} \abs{y}^{2- d n } d{y}
=& d \int_0^t \int_{\R^d} J(x,s) u\pr{x,s} \pr{2 d s }^{\frac{d n -2}{2}} K_{n}\pr{h\pr{x}, s} dx \, ds.
\end{aligned}
\end{equation}
Using the expression \eqref{IExp} along with \eqref{D1Term} and \eqref{D2Term}, we see that
\begin{equation*}
\begin{aligned}
\frac{1}{\abs{S^{d n -1}}} D\pr{\sqrt{2 d t}; v_n, \kappa_n}
&= \pr{2 d t}^{ \frac{d n -2}{2}}\int_{\R^d} \brac{\innp{A\gr u, {H^T h}} + 2 t \del_t u} u\pr{x,t} K_{t,n}\pr{h\pr{x}} d{x} \\
&- d \int_0^t \int_{\R^d} J(x,s) u\pr{x,s} \pr{2 d s }^{\frac{d n -2}{2}} K_{n}\pr{h\pr{x}, s} dx \, ds.
\end{aligned}
\end{equation*}
We remark that since $u \in \mathfrak{A}\pr{\R^d \times \pr{0, T}}$, then Lemmas \ref{PFTS} and \ref{PFT} guarantee that the integrals in \eqref{HExpression}, \eqref{D1Term}, and \eqref{D2Term} are all well-defined.
Therefore,
\begin{align*}
L_n(t)
&:=\frac 1 2 L\pr{\sqrt{2 d t}; v_n, \kappa_n} 
= \frac{\sqrt{2 d t} D\pr{\sqrt{2 d t}; v_n, \kappa_n}}{2 H\pr{\sqrt{2 d t}; v_n, \kappa_n} }  \\
&= \frac{ \int_{\R^d} u\pr{x,t} \brac{\innp{A\gr u, {H^T h}} + 2 t \del_t u} K_{t,n}\pr{h\pr{x}} d{x} 
- d\int_0^t \int_{\R^d} J(x,s) u\pr{x,s} \pr{\frac s t }^{\frac{d  n -2}{2}} K_{n}\pr{h\pr{x}, s} dx \, ds}{ 2\int_{\R^d} \abs{u\pr{x,t}}^2  K_{t,n}\pr{h\pr{x}} d{x}}  \\
&= \frac{ \int_{\R^d} u\pr{x,t} \brac{\innp{A\gr u, {H^T h}} + 2 t \del_t u} K_{t,n}(h(x)) d{x} }{ 2\int_{\R^d} \abs{u\pr{x,t}}^2  K_{t,n}(h(x)) d{x}}  \\
&- \frac{ d\int_0^t \int_{\R^d} J(x,s) u\pr{x,s} \pr{\frac s t - \frac{\abs{h(x)}^2}{2 d n t}}^{\frac{d n - d - 2}{2}}\chi_{B_{ns}}(h(x)) dx \, ds}{2\int_{\R^d} \abs{u\pr{x,t}}^2  \pr{1 - \frac{\abs{h(x)}^2}{2 d n t}}^{\frac{d n - d - 2}{2}}\chi_{B_{nt}}(h(x)) d{x}}. 
\end{align*}
Thus,
\begin{align*}
\lim_{n \to \iny} L_n(t) 
&= \frac{ \int_{\R^d} u\pr{x,t} \brac{\innp{A\gr u, {H^T h}} + 2 t \del_t u} K_{t}\pr{h\pr{x}} d{x} }{ 2\int_{\R^d} \abs{u\pr{x,t}}^2  K_{t}\pr{h\pr{x}} d{x}} ,
\end{align*}
where we use $\displaystyle \lim_{n \to \iny} K_{t,n}(x)= K_t(x)$ from \eqref{KtnDefn}, the bound \eqref{KntKtBound}, and that 
$$\lim_{n \to \iny} \abs{\pr{\frac s t - \frac{\abs{h(x)}^2}{2 d n t}}^{\frac{d n - d - 2}{2}}\chi_{B_{ns}}(h(x))} \le \lim_{n \to \iny }\pr{\frac s t}^{\frac{d n - d - 2}{2}} = 0$$
for every $s \in \pr{0, t}$ along with the Dominated Convergence Theorem.

Since $\del_t u = - \di \pr{A \gr u}$ and $\gr \pr{K_t \circ h} = - \frac 1 {2t} H^T h \, K_t\pr{h}$, then
\begin{align*}
& \frac 1 2 \int_{\R^d} u\pr{x,t} \brac{\innp{A\gr u, {H^T h}} + 2 t \del_t u} K_{t}\pr{h\pr{x}} d{x} \\
&= -t \int_{\R^d} u\pr{x,t} \brac{\innp{A\gr u, \gr K_{t}\pr{h(x)}} + \di\pr{A \gr u} K_{t}\pr{h(x)}}  d{x} \\
&= -t \int_{\R^d} u\pr{x,t} \di \brac{A\gr u \, K_{t}\pr{h(x)}}  d{x}
= t \int_{\R^d} \innp{A\gr u, \gr u} K_{t}\pr{h(x)} d{x}
\end{align*}
and we deduce that
\begin{align}
\label{LtoL}
\lim_{n \to \iny} L_n(t)
&= \mathcal{L}\pr{t; u, h}.
\end{align}
Define $\widetilde{L}_n(t) = t^{\Upsilon_n} L_n(t)$ for some $\Upsilon_n \ge \norm{\gr \log \kappa_n \cdot y}_{L^\iny(B_{T}^n)}$.
Lemma \ref{ChainRuleLem1} shows that
$$\norm{\gr \log \kappa_n \cdot y}_{L^\iny(B_T^n)} \le \norm{\tr\pr{H \innp{\gr_z G(h) , h}}}_{L^\iny({\R^d})} =: \Upsilon,$$ 
so we may choose $\Upsilon_n = \Upsilon$ for all $n \in \N$.
Observe that
\begin{align*}
    \widetilde{L}_n(t)
    &= t^{\Upsilon} L_n(t)
    = \frac 1 2 t^{\Upsilon}  L\pr{\sqrt{2 d t}; v_n, \kappa_n}
    = \frac {1}{2 \pr{2d}^{\Upsilon}} \pr{2dt}^{\Upsilon}  L\pr{\sqrt{2 d t}; v_n, \kappa_n}
    = \frac {1}{2 \pr{2d}^{\Upsilon}} \widetilde{L}\pr{\sqrt{2 d t}; v_n, \kappa_n},
\end{align*}
where $\widetilde L$ is as given in Proposition \ref{variableAlmgren}.
An application of the chain rule shows that
\begin{align*}
    \frac{d}{dt}\widetilde{L}_n(t)
    &= \frac {1}{2 \pr{2d}^{\Upsilon}} \widetilde{L}'\pr{\sqrt{2 d t}; v_n, \kappa_n} \frac{\sqrt{2d}}{2 \sqrt{t}}.
\end{align*}
By Proposition \ref{variableAlmgren}, it follows that
\begin{align*}
\frac{d}{dt}\widetilde{L}_n(t)
&\ge t^{\Upsilon} \sqrt{\frac{d}{2t}} \brac{ \frac{\pr{ \int_{S_t^n} \kappa_n \, v_n \,{ y \cdot \gr v_n} } \pr{ \int_{B_t^n} \kappa_n \, \ell_n \, v_n  }}{ \pr{ \int_{S_t^n} \kappa_n \, \abs{v_n}^2 }^2 } 
- \frac{ \int_{B_t^n} \kappa_n \, \ell_n \, { y \cdot \gr v_n }}{ \int_{S_t^n} \kappa_n \, \abs{v_n}^2}}.
\end{align*}
By Lemmas \ref{PFT} and \ref{ChainRuleLem}
\begin{equation*}
\begin{aligned}
& \frac 1 {d \abs{S^{d n -1}} } \int_{B_t^n} \kappa_n(y) \, \ell_n\pr{y} \pr{ y \cdot \gr v_n\pr{y} }\abs{y}^{d n -2} \abs{y}^{2 - d n} d{y} \\
&= \int_0^t \int_{\R^d} J(x,s) \brac{ \innp{A\gr u, {H^T h}} + 2 s \, \del_s u} \pr{2 d s}^{\frac{d n -2}{2}} K_{n}\pr{h(x),s} d{x} \, d{s}.
\end{aligned}
\end{equation*}
Therefore, substituting this along with the expressions from \eqref{HExpression}, \eqref{D1Term} and \eqref{D2Term} into the previous inequality shows that
\begin{align*}
&\frac{d}{dt}\widetilde{L}_n(t)
\ge - d t^{\Upsilon-1} \frac{ \int_0^t \pr{\frac s t}^{\frac{d n -2}{2}} \int_{\R^d} J(x,s)
\brac{ \innp{A\gr u, {H^T h}} + 2 s \, \del_s u}  K_{n}\pr{h(x),s} d{x} \, d{s} }{2 \int_{\R^d} \abs{u\pr{x,t}}^2 K_{t,n}\pr{h\pr{x}} d{x}}  \\
&+ d t^{\Upsilon-1} \frac{\pr{\int_{\R^d} u\pr{x,t} \brac{\innp{A\gr u, {H^T h}} + 2 t \, \del_t u} K_{t,n}\pr{h\pr{x}} d{x}} \pr{ \int_0^t \pr{\frac s t}^{\frac{d n -2}{2}} \int_{\R^d}J(x,s) u\pr{x,s}  K_{n}\pr{h\pr{x}, s} dx \, ds }}{2 \pr{ \int_{\R^d} \abs{u\pr{x,t}}^2 K_{t,n}\pr{h\pr{x}} d{x}}^2 }.
\end{align*}
Estimate \eqref{KntKtBound} along with \eqref{aldnDefn} and \eqref{aldnBound} show that
 \begin{align}
\frac{d}{dt}\widetilde{L}_n(t)
&\ge- \frac{d \mathcal{C}_{d}t^{\Upsilon-1}}{2 \al_{d} \mathcal{H}_n\pr{t}} \brac{ \int_0^t \pr{\frac s t}^{\frac{d n -2}{2}} \mathcal{K}(s) \, d{s}
+ \frac{\mathcal{C}_d \mathcal{I}(t)}{\al_d \mathcal{H}_n(t)} \int_0^t \pr{\frac s t}^{\frac{d n -2}{2}} \mathcal{J}(s) \, d{s}}
=: \widetilde F_n(t)     
\label{FnBound},
\end{align}
where $\mathcal{I}$, $\mathcal{J}$, and $\mathcal{K}$ are defined in \eqref{IIntegrals} and we have introduced
\begin{align}
\label{HnDefn}
\mathcal{H}_n\pr{t}
= \mathcal{H}_n\pr{t; u, h} 
&:= \int_{\R^d} \abs{u(x,t)}^2 \pr{1 - \frac{\abs{h(x)}^2}{2 d n t}}^{\frac{d n - d - 2}{2}}\chi_{B_{nt}}(h(x)) dx. 
\end{align}

To show that $\widetilde{\mathcal{L}}$ is monotone non-decreasing, it suffices to show that given any $t_0 \in (0, T]$, there exists $\de \in \pr{0, t_0}$ so that $\widetilde{F}_n$ converges uniformly to $0$ on $\brac{t_0 - \de, t_0}$.
Indeed, since $\frac{d}{dt}\widetilde{L}_n(t) \ge \widetilde{F}_n(t)$, then for any $t \in \brac{t_0 - \de, t_0}$, it holds that
\[
\widetilde{L}_n(t_0)-\widetilde{L}_n(t)\ge \int_{t}^{t_0}\widetilde{F}_n(s)ds.
\]
By definition and \eqref{LtoL}, $\widetilde{L}_n(t) = t^\Upsilon L_n(t)$ converges pointwise to $\widetilde{\mathcal{L}}(t)=t^{\Upsilon}\mathcal{L}(t;u, h)$, from which it follows that 
\[
\widetilde{\mathcal{L}}(t_0) - \widetilde{\mathcal{L}}(t)
= \lim_{n \to \iny} \brac{\widetilde{L}_n(t_0)-\widetilde{L}_n(t)} 
\ge \lim_{n \to \iny} \int_{t}^{t_0}\widetilde{F}_n(s)ds.
\]
Assuming the local uniform convergence of $\widetilde F_n$ to 0 on $\brac{t_0 - \de, t_0} \supset \brac{t, t_0}$, we see that
$$\lim_{n \to \iny} \int_{t}^{t_0}\widetilde{F}_n(s)ds 
=  \int_{t}^{t_0} \lim_{n \to \iny} \widetilde{F}_n(s)ds = 0$$
and we may conclude that $\widetilde{\mathcal{L}}(t_0)-\widetilde{\mathcal{L}}(t) \ge 0$, as desired.

It remains to justify the local uniform convergence of $\widetilde{F}_n$ to $0$, as described above. 
Let $t_0 \in (0, T]$ and recall that since $u$ is non-trivial, then backward uniqueness ensures that $\disp \int_{\R^d} \abs{u\pr{x,t_0}}^2 dx > 0$ so that $\mathcal{L}\pr{t_0}$ is well-defined.

We first consider the terms in the denominator of $\widetilde F_n$, defined as $\mathcal{H}_n\pr{t}$ above.
Observe that for any $n \in \N$, we have from \eqref{HnDefn} that 
\begin{equation}
\label{denomFn}
\begin{aligned}
\mathcal{H}_n\pr{t} 
&\ge \int_{\set{\abs{h(x)}^2 \le dnt}} \abs{u\pr{x,t}}^2  \pr{1 - \frac{\abs{h(x)}^2}{2 d n t}}^{\frac{d n - d - 2}{2}} d{x} \\
&\ge \int_{\set{\abs{h(x)}^2 \le dnt}} \abs{u\pr{x,t}}^2  \exp\pr{- \frac{\ln 2\abs{h(x)}^2}{2 t} } d{x},
\end{aligned}
\end{equation}
where have used a Taylor expansion to show that if $\abs{h(x)}^2 \le dnt$, then
\begin{align*}
\log \brac{\pr{1 - \frac{\abs{h(x)}^2}{2 d n t}}^{\frac{d n - d - 2}{2}}}
&= - \frac{d n - d - 2}{dn} \frac{\abs{h(x)}^2}{4 t} \brac{ 1 + \frac 1 2 \pr{\frac{\abs{h(x)}^2}{2 d n t}} + \frac 1 3 \pr{\frac{\abs{h(x)}^2}{2 d n t}}^2 + \ldots } \\
&\ge - \frac{\abs{h(x)}^2}{4 t} \brac{ 1 + \frac 1 2 \pr{\frac 1 2} + \frac 1 3 \pr{\frac 1 2}^2 + \ldots }
= - \frac{\ln 2 \abs{h(x)}^2}{2 t} .
\end{align*}
Set $K_m = \set{x : \abs{h(x)} \le m}$ and note that each $K_m$ is compact, the sets are nested, and $\disp \R^d = \bigcup _{m \in \N} K_m$.
Thus, for any positive real number $\disp 2H \le \int_{\R^d} \abs{u\pr{x,t_0}}^2 dx$, there exists $M \in \N$ so that 
$$\int_{K_M} \abs{u\pr{t_0}}^2  d{x} \ge \frac 3 2 H.$$
Fix some $\mu < \min\set{\frac {t_0} 2, T - t_0}$.
Since $u$ is continuous and $K_M \times \brac{t_1 - \mu, t_1 + \mu}$ is compact, then there exists $\de \in (0, \mu]$ so that whenever $x \in K_M$ and $\abs{t - t_0} \le \de$, it holds that
$$\abs{u(x, t) - u(x, t_0)} < \sqrt{\frac{H}{2\abs{K_M}}}.$$
In particular, if $t \in \brac{t_0 - \de, t_0}$, then
\begin{align*}
 \int_{K_M} \abs{u\pr{x,t}}^2  d{x}
 &\ge \int_{K_M} \abs{u\pr{x,t_0}}^2  d{x}
 - \int_{K_M} \abs{u\pr{x,t} - u\pr{x,t_0}}^2  d{x}
 \ge H.
\end{align*}
If $N \in \N$ is large enough so that $d N \pr{t_0 - \de} \ge M^2$, then $\set{\abs{h(x)}^2 \le d N \pr{t_0 - \de}} \supset K_M$.
It follows that for any $n \ge N$ and any $t \in \brac{t_0 - \de, t_0}$, we have from \eqref{denomFn} that
\begin{equation}
\label{DnBound}
 \begin{aligned}
\mathcal{H}_n\pr{t}
&\ge \int_{\set{\abs{h(x)}^2 \le dNt}} \abs{u\pr{x,t}}^2  \exp\pr{- \frac{\ln 2\abs{h(x)}^2}{2 t} } d{x} 
\ge e^{- \frac{dN \ln 2 }{2} } \int_{K_M} \abs{u\pr{x,t}}^2  d{x} \\
&\ge H e^{- \frac{dN \ln 2 }{2} }
> 0.
\end{aligned}   
\end{equation}
In particular, we have a uniform lower bound on all $\mathcal{H}_n\pr{t}$ for $n \ge N$ and all $t \in \brac{t_0 - \de, t_0}$.

Since $u \in \mathfrak{A}\pr{\R^d \times \pr{0, T}, h}$, then there exists $p > 1$ so that \eqref{AlmgrenClass2} holds.
Returning to the expression \eqref{FnBound}, an application of H\"older's inequality shows that
\begin{equation*}
\begin{aligned}
\int_0^t \pr{\frac s t}^{\frac{d n -2}{2}} \mathcal{J}(s) \, d{s}
&= \int_0^t \pr{\frac s t}^{\frac{d n + d -2}{2}} \pr{\frac s t}^{-\frac{d}{2}} \mathcal{J}(s) \, d{s}
\le \brac{\int_0^t \pr{\frac s t}^{\frac{d n + d -2}{2}\frac{p}{p-1}} d{s}}^{\frac{p-1}{p}} \norm{\mathcal{J}}_{L^{p}\pr{\brac{0, t}, s^{- \frac d 2} ds}} \\
&= t^{\frac{p-1}{p}} \pr{\int_0^1 \tau^{\frac{p\pr{d n + d -2}}{2\pr{p-1}}} d \tau }^{\frac{p-1}{p}} \norm{\mathcal{J}}_{L^{p}\pr{\brac{0, t}, s^{- \frac d 2} ds}}
= \brac{\tfrac{2t\pr{p-1}}{n \pr{d p + \frac{dp-2}{n}}} }^{\frac{p-1}{p}} \norm{\mathcal{J}}_{L^{p}\pr{\brac{0, t}, s^{- \frac d 2} ds}}. 
\end{aligned}
\end{equation*}
In particular, 
\begin{equation}
\label{Ij3IntBound}
\sup_{t \in \brac{t_0 - \de, t_0}} \int_0^t \pr{\frac s t}^{\frac{d n -2}{2}} \mathcal{J}(s) \, d{s}
\le c_{p,d} \pr{\frac{t_0}{n} }^{1 -\frac{1}{p}} \norm{\mathcal{J}}_{L^{p}\pr{\brac{0, t}, s^{- \frac d 2} ds}}
\end{equation}
and an identical argument holds with $\mathcal{K}$ in place of $\mathcal{J}$.
Assuming that $\de \le \eps$ from \eqref{AlmgrenClass1} (which holds by possibly redefining $\de$), we put  \eqref{AlmgrenClass1}, \eqref{DnBound} and \eqref{Ij3IntBound} together in \eqref{FnBound} to see that whenever $n \ge N$,
\begin{align*}
\inf_{t \in \brac{t_0 - \de, t_0}} \widetilde F_n(t)
&\ge - \frac{ \mathcal{C}_{d,p}t_0^{\Upsilon-\frac 1 p}e^{ \frac{dN \ln 2 }{2} } }{2 \al_d H n^{1 - \frac 1 p}} \brac{ \norm{\mathcal{K}}_{L^{p}\pr{\brac{0, t_0}, t^{- \frac d 2} dt}}
+ \frac{\mathcal{C}_d e^{\frac{dN \ln 2}{2}}}{H} \norm{\mathcal{I}}_{L^\iny\pr{\brac{t_0 - \de,t_0}}} \norm{\mathcal{J}}_{L^{p}\pr{\brac{0, t_0}, t^{- \frac d 2} dt}}}.
\end{align*}
The required version of uniform convergence follows from this bound and completes the proof.
\end{proof}

\section{Alt-Caffarelli-Friedman Monotonicity Formula}
\label{S:ACF}

In the groundbreaking work of Alt-Caffarelli-Friedman \cite{ACF}, the authors study two-phase elliptic free boundary problems. 
The monotonicity formula described in Proposition \ref{ACFelliptic}, which we refer to as ACF, is a crucial tool in their work since it is used to establish Lipschitz continuity of minimizers, and study the regularity of the free boundary.  

\begin{prop}[\cite{ACF}, Lemma 5.1]
\label{ACFelliptic}
For some $R > 0$, let $B_R \su \R^N$.
Let $v_1, v_2$ be two non-negative functions that belong to $C^0\pr{B_R}\cap W^{1,2}\pr{B_R}$.
Assume that $\LP v_1 \ge 0$ and $\LP v_2 \ge 0$ in the sense of distributions, $v_1 v_2 \equiv 0$ and $v_1\pr{0} = v_2\pr{0} = 0$.
Then for all $r < R$,
\begin{equation}
\phi\pr{r; v} = \frac{1}{r^4} \pr{ \int_{B_r}  \abs{\gr v_1\pr{y}}^2 \abs{y}^{2-N} d{y} } \pr{ \int_{B_r} \abs{\gr v_2\pr{y}}^2 \abs{y}^{2-N} d{y}}
\label{2PPhiDef}
\end{equation}
is monotonically non-decreasing in $r$.
\label{HFBP}
\end{prop}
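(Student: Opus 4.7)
The plan is to show $\phi'(r) \ge 0$ by passing to the logarithmic derivative. Writing $J_i(r) := \int_{B_r} |\gr v_i|^2 |y|^{2-N}\, dy$ so that $\phi(r) = r^{-4} J_1(r) J_2(r)$, we have
$$\frac{\phi'(r)}{\phi(r)} = -\frac{4}{r} + \frac{J_1'(r)}{J_1(r)} + \frac{J_2'(r)}{J_2(r)},$$
and a routine polar-coordinates computation gives $J_i'(r) = r^{2-N} \int_{\del B_r} |\gr v_i|^2 d\si$. The heart of the argument is a one-sided bound on the ratio $J_i'(r)/J_i(r)$.

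To produce such a bound, I would introduce the spherical caps $\Sigma_i(r) = \{v_i > 0\} \cap \del B_r$, regarded as subsets of $S^{N-1}$ after rescaling by $1/r$, and their first Dirichlet eigenvalues $\la_i(r)$ for the spherical Laplacian. Define the characteristic exponents
$$\al_i(r) = -\frac{N-2}{2} + \sqrt{\pr{\frac{N-2}{2}}^2 + \la_i(r)}.$$
The target estimate is $J_i'(r)/J_i(r) \ge 2\al_i(r)/r$. I would derive this by integrating by parts on $B_r \setminus B_\eps$, using that $|y|^{2-N}$ is harmonic away from the origin and that $v_i \LP v_i \ge 0$ in the sense of distributions; sending $\eps \to 0$ using $v_i(0)=0$; and then decomposing $|\gr v_i|^2$ on $\del B_r$ into its radial and tangential components. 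Expanding $v_i|_{\del B_r}$ in the Dirichlet eigenbasis on $\Sigma_i(r)/r$ and comparing against the unique homogeneous harmonic function of degree $\al_i(r)$ matching $v_i$ on the spherical cap produces the bound through a Rayleigh-quotient / Cauchy--Schwarz comparison.

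Combining the two ratio bounds gives
$$\frac{\phi'(r)}{\phi(r)} \ge \frac{2}{r}\brac{\al_1(r) + \al_2(r) - 2},$$
so the proof reduces to the Friedland--Hayman inequality: for any two disjoint open subsets of $S^{N-1}$ with characteristic exponents $\al_1, \al_2$, one has $\al_1 + \al_2 \ge 2$. Since the hypothesis $v_1 v_2 \equiv 0$ forces $\Sigma_1(r) \cap \Sigma_2(r) = \emptyset$ for almost every $r$, Friedland--Hayman applies and delivers $\phi'(r) \ge 0$.

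The main obstacle is the Friedland--Hayman inequality itself, which relies on a delicate spherical symmetrization argument and which I would invoke as a classical black-box result. A secondary technical point is justifying the integration by parts when $v_i$ is merely in $C^0 \cap W^{1,2}$ with $\LP v_i$ a non-negative Radon measure; this is standard and handled via mollification of $v_i$, performing the computation on the regularized functions, and passing to the limit, together with an inner exhaustion $B_r \setminus B_\eps$ to manage the singularity of $|y|^{2-N}$ at the origin (using $v_i(0)=0$ and the subharmonic mean-value inequality to see that the boundary term on $\del B_\eps$ vanishes in the limit).
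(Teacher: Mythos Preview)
Your proposal is correct and follows the standard approach to the ACF monotonicity formula. Note that the paper does not give its own proof of this proposition---it is cited from \cite{ACF}---but the paper does prove a generalization (Corollary~\ref{C:ACFellipticnonzero}), and the argument there, when specialized to $\kappa_i=1$, $\ell_i=0$, $\Upsilon_i=0$, follows essentially the same route you describe: integration by parts with mollification and an inner exhaustion $B_r\setminus B_\eps$ to bound $\int_{B_r}|\nabla v_i|^2|y|^{2-N}\,dy$ by boundary terms on $S_r$, a spherical Rayleigh-quotient estimate (packaged via a parameter $\be_i$ rather than an eigenfunction expansion, but equivalent) yielding the characteristic exponent $\ga_i$ with $\ga_i(\ga_i+N-2)=1/\al_i$, and then the Friedland--Hayman inequality to conclude $\ga_1+\ga_2\ge 2$.
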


Different versions of this formula were proved in \cite{C} by Caffarelli, and by Caffarelli and Kenig in \cite{CK} to show the regularity of solutions to parabolic equations. 
Caffarelli, Jerison, and Kenig in \cite{CJK} further extended these ideas, proving a powerful uniform bound on the monotonicity functional, instead of a monotonicity result. 
Later on, Matevosyan and Petrosyan \cite{MP} proved another such uniform bound for non-homogeneous elliptic and parabolic operators with variable-coefficients. 
ACF-type monotonicity formulas have also been used to study almost minimizers of variable-coefficient Bernoulli-type functionals, see for example \cite{DESVGT}.

In this section, we prove a parabolic version of theorem \ref{ACFelliptic}, given below in Theorem \ref{T:ACFp}.
To the best of our knowledge, this result is also new. 

As in the previous section, to employ the high-dimensional limiting technique to prove this parabolic result, we first need a version of it for solutions to non-homogeneous variable-coefficient elliptic equations.
As similar results for homogeneous variable-coefficient elliptic equations have found numerous applications, this monotonicity result could be interesting in its own right.
Once we have the suitable elliptic result in hand, we employ techniques similar to those in the previous section to establish our parabolic ACF result.

\begin{cor}
\label{C:ACFellipticnonzero}
For some $R > 0$, let $B_R \su \R^N$.
For each $i = 1,2$, we make the following assumptions:
Let $\kappa_i: B_R \to \R_+$ be bounded, elliptic, and regular in the sense that $0 < \la \le \kappa_i \le \La < \iny$ in $B_R$ and $\gr \log\kappa_i \cdot y\in L^\iny(B_R)$.
Define $\Upsilon_i \ge \norm{\gr \log \kappa_i \cdot y}_{L^\iny(B_R)}$.
Let $v_i \in C^0\pr{B_R}\cap W^{1,2}\pr{B_R, \kappa_i \, dy}$ be a non-negative function for which $v_i\pr{0} = 0$, and
$\disp \di(\kappa_i \nabla v_i) \ge \kappa_i \, \ell_i$ in $B_R$ in the sense of distributions, where $\ell_i^-$ is integrable with respect to $\kappa_i \, v_i \abs{y}^{2 + \Upsilon_i - N} dy$ on each $B_r \su B_R$.
Assume further that for every $r < R$, $\Gamma_{i, r} := \text{supp}v_i \cap S_r := \text{supp}v_i \cap \del B_r$ has non-zero measure.
Finally, assume that $N \ge \max\set{\Upsilon_1 , \Upsilon_2} + 2$ and that $v_1 v_2 \equiv 0$.
Then for all $r < R$, we define $\phi\pr{r} = \phi\pr{r; v_1, v_2, \kappa_1, \kappa_2}$ as 
\[
\phi(r)=\frac{1}{r^{4+ \Upsilon_1 + \Upsilon_2}}\pr{\int_{B_r} \kappa_1(y) |\nabla v_1(y)|^2 |y|^{2+\Upsilon_1-N}  dy } \pr{\int_{B_r} \kappa_2(y) |\nabla v_2(y)|^2 |y|^{2+\Upsilon_2-N}  dy}.
\]
If we set $\widetilde \phi\pr{r} = r^{\mu} \phi\pr{r}$, where $\mu = 4\pr{\frac{\La -  \la}{\La}} + \Upsilon_1 + \Upsilon_2$, then it holds that
\begin{equation}
\label{phiTDer}
\begin{aligned}
\frac{d}{dr} \widetilde \phi\pr{r}
&\ge - \frac{2r^{4 \pr{\frac{\La - \la}{\La}} -3}}{ \pr{N-2}} \frac{\pr{\int_{B_r} \kappa_{1} \, \ell_{1}^- v_{1} \abs{y}^{2+ \Upsilon_1-N} dy} \pr{ \int_{B_r} \kappa_{2} \abs{\gr v_{2}}^2 \abs{y}^{2+ \Upsilon_2-N} d{y}} \pr{\int_{S_r} \kappa_{1} \abs{\gr v_{1}}^2 d\si } }{\pr{\int_{S_r}\kappa_{1} \abs{v_{1}}^2 d\si }}  \\
&- \frac{2r^{4 \pr{\frac{\La - \la}{\La}} -3}}{\pr{N-2}}  \frac{ \pr{ \int_{B_r} \kappa_{1} \abs{\gr v_{1}}^2 \abs{y}^{2+ \Upsilon_1-N} d{y} }  \pr{\int_{B_r} \kappa_{2} \, \ell_{2}^- v_{2} \abs{y}^{2+ \Upsilon_2-N} dy} \pr{\int_{S_r} \kappa_{2} \abs{\gr v_{2}}^2 d\si  }}{\pr{\int_{S_r}\kappa_{2} \abs{v_{2}}^2 d\si }}.
\end{aligned}
\end{equation}
\end{cor}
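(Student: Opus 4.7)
The plan is to adapt the classical Alt--Caffarelli--Friedman monotonicity proof while tracking the corrections introduced by the weight $\kappa_i$ and the nonhomogeneity $\ell_i$. I would first introduce the shorthands
\[
J_i(r) := \int_{B_r} \kappa_i \abs{\gr v_i}^2 \abs{y}^{2+\Upsilon_i-N}\, dy, \quad A_i(r) := \int_{S_r} \kappa_i \abs{\gr v_i}^2\, d\si, \quad B_i(r) := \int_{S_r} \kappa_i v_i^2\, d\si,
\]
together with $P_i(r) := \int_{B_r} \kappa_i \ell_i^{-} v_i \abs{y}^{2+\Upsilon_i-N}\, dy$. Since $\mu - 4 - \Upsilon_1 - \Upsilon_2 = -4\la/\La$, we have $\widetilde\phi(r) = r^{-4\la/\La} J_1(r) J_2(r)$, and the coarea formula gives $J_i'(r) = r^{2+\Upsilon_i-N} A_i(r)$. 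Thus the logarithmic derivative takes the form
\[
\frac{\widetilde\phi'(r)}{\widetilde\phi(r)} = -\frac{4\la}{\La\, r} + \frac{r^{2+\Upsilon_1-N} A_1(r)}{J_1(r)} + \frac{r^{2+\Upsilon_2-N} A_2(r)}{J_2(r)},
\]
and the goal reduces to bounding each $r^{2+\Upsilon_i-N} A_i(r)/J_i(r)$ from below by a suitable spherical Rayleigh quotient modulo a $P_i$-error.

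The next step is to test the distributional inequality $\di(\kappa_i \gr v_i) \ge -\kappa_i \ell_i^{-}$ against the non-negative cutoff $v_i \abs{y}^{2+\Upsilon_i-N}\chi_{B_r}$. Expanding $\di(\kappa_i \abs{y}^{2+\Upsilon_i-N}\gr v_i)$ with the product rule, writing $v_i \gr v_i = \tfrac12 \gr(v_i^{2})$, and using the pointwise bound $\abs{\gr \log \kappa_i \cdot y} \le \Upsilon_i$ together with $N-\Upsilon_i-2 \ge 0$, an integration by parts on $B_r$ should yield an inequality of the schematic form
\[
J_i(r) + \tfrac{N-2-\Upsilon_i}{2}\,r^{1+\Upsilon_i - N}\, B_i(r) \le r^{2+\Upsilon_i-N}\int_{S_r} v_i \kappa_i \del_\nu v_i \, d\si + P_i(r).
\]
Cauchy--Schwarz on the boundary integral bounds $\int_{S_r} v_i \kappa_i \del_\nu v_i \le \sqrt{A_i(r)\, B_i(r)}$. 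Viewing the resulting inequality as a quadratic in $\sqrt{A_i B_i}$ and dividing by $J_i(r)$ will then produce a lower bound for $r^{2+\Upsilon_i-N} A_i(r)/J_i(r)$ of the form (spherical Rayleigh quotient of $v_i$) minus a correction proportional to $P_i(r)/J_i(r)$.

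To finish, I would invoke the Friedland--Hayman inequality: since $v_1 v_2 \equiv 0$, the positivity sets $\{v_i > 0\}\cap S_r$ are disjoint subsets of $S_r$, and their characteristic exponents satisfy $\al_1 + \al_2 \ge 2$, giving a universal lower bound of $4/r^2$ on the sum of the unweighted spherical Rayleigh quotients of $v_1$ and $v_2$. The ellipticity sandwich $\la \le \kappa_i \le \La$ transfers this estimate to the $\kappa_i$-weighted quotients at the cost of a multiplicative factor of $\la/\La$. Summing the bounds for $i=1,2$ then cancels the $-4\la/(\La r)$ term in the logarithmic derivative, and the surviving right-hand side collects precisely the $P_i$ contributions, which after multiplying by $\widetilde\phi(r) = r^{-4\la/\La} J_1(r) J_2(r)$ and expanding the shorthands reproduce the right-hand side of \eqref{phiTDer}. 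The hard part will be this final comparison: because $\kappa_i$ is neither constant nor radial, the Friedland--Hayman bound cannot be applied directly to the weighted spherical quotient, and the $\la/\La$ loss from ellipticity together with the drift generated by the radial weight $\abs{y}^{2+\Upsilon_i-N}$ must combine to exactly the factor $4(\La-\la)/\La$ built into $\mu$. Verifying this arithmetic, and in particular checking that the surplus $B_i(r)$ term produced by the integration-by-parts step cooperates correctly with the Cauchy--Schwarz step so that the $P_i$ corrections appear in the product structure displayed in \eqref{phiTDer}, is where the real bookkeeping lies.
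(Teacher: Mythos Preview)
Your overall strategy matches the paper's proof: compute the logarithmic derivative of $\widetilde\phi$, bound each $J_i$ from above by spherical quantities via an integration-by-parts identity involving the weight $|y|^{2+\Upsilon_i-N}$, and close with Friedland--Hayman plus the ellipticity sandwich $\la\le\kappa_i\le\La$. That is exactly the route the paper takes. However, two of your intermediate steps are not right as stated.

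First, the schematic inequality from the integration by parts has the $B_i$ term on the wrong side. Testing $\di(\kappa_i\gr v_i)\ge -\kappa_i\ell_i^-$ against $v_i|y|^{2+\Upsilon_i-N}$ and handling the drift term $v_i\gr v_i\cdot y$ via $\tfrac12\gr(v_i^2)$ and the bound $\Upsilon_i+\gr\log\kappa_i\cdot y\ge 0$ actually gives
\[
J_i(r) \;\le\; r^{2+\Upsilon_i-N}\!\int_{S_r}\kappa_i v_i\,\del_\nu v_i\,d\si \;+\; \tfrac{N-2-\Upsilon_i}{2}\,r^{1+\Upsilon_i-N}B_i(r) \;+\; P_i(r),
\]
with $B_i$ on the right, not the left. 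The paper then weakens $\tfrac{N-2-\Upsilon_i}{2}$ to $\tfrac{N-2}{2}$.

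Second, and more substantively, the passage from this inequality to a bound of the form $J_i - P_i \le c(\gamma_i)\,A_i$ is not ``Cauchy--Schwarz on $\int v_i\kappa_i\del_\nu v_i$ and view as a quadratic in $\sqrt{A_iB_i}$''; that route leaves a dangling $B_i$ term with no way to absorb it into $A_i$. The paper instead splits the spherical gradient into tangential and radial parts, uses the spherical Poincar\'e bound $\int_{\Ga_i}|v_i|^2 \le \al_i\int_{\Ga_i}|\gr_\te v_i|^2$ (where $1/\al_i$ is the first Dirichlet eigenvalue on $\Ga_i$), and then optimizes a parameter $\be_i\in(0,1)$ subject to $\tfrac{1-\be_i^2}{\al_i}=\tfrac{\be_i(N-2)}{\sqrt{\al_i}}$ so that the $B_i$ and cross terms combine into $\tfrac{\La\sqrt{\al_i}}{2\la\be_i}\int_{S_1}\kappa_i|\gr v_i|^2$. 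The ratio $\ga_i:=\be_i/\sqrt{\al_i}$ is exactly the characteristic exponent, and Friedland--Hayman gives $\ga_1+\ga_2\ge 2$. The paper then also needs the upper bound $\ga_i\le \tfrac{\La}{\la(N-2)}\,A_i/B_i$ (from $\ga_i\le 1/(\al_i(N-2))$ and Poincar\'e again) to control the $P_i$-correction in the product form appearing in \eqref{phiTDer}. Your sketch does gesture at this in the last paragraph, but the ``quadratic'' step you describe is not the mechanism that actually works.
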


\begin{rem}
Notice that  the main difference between our $\phi(r)$ and the one defined in \eqref{2PPhiDef} is the introduction of $\Upsilon_1$ and $\Upsilon_2$, which depend on $\gr \kappa_1$ and $\gr \kappa_2$, respectively, appearing in the powers on $\abs{y}$.
In fact, if we set $\kappa_i = 1$, we recover a monotonicity formula very similar to \cite[Corollary 2]{Dav18}, which generalizes Proposition \ref{ACFelliptic} to the non-homogeneous setting.
If each $\ell_i = 0$, the right-hand side of \eqref{phiTDer} vanishes and we reach a true monotonicity result.
\end{rem}

\begin{proof}
Observe that for a.e. $r$,
\begin{align*}
\frac{d}{dr} \int_{B_r} \kappa_i \abs{\gr v_i}^2 \abs{y}^{2 + \Upsilon_i - N}  d{y} 
= r^{2 + \Upsilon_i - N} \int_{S_r} \kappa_i \abs{\gr v_i}^2  d\sigma(y).
\end{align*}
Therefore, for a.e. $r$,
\begin{equation}
\label{phiPrimeK}
\begin{aligned}
\phi^\prime\pr{r} 
=& - \frac{4+ \Upsilon_1 + \Upsilon_2}{r^{5+ \Upsilon_1 + \Upsilon_2}} \pr{ \int_{B_r} \kappa_1 \abs{\gr v_1}^2 \abs{y}^{2+ \Upsilon_1-N} d{y} } \pr{ \int_{B_r} \kappa_2 \abs{\gr v_2}^2 \abs{y}^{2+ \Upsilon_2-N} d{y}}   \\
&+ \frac{r^{2-N}}{r^{4 + \Upsilon_2}} \pr{  \int_{S_r} \kappa_1 \abs{\gr v_1}^2  d\sigma(y) } \pr{ \int_{B_r} \kappa_2 \abs{\gr v_2}^2 \abs{y}^{2+ \Upsilon_2-N}  d{y}}  \\
&+ \frac{r^{2-N}}{r^{4+ \Upsilon_1}} \pr{ \int_{B_r} \kappa_1 \abs{\gr v_1}^2 \abs{y}^{2+ \Upsilon_1-N} d{y} } \pr{ \int_{S_r} \kappa_2 \abs{\gr v_2}^2  d\sigma(y)}.
\end{aligned}
\end{equation}
We want to estimate this derivative.

Throughout this part of the proof, we suppress subscripts for $i = 1,2$ on all functions and exponents.
That is, in place of $v_i$, $\kappa_i$, $\ell_i$, $\Upsilon_i$, we write $v$, $\kappa$, $\ell$, and $\Upsilon$.
Since $\disp \di (\kappa \nabla v)\ge \kappa \, \ell$ in $B_R$, then $\Delta v\ge l$, where we define $l:= -\pr{\ell^- + \nabla \log \kappa \cdot \nabla v}$. 
Define $v_{m}$ and $l_{m}$ to be the mollifications of $v$ and $l$, respectively, at scale $m^{-1}$.
Let $A_{r, \eps} = B_r \setminus B_\eps$ for some $\eps \in \pr{0, r}$.
Integration by parts shows that
\begin{align*}
r^{1 + \Upsilon - N} \int_{S_r} \kappa &\abs{v_m}^2 d\si
= \int_{S_r} \kappa \abs{v_m}^2 \abs{y}^{\Upsilon-N} y \cdot \hat n \, d\si(y) \\
&= \int_{A_{r, \eps}} \di \pr{\kappa\abs{v_m}^2 \abs{y}^{\Upsilon-N} y} dy
+ \int_{S_\eps} \kappa\abs{v_m}^2 \abs{y}^{\Upsilon-N} y \cdot \hat n  \, d\si(y)\\
&=  \int_{A_{r, \eps}} \pr{\Upsilon \kappa + \gr \kappa\cdot y} \abs{v_m}^2 \abs{y}^{\Upsilon-N} dy
+ 2 \int_{A_{r, \eps}} \kappa \, v_m \gr v_m \cdot y \abs{y}^{\Upsilon-N} dy 
+ \eps^{1+ \Upsilon -N } \int_{S_\eps} \kappa\abs{v_m}^2  \, d\si(y).
\end{align*}
Moreover,
\begin{align*}
r^{1 + \Upsilon - N } \int_{S_r} \kappa &\, v_m\gr v_m\cdot y  \, d\si(y)
= \int_{S_r} \kappa \, v_m \abs{y}^{2+ \Upsilon-N} \gr v_m\cdot \hat n  \, d\si(y) \\
&= \int_{A_{r, \eps}} \di \pr{\kappa\gr v_m \, v_m \abs{y}^{2+ \Upsilon -N} } dy
+ \int_{S_\eps}  \kappa \, v_m \abs{y}^{2+ \Upsilon-N} \gr v_m\cdot \hat n  \, d\si(y) \\
&=  \int_{A_{r, \eps}} \kappa\abs{\gr v_m}^2 \abs{y}^{2+ \Upsilon-N} dy
+ \int_{A_{r, \eps}} \kappa \brac{ \LP v_m + \frac{\gr \kappa}{\kappa} \cdot \gr v_m} v_m \abs{y}^{2+ \Upsilon-N} dy \\
&+ \pr{2 + \Upsilon - N} \int_{A_{r, \eps}} \kappa \, v_m \gr v_m \cdot y \abs{y}^{\Upsilon-N} dy 
+ \eps^{2+ \Upsilon-N} \int_{S_\eps}  \kappa \, v_m\gr v_m\cdot \hat n  \, d\si(y).
\end{align*}
Since $\disp \LP v_{m} \ge l_{m} = - \pr{\ell^-}_{m} - \pr{\gr \log \kappa  \cdot \gr v}_{m}$ and $\Upsilon + \gr \log \kappa \cdot y \ge 0$ in $B_R$, then
\begin{align*}
& \frac{N - \Upsilon - 2}{2} r^{1 + \Upsilon - N} \int_{S_r} \kappa \abs{v_m}^2  \, d\si(y)
+ r^{1 + \Upsilon - N } \int_{S_r} \kappa \, v_m\gr v_m\cdot y  \, d\si(y) \\
\ge& \int_{A_{r, \eps}} \kappa\abs{\gr v_m}^2 \abs{y}^{2+ \Upsilon-N} dy
- \int_{A_{r, \eps}} \kappa \brac{\pr{\ell^-}_{m} + \pr{\gr \log \kappa  \cdot \gr v}_{m} - \gr \log \kappa \cdot \gr v_m} v_m \abs{y}^{2+ \Upsilon-N} dy \\
+& \frac{N - \Upsilon - 2}{2}\int_{A_{r, \eps}} \pr{\Upsilon + \gr \log \kappa \cdot y} \kappa\abs{v_m}^2 \abs{y}^{\Upsilon-N}  dy
+ I_\eps \\
\ge& \int_{A_{r, \eps}} \kappa\abs{\gr v_m}^2 \abs{y}^{2+ \Upsilon-N} dy
- \int_{A_{r, \eps}} \kappa \, \pr{\ell^-}_{m} \, v_m \abs{y}^{2+ \Upsilon-N} dy \\
-& \int_{A_{r, \eps}} \kappa \abs{\pr{\gr \log \kappa  \cdot \gr v}_{m} - \gr \log \kappa \cdot \gr v_m} v_m \abs{y}^{2+ \Upsilon -N} dy
+ I_\eps,
\end{align*}
where
\begin{align*}
I_{\eps} &= \frac{N - \Upsilon - 2}{2}\eps^{1+\Upsilon-N} \int_{S_\eps} \kappa\abs{v_m}^2  \, d\si(y)
+ \eps^{2+ \Upsilon-N} \int_{S_\eps}  \kappa \, v_m\gr v_m\cdot \hat n  \, d\si(y).
\end{align*}
By rearrangement, that $A_{r, \eps} \su B_r$ and $\Upsilon \ge 0$, it follows that
\begin{align*}
\int_{A_{r,\varepsilon}} &  \kappa\abs{\gr v_m}^2 \abs{y}^{2+ \Upsilon-N} dy
\le \frac{N - 2}{2} r^{1 + \Upsilon - N} \int_{S_r} \kappa \abs{v_m}^2  \, d\si(y)
+ r^{1 + \Upsilon - N } \int_{S_r} \kappa \, v_m\gr v_m\cdot y  \, d\si(y) \\
&+ \int_{B_{r}} \kappa \, \pr{\ell^-}_{m} v_m \abs{y}^{2+ \Upsilon-N} dy
+ \int_{B_{r}} \kappa \abs{\pr{\gr \log \kappa  \cdot \gr v}_{m} - \gr \log \kappa \cdot \gr v_m} v_m \abs{y}^{2+ \Upsilon-N} dy
- I_\eps.
\end{align*}
Since $\gr v_m$ is bounded and $\Upsilon \ge 0$, then $I_{\eps} \to 0$ as $\eps \to 0$.
In particular, the right-hand side of the previous inequality is bounded independent of $\eps$.
Accordingly, we may take $\eps \to 0$, eliminate $I_\eps$ on the right-hand side, and replace $A_{r, \eps}$ with $B_r$ on the left.

Now we integrate with respect to $r$, $r_0 < r < r_0 + \de$, divide through by $\de$, then take $m \to \iny$ to get
\begin{align*}
\fint_{r_0}^{r_0 + \de} \int_{B_r} \kappa\abs{\gr v}^2 & \abs{y}^{2+ \Upsilon-N} dy \, dr
\le \frac{N - 2}{2} \fint_{r_0}^{r_0 + \de} r^{1 + \Upsilon - N} \int_{S_r} \kappa \abs{v}^2  \, d\si(y) \, dr \\
&+ \fint_{r_0}^{r_0 + \de} r^{1 + \Upsilon - N } \int_{S_r} \kappa \, v\gr v\cdot y  \, d\si(y) \, dr 
+ \fint_{r_0}^{r_0 + \de} \int_{B_{r}} \kappa \, \ell^- v \abs{y}^{2+ \Upsilon-N} dy \, dr .
\end{align*}
By taking $\de \to 0$, it follows that for a.e. $r_0 > 0$
\begin{align*}
\int_{B_{r_0}} \kappa \abs{\gr v}^2 \abs{y}^{2+ \Upsilon-N} dy
&\le \frac{N - 2}{2} r_0^{1 + \Upsilon - N} \int_{S_{r_0}} \kappa \, \abs{v}^2 \, d\si(y)
+ r_0^{1 + \Upsilon - N } \int_{S_{r_0}} \kappa \, v\gr v\cdot y \, d\si(y) \\
&+ \int_{B_{r_0}} \kappa \, \ell^- v \abs{y}^{2+ \Upsilon -N} dy.
\end{align*}
After reintroducing the subscripts, we see that for a.e. $r > 0$, 
\begin{equation}
\label{gradKTermEst}
\begin{aligned}
\int_{B_{r}} \kappa_i\abs{\gr v_{i}}^2 \abs{y}^{2+ \Upsilon_i-N} dy
- \int_{B_{r}} \kappa_i \, \ell_{i}^- v_{i} \abs{y}^{2+ \Upsilon_i-N} dy
&\le \frac{N - 2}{2} r^{1 + \Upsilon_i - N} \int_{S_{r}} \kappa_i \abs{v_{i}}^2 \, d\si(y) \\
&+ r^{1 + \Upsilon_i - N } \int_{S_{r}} \kappa_i \, v_{i}\gr v_{i}\cdot y \, d\si(y).
\end{aligned}
\end{equation}
By rescaling, we may assume that $r = 1$.

Let $\gr_\te w$ denote the gradient of  function $w$ on $S^{N-1}$, the unit sphere.
Let $\Ga_i$ denote the support of $v_i$ on $S^{N-1}$ for $i = 1, 2$. 
By assumption, the measures of ${\Ga_1}$ and ${\Ga_2}$ are non-zero.
For $i = 1,2 $, define
\begin{align*}
\frac{1}{\al_i} = \inf \set{ \frac{\int_{\Ga_i} \abs{\gr_\te w}^2 }{\int_{\Ga_i} w^2} : w \in H^{1,2}_0\pr{\Ga_i}}.
\end{align*}
Observe that for any $\be_i \in \pr{0,1}$,
\begin{align*}
\frac{1 - \be_i^2}{\al_i} \int_{S_1} \abs{v_i}^2 d\sigma(y)
&\le \pr{1 - \be_i^2} \int_{S_1} \abs{\gr _\te v_i}^2 d\sigma(y) 
\end{align*}
and 
\begin{align*}
\frac{2 \be_i}{\sqrt{\al_i}} \int_{S_1} \abs{v_i} \abs{ \gr v_i \cdot y} d\sigma(y)
&\le 2 \pr{ \frac{\be_i^2}{ \al_i} \int_{S_1} \abs{v_i}^2 d\sigma(y)}^{\frac 1 2} \pr{ \int_{S_1} \pr{ \gr v_i \cdot y}^2 d\sigma(y)}^{\frac 1 2} \\
&\le \frac{\be_i^2}{\al_i} \int_{S_1} \abs{v_i}^2 d\sigma(y)
+ \int_{S_1} \abs{ \gr_r v_i}^2 d\sigma(y)
\le \int_{S_1} \brac{\be_i^2\abs{\gr _\te v_i}^2 + \abs{ \gr_r v_i}^2}d\sigma(y) .
\end{align*}
If we set 
\begin{equation}
\label{albeRel2}
\frac{1 - \be_i^2}{\al_i} =  \frac{\be_i \pr{N-2}}{\sqrt{\al_i}}
\end{equation}
for $i = 1,2$, then by combining \eqref{gradKTermEst} with the last two inequalities and using that $\la \le \kappa_i(y) \le \La$, we have
\begin{align*}
\int_{B_1} \kappa_i \abs{\gr v_i}^2 \abs{y}^{2+\Upsilon_i-N} d{y} 
&- \int_{B_1} \kappa_i \, \ell^-_{i} v_{i} \abs{y}^{2+ \Upsilon_i-N} dy
\le \frac{N - 2}{2} \int_{S_{1}} \kappa_i \abs{v_{i}}^2 \, d\si(y)
+ \int_{S_1} \kappa_i \, v_{i}\gr v_{i}\cdot y \, d\si(y) \\
&\le  \frac{\La \pr{N - 2}}{2} \int_{S_{1}} \abs{v_{i}}^2\, d\si(y)
+ \La \int_{S_1} \abs{v_{i}} \abs{\gr v_{i}\cdot y} \, d\si(y) \\
&\le \frac{\La \sqrt \al_i}{2\be_i}  \int_{S_{1}} \abs{\gr v_{i}}^2\, d\si(y)
\le \frac{\La \sqrt \al_i}{2 \la \be_i}  \int_{S_{1}} \kappa_i \abs{\gr v_{i}}^2\, d\si(y).
\end{align*}
Substituting these inequalities into \eqref{phiPrimeK} with $r = 1$ gives
\begin{equation*}
\begin{aligned}
\phi^\prime\pr{1} 
&\ge - \pr{4+ \Upsilon_1 + \Upsilon_2} \pr{ \int_{B_1} \kappa_1 \abs{\gr v_1}^2 \abs{y}^{2+ \Upsilon_1-N} d{y} } \pr{ \int_{B_1} \kappa_2 \abs{\gr v_2}^2 \abs{y}^{2+ \Upsilon_2-N} d{y}}   \\
&+ \frac{2 \la \be_1}{\La \sqrt \al_1} \pr{ \int_{B_1} \kappa_1 \abs{\gr v_1}^2 \abs{y}^{2+\Upsilon_1-N} d{y}  - \int_{B_1} \kappa_1 \, \ell^-_{1} v_{1} \abs{y}^{2+ \Upsilon_1-N} dy} \pr{ \int_{B_1} \kappa_2 \abs{\gr v_2}^2 \abs{y}^{2+ \Upsilon_2-N} d{y}}  \\
&+ \frac{2 \la \be_2}{\La \sqrt \al_2} \pr{ \int_{B_1} \kappa_1 \abs{\gr v_1}^2 \abs{y}^{2+ \Upsilon_1-N} d{y} } \pr{\int_{B_1} \kappa_2 \abs{\gr v_2}^2 \abs{y}^{2+\Upsilon_2-N} d{y} 
- \int_{B_1} \kappa_2 \, \ell^-_{2} v_{2} \abs{y}^{2+ \Upsilon_2-N} dy} \\
&= \pr{\frac{2 \la}{\La}\brac{\frac{\be_1}{\sqrt \al_1}+  \frac{\be_2}{\sqrt \al_2}} - 4- \Upsilon_1 - \Upsilon_2} \pr{ \int_{B_1} \kappa_1 \abs{\gr v_1}^2 \abs{y}^{2+ \Upsilon_1-N} d{y} } \pr{ \int_{B_1} \kappa_2 \abs{\gr v_2}^2 \abs{y}^{2+ \Upsilon_2-N} d{y}}   \\
&- \frac{2 \la \be_1}{\La \sqrt \al_1} \pr{\int_{B_1} \kappa_1 \, \ell_{1}^- v_{1} \abs{y}^{2+ \Upsilon_1-N} dy} \pr{ \int_{B_1} \kappa_2 \abs{\gr v_2}^2 \abs{y}^{2+ \Upsilon_2-N} d{y}}  \\
&- \frac{2 \la \be_2}{\La \sqrt \al_2} \pr{ \int_{B_1} \kappa_1 \abs{\gr v_1}^2 \abs{y}^{2+ \Upsilon_1-N} d{y} } \pr{\int_{B_1} \kappa_2 \, \ell_{2}^- v_{2} \abs{y}^{2+ \Upsilon_2-N} dy}.
\end{aligned}
\end{equation*}
The relation \eqref{albeRel2} is satisfied when
$$\frac{\be_i}{\sqrt{\al_i}} = \frac 1 2 \set{\brac{\pr{N-2}^2 + \frac{4}{\al_i}}^{\frac 1 2} - \pr{N-2}}.$$
If we define $\ga_i > 0$ so that $ \ga_i \pr{\ga_i + N - 2} = \frac 1 {\al_i}$ then $ \frac{\be_i}{\sqrt{\al_i}} = \ga_i$ for $i = 1,2$.

As a function that acts on subsets of $S^{N-1}$, $\ga$ was studied in \cite{FH} and it was shown that $ \ga\pr{E} \ge \psi\pr{\frac{\abs{E}}{\abs{S^{N-1}}}}$, where $\psi$ is the decreasing, convex function defined by
\begin{equation*}
\psi\pr{s} = \left\{\begin{array}{ll} \frac 1 2 \log\pr{\frac 1 {4s}} + \frac 3 2 & \text{ if } s < \frac 1 4 \\
2 \pr{1 - s} & \text{ if } \frac 1 4 < s < 1.  \end{array}  \right.
\end{equation*}
We use the notation $\ga_i = \ga\pr{\Ga_i}$ for $i = 1,2$. 
With $ s_i = \frac{\abs{\Ga_i}}{\abs{S^{N-1}}}$, it follows from convexity that
\begin{align*}
\ga_1 + \ga_2
\ge \psi\pr{s_1} + \psi\pr{s_2}
\ge 2\psi\pr{\frac{s_1 + s_2}{2}}
\ge 2\psi\pr{\frac{1}{2}}
= 2.
\end{align*}
Moreover, since each $v_i \in H^{1,2}_0\pr{\Ga_i}$, then
\begin{equation*}
\begin{aligned}
\ga_i 
&= \frac {N-2} 2 \set{\brac{1 + \frac{4}{\al_i\pr{N-2}^2}}^{\frac 1 2} - 1}
\le \frac {N-2} 2 \frac{2}{\al_i\pr{N-2}^2}
= \frac {1}{\al_i \pr{N -2}}
\le \frac{1}{N-2} \frac{\int_{\Ga_i} \abs{\gr_\te v_i}^2 }{\int_{\Ga_i} \abs{v_i}^2} \\
&\le \frac{1}{N-2} \frac{\int_{S_1} \abs{\gr v_i}^2 }{\int_{S_1} \abs{v_i}^2}
\le \frac{\La}{\la \pr{N-2}} \frac{\int_{S_1} \kappa_i \abs{\gr v_i}^2  }{\int_{S_1}\kappa_i \abs{v_i}^2 }.
\end{aligned}
\end{equation*}
Therefore,
\begin{equation*}
\begin{aligned}
\phi^\prime\pr{1} 
&\ge - \brac{4 \pr{\frac{\La - \la}{\La}} + \Upsilon_1 + \Upsilon_2} \phi\pr{1} \\ 
&- \frac{2}{\pr{N-2}}\pr{\int_{B_1} \kappa_1 \, \ell_{1}^- v_{1} \abs{y}^{2+ \Upsilon_1-N} dy} \pr{ \int_{B_1} \kappa_2 \abs{\gr v_2}^2 \abs{y}^{2+ \Upsilon_2-N} d{y}}  \frac{\int_{S_1} \kappa_1 \abs{\gr v_1}^2  }{\int_{S_1}\kappa_1 \abs{v_1}^2 }  \\
&- \frac{2}{\pr{N-2}} \pr{ \int_{B_1} \kappa_1 \abs{\gr v_1}^2 \abs{y}^{2+ \Upsilon_1-N} d{y} } \pr{\int_{B_1} \kappa_2 \, \ell_{2}^- v_{2} \abs{y}^{2+ \Upsilon_2-N} dy}  \frac{\int_{S_1} \kappa_2 \abs{\gr v_2}^2  }{\int_{S_1}\kappa_2 \abs{v_2}^2 }.
\end{aligned}
\end{equation*}

For values of $r \ne 1$, define $v_{i, r}\pr{y} = r^{-1} v_i\pr{r y}$ for $i = 1, 2$ and $\kappa_{i,r}(y)=\kappa_i(ry)$, so that for $0<s,r\le 1,$
\begin{align*}
\phi\pr{s; v_{1, r}, v_{2, r}, \kappa_{1,r}, \kappa_{2,r}}
&=  \frac{1}{s^{4+ \Upsilon_1 + \Upsilon_2}}\pr{ \int_{B_s}  \abs{\gr v_{1,r}}^2 \abs{y}^{2 + \Upsilon_1 -N} \kappa_{1,r} \, d{y} } \pr{ \int_{B_s} \abs{\gr v_{2,r}}^2 \abs{y}^{2+ \Upsilon_2-N} \kappa_{2,r} \, d{y}} \\
&= \frac{1}{(sr)^{4+ \Upsilon_1 + \Upsilon_2}} \pr{ \int_{B_{rs}}  \abs{\gr v_1}^2 \abs{y}^{2 + \Upsilon_1 -N} \kappa_1 \, d{y} } \pr{ \int_{B_{rs}} \abs{\gr v_2}^2 \abs{y}^{2+ \Upsilon_2-N} \kappa_2 \, d{y} } \\
&= \phi\pr{rs; v_1, v_2, \kappa_1, \kappa_2}.
\end{align*}
Thus,
$$\phi'\pr{s; v_{1, r}, v_{2, r}, \kappa_{1,r}, \kappa_{2,r}} 
= \frac{d}{ds}\phi\pr{s; v_{1, r}, v_{2, r}, \kappa_{1,r}, \kappa_{2,r}}
= \frac{d}{ds} \phi\pr{rs; v_1, v_2, \kappa_1, \kappa_2} 
= r \phi'\pr{rs; v_1, v_2, \kappa_1, \kappa_2}$$
and taking $s=1$ gives 
$$\phi'(1; v_{1, r}, v_{2, r}, \kappa_{1,r}, \kappa_{2,r})=r\phi'(r;v_1, v_2, \kappa_1, \kappa_2).$$
Let $\Ga_{i, r}$ denote the support of  $v_{i}$ on $S_r$ and set $ s_{i, r} = \frac{\abs{\Ga_{i, r}}}{r^{N-1} \abs{S^{N-1}}}$ for $i = 1, 2$. 
With $\ell_{i, r}\pr{y} = r \ell_i\pr{r y}$, we have $\disp \text{div}(\kappa_{i,r} \nabla v_{i, r}) \ge \kappa_{i,r} \ell_{i, r}$.
Applying the derivative estimates above to the pair $v_{1, r}$, $v_{2, r}$ then rescaling leads to
\begin{equation*}
\begin{aligned}
\phi^\prime\pr{r} 
&= \frac 1 r \phi'(1; v_{1, r}, v_{2, r}, \kappa_{1,r}, \kappa_{2,r})
\ge - \frac 1 r \brac{4 \pr{\frac{\La - \la}{\La}} + \Upsilon_1 + \Upsilon_2} \phi(1; v_{1, r}, v_{2, r}, \kappa_{1,r}, \kappa_{2,r}) \\ 
&- \frac{2}{r\pr{N-2}}\pr{\int_{B_1} \kappa_{1,r} \, \ell_{1,r}^- v_{1,r} \abs{y}^{2+ \Upsilon_1-N} dy} \pr{ \int_{B_1} \kappa_{2,r} \abs{\gr v_{2,r}}^2 \abs{y}^{2+ \Upsilon_2-N} d{y}}  \frac{\int_{S_1} \kappa_{1,r} \abs{\gr v_{1,r}}^2  }{\int_{S_1}\kappa_{1,r} \abs{v_{1,r}}^2 }  \\
&- \frac{2}{r\pr{N-2}} \pr{ \int_{B_1} \kappa_{1,r} \abs{\gr v_{1,r}}^2 \abs{y}^{2+ \Upsilon_1-N} d{y} } \pr{\int_{B_1} \kappa_{2,r} \, \ell_{2,r}^- v_{2,r} \abs{y}^{2+ \Upsilon_2-N} dy}  \frac{\int_{S_1} \kappa_{2,r} \abs{\gr v_{2,r}}^2  }{\int_{S_1}\kappa_{2,r} \abs{v_{2,r}}^2 } \\
&\ge - \frac 1 r \brac{4 \pr{\frac{\La - \la}{\La}} + \Upsilon_1 + \Upsilon_2} \phi\pr{r; v} \\ 
&- \frac{2}{r^{3 + \Upsilon_1 + \Upsilon_2} \pr{N-2}}\pr{\int_{B_r} \kappa_{1} \, \ell_{1}^- v_{1} \abs{y}^{2+ \Upsilon_1-N} dy} \pr{ \int_{B_r} \kappa_{2} \abs{\gr v_{2}}^2 \abs{y}^{2+ \Upsilon_2-N} d{y}}  \frac{\int_{S_r} \kappa_{1} \abs{\gr v_{1}}^2  }{\int_{S_r}\kappa_{1} \abs{v_{1}}^2 }  \\
&- \frac{2}{r^{3 + \Upsilon_1 + \Upsilon_2}\pr{N-2}} \pr{ \int_{B_r} \kappa_{1} \abs{\gr v_{1}}^2 \abs{y}^{2+ \Upsilon_1-N} d{y} } \pr{\int_{B_r} \kappa_{2} \, \ell_{2}^- v_{2} \abs{y}^{2+ \Upsilon_2-N} dy}  \frac{\int_{S_r} \kappa_{2} \abs{\gr v_{2}}^2  }{\int_{S_r}\kappa_{2} \abs{v_{2}}^2 } .
\end{aligned}
\end{equation*}
That is, with $\mu = 4 \pr{\frac{\La - \la}{\La}} + \Upsilon_1 + \Upsilon_2$, the conclusion described by \eqref{phiTDer} follows.
\end{proof}

In what follows, we introduce our new parabolic ACF-type result for variable-coefficient operators.
We prove this result using only Corollary \ref{C:ACFellipticnonzero} and the tools and ideas that have been developed so far in this paper.
As in the previous section, we first discuss the kinds of solutions that we work with.

Let $u: \R^d \times \pr{0, T} \to \R$ have moderate $h$-growth at infinity, as described in Definition \ref{hGrowth}.
For every $t \in \pr{0, T}$, assume first that $u$ is sufficiently regular to define the functionals $\mathcal{D}\pr{t; u, h}$ and $\mathcal{T}\pr{t; u, h}$ from \eqref{HDTDefs} as well as 
\begin{equation*}
\begin{aligned}
\mathcal{J}^-(t)
&= \mathcal{J}^-(t; u,h) 
= \int_{\R^d} J^-(x,t) \abs{u\pr{x,t}} \exp\pr{-\frac{\abs{h(x)}^2}{4t}} dx,
\end{aligned}
\end{equation*} 
where $J(x,t) = J(x, t; u,h)$ is as defined in \eqref{JDefn} and all derivatives are interpreted in the weak sense.
Then we say that such a function $u$ belongs to the function class $\mathfrak{C}\pr{\R^d \times \pr{0, T}, h}$ if $u$ has moderate $h$-growth at infinity (so is consequently continuous), and for every $t_0 \in \pr{0, T}$, there exists $\eps \in \pr{0, t_0}$ so that
\begin{equation}
\label{ACF1}
\mathcal{D} \in L^\iny\brac{t_0 - \eps, t_0}, 
\quad
\mathcal{T} \in L^\iny\brac{t_0 - \eps, t_0}
\end{equation}
and 
\begin{equation}
\label{ACF2}   
\mathcal{J}^- \in L^{1}\pr{\brac{0, t_0}, t^{- \frac d 2} dt}.
\end{equation}
This is the class of functions that we consider in our result.
As before, this may not be the weakest setting in which our proof holds.

\begin{thm}
\label{T:ACFp}
For each $i = 1, 2$, we make the following assumptions:
Let $\tr\pr{H_i \innp{\gr_z G_i(h_i),h_i}} \in L^\iny(\R^d)$, where $h_i$, $H_i$, and $G_i$ are described by \eqref{xDef}, \eqref{zDef}, and \eqref{Jacobians}.
Define $\Upsilon_i := \norm{ \tr (H_i \innp{\nabla G_i(h_i), h_i}}_{L^\iny\pr{\R^d}}$ and set $A_i = H_i^{-1}\pr{H_i^{-1}}^T: \R^d \to \R^{d \times d}$.
Let $u_i \in \mathfrak{C}\pr{\R^d \times \pr{0, T}, h_i}$ be a non-negative function with $u_i\pr{0,0} = 0$ and $\displaystyle \di \pr{A_i \gr u_i} + \del_t u_i \ge 0$ in $\R^d \times \pr{0,T}$ in the sense of distributions.  
Assume also that $u_1 u_2 \equiv 0$.
For every $t \in \pr{0, T}$, define $\Phi\pr{t} = \Phi\pr{t; u_1, u_2, h_1, h_2}$ as
\begin{equation*}
\Phi\pr{t} 
= \frac{1}{t^{2}} \pr{\int_0^t \int_{\R^d} \pr{\frac s t}^{\frac{\Upsilon_1}2} \langle A_1 \nabla u_1,\nabla u_1\rangle e^{-\frac{\abs{h_1(x)}^2}{4s}} dx \, ds} \pr{\int_0^t \int_{\R^d} \pr{\frac s t}^{\frac{\Upsilon_2}2} \langle A_2 \nabla u_2,\nabla u_2\rangle e^{-\frac{\abs{h_2(x)}^2}{4s}} dx \, ds}.
\end{equation*}
Set $\widetilde \Phi\pr{t} = t^{\frac {\mu} 2} \Phi\pr{t}$, where $\mu = 4\pr{\frac{\La -  \la}{\La}} + \Upsilon_1 + \Upsilon_2$ with $\disp \la^{-1} = \max_{i=1,2}\set{\norm{\det H_i}_{L^\iny}}$ and $\disp \La = \max_{i = 1,2}\set{\norm{\det H_i^{-1}}_{L^\iny}}$. 
Then $\widetilde \Phi\pr{t}$ is monotonically non-decreasing in $t$.
\end{thm}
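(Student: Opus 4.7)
The plan is to mirror the structure of the proof of Theorem \ref{T:parabolicALM}, but using the elliptic non-homogeneous ACF-type Corollary \ref{C:ACFellipticnonzero} in place of Proposition \ref{variableAlmgren}. For each $i = 1, 2$ and each $n \in \N$ with $n \ge 2$, I will introduce the elliptic lifts $v_{i,n}(y) := u_i\pr{F_{d,n}(y)}$ on $B_T^n \su \R^{d \times n}$, together with the associated weights $\kappa_{i,n}(y) := 1/\eta_i\pr{g_i(f_{d,n}(y))}$ from \eqref{kappaDef}. Applying the identity \eqref{chain} of Lemma \ref{ChainRuleLem} to the differential inequality $\di(A_i \gr u_i) + \del_t u_i \ge 0$ and dropping the non-negative leading term $n\brac{\di\pr{A_i \gr u_i} + \del_t u_i}$ yields
\begin{equation*}
\di_y\pr{\kappa_{i,n} \gr_y v_{i,n}} \ge \kappa_{i,n} \, \ell_{i,n}, \qquad \ell_{i,n}(y) := J_i\pr{F_{d,n}(y)},
\end{equation*}
where $J_i$ is defined by \eqref{JDefn} with $(u,h)=(u_i,h_i)$.

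Next, I will verify the remaining hypotheses of Corollary \ref{C:ACFellipticnonzero} with $N = dn$ and $\Upsilon_i$ as in the statement: the two-sided bound $\la \le \kappa_{i,n} \le \La$ follows from the definitions of $\la, \La$ in terms of $\det H_i$; the estimate $\norm{\gr \log \kappa_{i,n} \cdot y}_{L^\iny(B_T^n)} \le \Upsilon_i$ comes from Lemma \ref{ChainRuleLem1}; non-negativity of $v_{i,n}$ and the vanishing $v_{i,n}(0) = u_i(0,0) = 0$ are inherited from $u_i$; the disjointness $v_{1,n} v_{2,n} \equiv 0$ is immediate from $u_1 u_2 \equiv 0$; membership $v_{i,n} \in W^{1,2}(B_T^n, \kappa_{i,n}\, dy)$ is provided by Lemma \ref{functionClassLem}; and the required integrability of $\ell_{i,n}^-$ against $\kappa_{i,n}\, v_{i,n}\, \abs{y}^{2 + \Upsilon_i - dn}\, dy$ on each $B_r^n \su B_T^n$ follows from Corollary \ref{functionClassCor} combined with \eqref{ACF2}, the extra weight $\abs{y}^{\Upsilon_i}$ being bounded on $B_T^n$ by $(2dT)^{\Upsilon_i/2}$. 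Non-triviality of $v_{i,n}\rvert_{\del B_r}$ on each sphere is obtained from backward uniqueness for $u_i$ exactly as in Theorem \ref{T:parabolicALM}. Finally, $N = dn \ge \max\set{\Upsilon_1, \Upsilon_2} + 2$ holds for all sufficiently large $n$.

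With the hypotheses in place, Corollary \ref{C:ACFellipticnonzero} applied to $(v_{1,n}, v_{2,n})$ on $B_{\sqrt{2dt}}^n$ yields a lower bound of the form \eqref{phiTDer} for $\frac{d}{dr}\brac{r^\mu \phi_n(r)}$ at $r = \sqrt{2dt}$, featuring an explicit $1/(dn-2)$ prefactor on the right. The key computation is then to combine Lemmas \ref{PFT} and \ref{ChainRuleLem} to identify each factor $\int_{B_{\sqrt{2dt}}^n} \kappa_{i,n} \abs{\gr v_{i,n}}^2 \abs{y}^{2 + \Upsilon_i - dn}\, dy$ as
\begin{equation*}
d \abs{S^{dn-1}} n \int_0^t \int_{\R^d} \pr{2ds}^{\Upsilon_i/2}\innp{A_i \gr u_i, \gr u_i} K_n\pr{h_i(x), s}\, dx\, ds + \mathcal{O}\pr{1/n},
\end{equation*}
where the $\mathcal{O}(1/n)$ remainder collects the $\frac{\del u_i}{\del s}$ contributions in $\abs{\gr_y v_{i,n}}^2$ from Lemma \ref{ChainRuleLem}. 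Normalizing by a constant $C_n$ depending only on $d$, $n$, $\Upsilon_1$, $\Upsilon_2$ and setting $\Phi_n(t) := C_n^{-1} \phi_n(\sqrt{2dt})$, Lemma \ref{L:uniform}, the bound \eqref{KntKtBound}, and the Dominated Convergence Theorem give $\Phi_n(t) \to c_d\, \Phi(t)$ pointwise in $t \in (0, T)$, with a universal constant $c_d$ that may be absorbed into $C_n$.

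To close, I will replicate the concluding argument of Theorem \ref{T:parabolicALM}. Setting $\widetilde \Phi_n(t) := C_n^{-1}\brac{r^\mu \phi_n(r)}_{r = \sqrt{2dt}}$, the chain rule converts the elliptic bound into $\frac{d}{dt}\widetilde \Phi_n(t) \ge \widetilde F_n(t)$, where $\widetilde F_n$ is a sum of products of $K_n$-weighted space-time integrals in the quantities $J_i^-\, u_i$ and $\innp{A_i \gr u_i, \gr u_i}$, divided by $dn - 2$ and by positive powers of $\int_{S_r^n} \kappa_{i,n} \abs{v_{i,n}}^2 \, d\si$ for $r = \sqrt{2dt}$. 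Integrating in $t$ over $[t, t_0]$ and using the pointwise convergence $\widetilde \Phi_n \to \widetilde \Phi$ reduces the task to showing $\widetilde F_n \to 0$ locally uniformly near each $t_0 \in (0, T]$. The explicit $1/(dn-2)$ prefactor makes this feasible, provided the numerators admit uniform-in-$n$ bounds from \eqref{ACF1}, \eqref{ACF2}, and \eqref{KntKtBound}, while the sphere integrals in the denominators are bounded away from zero via the compact-approximation argument of \eqref{denomFn}--\eqref{DnBound}. The principal technical obstacle will be running this non-degeneracy argument \emph{simultaneously} for $i = 1$ and $i = 2$: since $u_1$ and $u_2$ have disjoint supports, one must verify directly that neither $u_i(\cdot, t_0)$ vanishes identically on a neighborhood of $t_0$, invoking a non-triviality hypothesis on each side (or reducing to the trivial case in which $\Phi \equiv 0$ on the degenerate side, which already yields monotonicity).
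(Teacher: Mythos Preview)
Your overall strategy matches the paper's: lift each $u_i$ to $v_{i,n}$ via $F_{d,n}$, verify the hypotheses of Corollary~\ref{C:ACFellipticnonzero}, use Lemmas~\ref{PFTS}, \ref{PFT}, and \ref{ChainRuleLem} to pass from the elliptic $\phi$ to the parabolic $\Phi$, and then exploit the explicit $1/(dn-2)$ prefactor in \eqref{phiTDer} together with \eqref{ACF1}, \eqref{ACF2}, and the denominator estimate \eqref{denomFn}--\eqref{DnBound} to show $\widetilde F_n \to 0$ locally uniformly. All of this is correct and essentially identical to the paper's argument.

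There is, however, a genuine gap in your treatment of non-triviality. You write that ``non-triviality of $v_{i,n}\rvert_{\del B_r}$ on each sphere is obtained from backward uniqueness for $u_i$ exactly as in Theorem~\ref{T:parabolicALM}.'' This is not available here: in Theorem~\ref{T:parabolicALM} the function $u$ solves the \emph{equation} $\di(A\gr u)+\del_t u = 0$, for which backward uniqueness theorems apply, whereas your $u_i$ only satisfy the \emph{inequality} $\di(A_i\gr u_i)+\del_t u_i \ge 0$, and backward uniqueness fails for sub/supersolutions (e.g.\ $u(x,t)=(\tau-t)_+$). Your parenthetical fallback---``reducing to the trivial case in which $\Phi \equiv 0$''---is the right idea but is not free: if $\abs{\text{supp}\,u_1(\cdot,\tau)} = 0$ at some $\tau$, it is not immediate that the gradient integral $\int_0^\tau\int_{\R^d}(s/\tau)^{\Upsilon_1/2}\innp{A_1\gr u_1,\gr u_1}e^{-\abs{h_1}^2/4s}\,dx\,ds$ vanishes, since $u_1$ may well be nonzero for $s<\tau$. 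The paper handles this by a separate argument: it applies the estimate leading to \eqref{gradKTermEst} on the set $D_{1,n,\tau}=\text{supp}\,v_{1,n}\cap B_\tau^n$, where $v_{1,n}$ has zero boundary data, to bound $\int_{B_\tau^n}\kappa_{1,n}\abs{\gr v_{1,n}}^2\abs{y}^{2+\Upsilon_1-dn}\,dy$ by $\int_{B_\tau^n}\kappa_{1,n}\,\ell_{1,n}^-\,v_{1,n}\abs{y}^{2+\Upsilon_1-dn}\,dy$; after translating via Lemma~\ref{PFT} and passing $n\to\infty$, the right-hand side vanishes because of the extra $1/n$, forcing $\Phi(t)=0$ for all $t\le\tau$. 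You should replace the backward-uniqueness claim with this case split and supply this computation.
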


\begin{proof}
Recall that $B_T^n \su \R^{d \times n}$ is given by \eqref{BtnDefn}.
For each $i = 1,2$, let $\kappa_{i,n}$ be as defined in Lemma \ref{ChainRuleLem1}.
By definition, $\la \le \kappa_{i,n} \le \La$ for every $n \in \N$.
As 
$$\norm{\gr_y \log \kappa_{i, n} \cdot y}_{L^\iny\pr{B_{\sqrt{2dT}}}}
\le \norm{ \text{tr}(H_i \innp{\nabla G_i(h_i), h_i})}_{L^\iny\pr{\R^d}} = \Upsilon_i,$$
then $\gr_y \log \kappa_{i, n} \cdot y \in L^\iny\pr{B_n^T}$.
Define each $\Upsilon_{i,n} := \Upsilon_i$ to be independent of $n$.

Let $u_1$, $u_2$ be as in the statement of the theorem.
For each $i = 1,2$ and $n \in \N_{\ge N}$, define $v_{i, n} : B_T^n \su \R^{d \times n} \to \R$ so that
$$v_{i,n}\pr{y} = u_i\pr{F_{d,n}\pr{y}}.$$
Since each $u_i$ has moderate $h_i$-growth at infinity, then by Lemma \ref{functionClassLem}, $v_{i,n} \in C^0\pr{B_T^n} \cap W^{1,2}\pr{B_T^n, \kappa_{i,n} \, dy}$.
Moreover, each $v_{i,n}$ is non-negative and $v_{1,n} v_{2,n} \equiv 0$.
Assuming without loss of generality that $g(0)=0$, we obtain $v_{i,n}\pr{0} = u_i\pr{0,0} = 0$.

An application of Lemma \ref{ChainRuleLem} shows that
\begin{align*}
\frac{\text{div} ( \kappa_{i,n}\nabla v_{i,n})}{\kappa_{i,n}}
&= n \set{ \text{div} (A_i \nabla u_i) 
+  \frac{\partial u_i}{\partial t}
+ \frac{1}{dn}\brac{2 \innp{A_i \nabla \partial_t u_i , H_i^T h_i}
+ \frac{\partial u_i}{\partial t} \text{tr}(H_i \innp{\nabla_z G_i(h_i), h_i})
+ 2t \frac{\partial^2 u_i}{\partial t^2} }} \\
&\ge \frac{1}{d} \brac{ 2 \innp{A_i \nabla \partial_t u_i , H_i^T h_i}
+ \frac{\partial u_i}{\partial t} \text{tr}(H_i \innp{\nabla_z G_i(h_i), h_i})
+ 2t \frac{\partial^2 u_i}{\partial t^2} }
=:  J_i(x,t),
\end{align*}
where $J_i = J(u_i)$ is defined in \eqref{JDefn} and does not depend on $n$. 
For every $n$, define $\ell_{i,n} : B_{T}^n \to \R$ so that 
$$\ell_{i,n}\pr{y} = J_i\pr{F_{d,n}\pr{y}}$$
and then
$$\di \pr{\kappa_{i,n} \gr v_{i,n}} \ge \kappa_{i,n} \ell_{i,n}.$$

Since each $u_i \in \mathfrak{C}\pr{\R^{d} \times \pr{0,T}, h_i}$, then \eqref{ACF2} in combination with Corollary \ref{functionClassCor} shows that $\ell_{i, n}^- v_{i, n}$ is integrable with respect to $\kappa_n \, dy$ in each $B_t^n$.
In other words, each $\ell_{i,n}^-$ is integrable with respect to $\kappa_{i,n} v_{i,n} \abs{y}^{2 + \Upsilon_i - dn} dy$ on each $B_t^n$.

Let $\Ga_{i, n, t} = \text{supp} \ v_{i, n} \cap S_t^n$. 
For each $t$, the measure of $\text{supp}\ u_i\pr{\cdot, t}$ vanishes if and only if the measure of $\Ga_{i, n, t}$ vanishes for every $n$.
We assume first that for every $t$, the measures of $\text{supp} \ u_1\pr{\cdot, t}$ and $\text{supp} \ u_2\pr{\cdot, t}$ are non-vanishing.
Therefore, for every $i$, $n$ and $t$, $\Ga_{i, n, t}$ has non-zero measure. 
Thus, we may apply Corollary \ref{C:ACFellipticnonzero} to each pair $v_{1, n}$, $v_{2, n}$ on any ball $B_t^n$ for $t \in \pr{0, T}$.

Define $\Phi_n\pr{t} = \frac{4}{\pr{n \abs{S^{d  n - 1}}}^2} \phi\pr{\sqrt{2 d t}; v_{1, n}, v_{2, n}, \kappa_{1, n}, \kappa_{2, n}}$, where $\phi(r)$ is given in Corollary \ref{C:ACFellipticnonzero}.
By Lemmas \ref{ChainRuleLem} and \ref{PFT} 
\begin{equation}
\label{grSqTerm}
\begin{aligned}
\int_{B_t^n} \kappa_{i, n}(y) \abs{\gr v_{i,n}(y) }^2 &  \abs{y}^{2 + \Upsilon_i -d n} dy 
= dn \abs{S^{d n - 1 }} \int_0^t \int_{\R^d} \pr{2 d s}^{\frac{\Upsilon_i}2} \innp{A_i \gr u_i, \gr u_i} K_n(h_i(x),s)dx ds \\
+& 2 \abs{S^{d n - 1 }} \int_0^t \int_{\R^d} \pr{2 d s}^{\frac{\Upsilon_i}2} \frac{\del u_i}{\del s} \brac{\innp{A_i \gr u_{i}, H_i^T h_i} + s \frac{\del u_i}{\del s} } K_n(h_i(x),s)dx ds .
\end{aligned}
\end{equation}
Therefore,
\begin{align*}
\Phi_n\pr{t}
&=  \frac{4 }{\pr{n \abs{S^{d n - 1}}}^2} \frac{1}{\pr{2 d t}^{2 + \frac{\Upsilon_1 + \Upsilon_2}{2}}} \pr{ \int_{B_t^n} \kappa_{1,n}(y) \abs{\gr v_{1, n}\pr{y}}^2 \abs{y}^{2 + \Upsilon_1 - d  n}  dy }  \pr{ \int_{B_t^n} \kappa_{2,n}(y)\abs{\gr v_{2, n}\pr{y}}^2 \abs{y}^{2 + \Upsilon_2 - d  n}  dy} \\
&= \frac{1}{t^{2}} \int_0^t \int_{\R^d} \pr{\frac s t}^{\frac{\Upsilon_1}2}  \brac{ \innp{A_1\, \gr u_1, \gr u_1}
+ \frac{2}{nd} \frac{\del u_1}{\del s} \pr{\innp{A_1 \, \gr u_{1}, H_1^T h_1}
+ s \frac{\del u_1}{\del s} } } K_{n}(h_1(x), s)\, dx ds  \\
&\times \int_0^t \int_{\R^d} \pr{\frac s t}^{\frac{\Upsilon_2}2}  \brac{ \innp{A_2\, \gr u_2, \gr u_2} 
+ \frac{2}{nd} \frac{\del u_2}{\del s} \pr{\innp{A_2\, \gr u_{2}, H_2^T h_2}
+ s \frac{\del u_2}{\del s} } } K_{n}(h_2(x), s) \, dx ds.
\end{align*}
It follows from \eqref{KtnDefn}, \eqref{KntKtBound} and the Dominated Convergence Theorem that
\begin{align*}
\lim_{n \to \iny} \Phi_n\pr{t} 
&= \frac{1}{t^{2}} \prod_{i = 1,2} \pr{ \int_0^t \int_{\R^d} \pr{\frac s t}^{\frac{\Upsilon_i}2} \innp{A_i\, \gr u_i, \gr u_i} K(h_i(x), s)\, dx ds} 
= \Phi\pr{t}.
\end{align*}

Set $\mu = 4\pr{\frac{\La -  \la}{\La}} + \Upsilon_1 + \Upsilon_2$. 
If we then define 
$$\widetilde \Phi_n(t) 
= \frac{4 \pr{2d}^{- \frac {\mu}2}}{\pr{n \abs{S^{d n - 1}}}^2} \widetilde \phi\left(\sqrt{2 d t}\right) 
= \frac{4 \pr{2d}^{- \frac {\mu} 2}}{\pr{n \abs{S^{d n - 1}}}^2} \pr{2 d t}^{\frac {\mu} 2}\phi\left(\sqrt{2 d t}\right),$$
then by analogous arguments, we see that
\begin{equation}
\label{limit}
\lim_{n \to \iny} \widetilde \Phi_n\pr{t} 
= \widetilde \Phi\pr{t} 
:= t^{\frac {\mu} 2} \Phi\pr{t}.
\end{equation}
Moreover, an application of the chain rule shows that
\begin{align*}
\frac{d}{dt} \widetilde \Phi_n(t)
&= \frac{4 \pr{2d}^{- \frac {\mu} 2}}{\pr{n \abs{S^{d n - 1}}}^2} \widetilde \phi^\prime\left(\sqrt{2 d t}\right)  \sqrt{\frac{d}{2t}}
= \frac{2 \pr{2d}^{\frac{1- \mu} 2}}{\pr{n \abs{S^{d  n - 1}}}^2} \frac 1 {\sqrt t} \widetilde \phi^\prime\left(\sqrt{2 d t}\right).
\end{align*}
By Corollary \ref{C:ACFellipticnonzero},
\begin{align*}
\widetilde \phi^\prime\left(\sqrt{2 d t}\right)
&\ge - 2 \frac{\pr{\int_{B_n^t} \kappa_{1,n} \, \ell_{1,n}^- v_{1,n} \abs{y}^{2+ \Upsilon_1-d n} dy} \pr{ \int_{B_n^t} \kappa_{2,n} \abs{\gr v_{2,n}}^2 \abs{y}^{2+ \Upsilon_2-d n} d{y}} \pr{\fint_{S_n^t} \kappa_{1,n} \abs{\gr v_{1,n}}^2 d\si } }{\pr{d n-2} \pr{2 d t}^{ \frac 3 2 - 2 \pr{\frac{\La - \la}{\La}}} \pr{\fint_{S_n^t}\kappa_{1,n} \abs{v_{1,n}}^2 d\si }}  \\
&- 2 \frac{ \pr{ \int_{B_n^t} \kappa_{1,n} \abs{\gr v_{1,n}}^2 \abs{y}^{2+ \Upsilon_1-d n} d{y} }  \pr{\int_{B_n^t} \kappa_{2,n} \, \ell_{2,n}^- v_{2} \abs{y}^{2+ \Upsilon_2-d n} dy} \pr{\fint_{S_n^t} \kappa_{2,n} \abs{\gr v_{2,n}}^2 d\si  }}{\pr{d n-2} \pr{2 d t}^{ \frac 3 2 - 2 \pr{\frac{\La - \la}{\La}}}  \pr{\fint_{S_n^t}\kappa_{2,n} \abs{v_{2,n}}^2 d\si }}.
\end{align*}
Using the functionals that were introduced in \ref{HnDefn} and \ref{HDTDefs}, applications of Lemma \ref{PFTS} show that
\begin{align*}
& \fint_{S_n^t} \kappa_{i,n}(y) \abs{v_{i,n}(y)}^2 \, d\si(y)
= \int_{\R^d} \abs{u_i\pr{x,t}}^2 K_{t, n}\pr{h_i\pr{x}} d{x} \\
&= \frac{\abs{S^{d n -1 - d}} }{\abs{S^{d n -1}} \pr{2 d n t}^{\frac{d}{2}}} \int_{\R^d} \abs{u_i\pr{x,t}}^2   \pr{1 - \frac{\abs{h_i(x)}^2}{2 d n t}}^{\frac{d n - d - 2}{2}}\chi_{B_{nt}}(h_i(x)) d{x}
= \frac{\abs{S^{d n -1 - d}} }{\abs{S^{d n -1}} \pr{2 d n t}^{\frac{d}{2}}} \mathcal{H}_n\pr{t; u_i, \kappa_i}
\end{align*}
and
\begin{align*}
\fint_{S_n^t} \kappa_{i,n}(y)  \abs{\gr v_{i,n}(y)}^2 \, d\si(y) 
&= n \int_{\R^d} \brac{\innp{A_i \gr u_i, \gr u_i} +\frac 2 {dn}\frac{\del u_i}{\del t} \pr{\innp{A_i \gr u_i, H_i^T h_i}  + t \frac{\del u_i}{\del t} }} K_{t, n}\pr{h_i\pr{x}} d{x} \\
&\le 2n \int_{\R^d} \innp{A_i \gr u_i, \gr u_i} K_{t, n}\pr{h_i(x)} d{x}
+ \frac {2t}{d} \int_{\R^d} \pr{1 + \frac{\abs{h_i}^2}{2dnt}} \abs{\frac{\del u_i}{\del t}}^2  K_{t, n}\pr{h_i(x)} d{x} \\
&\le \frac{2n\mathcal{C}_d}{\pr{4 \pi t}^{\frac d 2}} \pr{ \int_{\R^d} \innp{A_i \gr u_i, \gr u_i} e^{-\frac{\abs{h_i(x)}^2}{4t}} d{x}
+ \frac{2t}{dn}  \int_{\R^d} \abs{\frac{\del u_i}{\del t}}^2 e^{-\frac{\abs{h_i(x)}^2}{4t}} d{x}} \\
&= \frac{2n\mathcal{C}_d}{\pr{4 \pi t}^{\frac d 2}} \brac{ \mathcal{D}\pr{t; u_i, \kappa_i}
+ \frac{2t}{dn}  \mathcal{T}\pr{t; u_i, \kappa_i}},
\end{align*}
where we applied Lemma \ref{ChainRuleLem}, Cauchy-Schwarz, then estimate \eqref{KntKtBound}.
If we define 
$$\mathcal{S}_n\pr{t; u, \kappa} = \mathcal{D}\pr{t; u, \kappa}
+ \frac{2t}{dn}  \mathcal{T}\pr{t; u, \kappa},$$ 
then we similarly deduce from \eqref{grSqTerm} that
\begin{equation*}
\begin{aligned}
\int_{B_t^n} \kappa_{i, n}(y) \abs{\gr v_{i,n}(y) }^2 &  \abs{y}^{2 + \Upsilon_i -d n} dy 
\le \frac{\pr{2 d}^{\frac{\Upsilon_i}2 + 1} \mathcal{C}_d n \abs{S^{d n - 1 }}}{\pr{4 \pi}^{\frac d 2}} \int_0^t s^{\frac{\Upsilon_i - d}2} \mathcal{S}_n\pr{s; u_i, \kappa_i} ds.
\end{aligned}
\end{equation*}
Finally, Lemma \ref{PFT} and \eqref{KntKtBound} show that
\begin{align*}
\int_{B_n^t} \kappa_{i,n} \ell_{i,n}^-  v_{i,n} \abs{y}^{2 + \Upsilon_i -n d} \, d{y}
&= d \abs{S^{d n -1}} \int_0^t \int_{\R^d} \pr{2 d s}^{\frac{\Upsilon_i}2} J_i^-(x, s) \, u_i\pr{x, s} \, K_n(h_i(x),s)dx ds \\
&\le \frac{d \pr{2 d}^{\frac{\Upsilon_i}2}\mathcal{C}_d \abs{S^{d n -1}}}{\pr{4 \pi}^{\frac d 2}} \int_0^t \int_{\R^d} s^{\frac{\Upsilon_i - d}2} J_i^-(x, s) \, u_i\pr{x, s} \, e^{-\frac{\abs{h_i(x)}^2}{4s}} dx ds \\
&= \frac{d \pr{2 d}^{\frac{\Upsilon_i}2}\mathcal{C}_d \abs{S^{d n -1}}}{\pr{4 \pi}^{\frac d 2}} \int_0^t  s^{\frac{\Upsilon_i - d}2} \mathcal{J}^-\pr{s; u_i, \kappa_i} ds.
\end{align*}
Then with $\al_{d,n}$ as defined in \eqref{aldnDefn}, we get
\begin{align*}
\widetilde \phi^\prime\left(\sqrt{2 d t}\right)
&\ge -2 t^{- \frac 3 2 + 2 \pr{\frac{\La - \la}{\La}}} \frac{\mathcal{C}_d^3 \pr{2 d}^{\frac{1 + \mu} 2}  \pr{n \abs{S^{d n - 1 }}}^2}{\pr{d n-2} \pr{4 \pi}^{d} \al_{d,n}}
\frac{\mathcal{S}_n\pr{t; u_1, \kappa_1}}{\mathcal{H}_n\pr{t; u_1, \kappa_1}} \\
&\times 
\pr{ \int_0^t  s^{\frac{\Upsilon_1 - d}2}  \mathcal{J}^-\pr{s; u_1, \kappa_1} ds} 
\pr{\int_0^t s^{\frac{\Upsilon_2 - d}2} \mathcal{S}_n\pr{s; u_2, \kappa_2} ds }   \\
&-  2 t^{- \frac 3 2 + 2 \pr{\frac{\La - \la}{\La}}} \frac{\mathcal{C}_d^3 \pr{2 d}^{\frac{1+\mu} 2}  \pr{n \abs{S^{d n - 1 }}}^2}{\pr{d n-2} \pr{4 \pi}^{d} \al_{d,n}} 
\frac{\mathcal{S}_n\pr{t; u_2, \kappa_2}}{\mathcal{H}_n\pr{t; u_2, \kappa_2}} \\
&\times 
\pr{ \int_0^t  s^{\frac{\Upsilon_2 - d}2}  \mathcal{J}^-\pr{s; u_2, \kappa_2} ds} 
\pr{\int_0^t s^{\frac{\Upsilon_1 - d}2} \mathcal{S}_n\pr{s; u_1, \kappa_1} ds } .
\end{align*}
Using \eqref{aldnBound} then shows that
\begin{align*}
\frac{d}{dt} &\widetilde \Phi_n(t)
= \frac{2 \pr{2d}^{\frac{1- \mu} 2}}{\pr{n \abs{S^{d  n - 1}}}^2} \frac 1 {\sqrt t} \widetilde \phi^\prime\left(\sqrt{2 d t}\right)  \\
\ge& - \frac{8 \mathcal{C}_d^3 t^{-\frac{2 \la}{\La}}}{n\pr{1-\frac 2{dn}} \pr{4 \pi}^{d}\al_{d}} 
\frac{\mathcal{S}_n\pr{t; u_1, \kappa_1}}{\mathcal{H}_n\pr{t; u_1, \kappa_1}}
\pr{ \int_0^t  s^{\frac{\Upsilon_1 - d}2}  \mathcal{J}^-\pr{s; u_1, \kappa_1} ds} 
\pr{\int_0^t s^{\frac{\Upsilon_2 - d}2} \mathcal{S}_n\pr{s; u_2, \kappa_2} ds }   \\
&- \frac{8 \mathcal{C}_d^3 t^{-\frac{2 \la}{\La}}}{n\pr{1-\frac 2{dn}} \pr{4 \pi}^{d} \al_{d}} 
\frac{\mathcal{S}_n\pr{t; u_2, \kappa_2}}{\mathcal{H}_n\pr{t; u_2, \kappa_2}} 
\pr{ \int_0^t  s^{\frac{\Upsilon_2 - d}2}  \mathcal{J}^-\pr{s; u_2, \kappa_2} ds} 
\pr{\int_0^t s^{\frac{\Upsilon_1 - d}2} \mathcal{S}_n\pr{s; u_1, \kappa_1} ds } \\
&=: \widetilde{F}_n(t),
\end{align*}

As in the proof of Theorem \ref{T:parabolicALM}, to show that $\widetilde{\Phi}$ is monotone non-decreasing, it suffices to show that given any $t_0 \in (0, T]$, there exists $\de \in \pr{0, t_0}$ so that $\widetilde{F}_n$ converges uniformly to $0$ on $\brac{t_0 - \de, t_0}$.

We first consider the terms in the denominator of $\widetilde F_n$. 
For brevity, we set $\mathcal{H}_{n}^{(i)}(t) = \mathcal{H}_{n}\pr{t; u_i, \kappa_i}$.
By repeating the arguments from \eqref{denomFn} through \eqref{DnBound}, we deduce that there exists $N_i \in \N$ and $\de_i \in \pr{0, t_0}$ so that whenever $n \ge N_i$ and $t \in \brac{t_0 - \de_i, t_0}$, it holds that
\begin{equation*}
\mathcal{H}^{(i)}_n\pr{t}
\ge H_i e^{- \frac{dN_i \ln 2 }{2} }.
\end{equation*}
Assumption \eqref{ACF2} shows that
\begin{align*}
\int_0^t  s^{\frac{\Upsilon_i - d}2}  \mathcal{J}^-\pr{s; u_i, \kappa_i} ds
\le t^{\frac{\Upsilon_i}2} \norm{\mathcal{J}^-\pr{\cdot; u_i, \kappa_i}}_{L^1\pr{\brac{0, t}, s^{- \frac d 2} ds}}.
\end{align*}
Similarly, since $u_i \in \mathfrak{C}\pr{\R^d \times \pr{0, T}, h_i}$ implies that $u_i$ has moderate $h_i$-growth at infinity, then both $\mathcal{D}\pr{\cdot; u_i, h_i}$ and $\mathcal{T}\pr{\cdot; u_i, h_i}$ belong to weighted $L^1\brac{0, t}$ and then
\begin{align*}
\int_0^t  s^{\frac{\Upsilon_i - d}2}  \mathcal{S}_n\pr{s; u_i, \kappa_i} ds
\le t^{\frac{\Upsilon_i}2}\pr{ \norm{\mathcal{D}\pr{\cdot; u_i, \kappa_i}}_{L^1\pr{\brac{0, t}, s^{- \frac d 2} ds}} 
+ \frac{2t}{dn} \norm{\mathcal{T}\pr{\cdot; u_i, \kappa_i}}_{L^1\pr{\brac{0, t}, s^{- \frac d 2} ds}}}.
\end{align*}
Set $\de = \min\set{\de_1, \de_2, \frac{t_0}2}$, $N = \max\set{N_1, N_2}$, and $H = \min\set{H_1, H_2}$, then observe that whenever $t \in \brac{t_0 - \de, t_0}$ and $n \ge N$,
\begin{align*}
\widetilde{F}_n(t)
&\ge - \frac{9 \mathcal{C}_d^3 e^{ \frac{dN \ln 2 }{2}} t^{\frac{\mu}2-2}}{n H \pr{4 \pi}^{d}\al_{d}}
\norm{\mathcal{J}^-\pr{\cdot; u_1, \kappa_1}}_{L^1\pr{\brac{0, t}, s^{- \frac d 2} ds}}
\mathcal{S}_n\pr{t; u_1, \kappa_1} \\
&\times \brac{ \norm{\mathcal{D}\pr{\cdot; u_2, \kappa_2}}_{L^1\pr{\brac{0, t}, s^{- \frac d 2} ds}} 
+ \frac{2t}{dn} \norm{\mathcal{T}\pr{\cdot; u_2, \kappa_2}}_{L^1\pr{\brac{0, t}, s^{- \frac d 2} ds}} } \\
& - \frac{9 \mathcal{C}_d^3 e^{ \frac{dN \ln 2 }{2}} t^{\frac{\mu}2-2}}{n H \pr{4 \pi}^{d}\al_{d}} 
\norm{\mathcal{J}^-\pr{\cdot; u_2, \kappa_2}}_{L^1\pr{\brac{0, t}, s^{- \frac d 2} ds}}
  \mathcal{S}_n\pr{t; u_2, \kappa_2} \\
&\times \brac{ \norm{\mathcal{D}\pr{\cdot; u_1, \kappa_1}}_{L^1\pr{\brac{0, t}, s^{- \frac d 2} ds}} 
+ \frac{2t}{dn} \norm{\mathcal{T}\pr{\cdot; u_1, \kappa_1}}_{L^1\pr{\brac{0, t}, s^{- \frac d 2} ds}} }.
\end{align*}
Assuming that $\de \le \eps$, where $\eps$ is from assumption \eqref{ACF1}, and using that $\disp t \in \brac{\frac{t_0}2, t_0}$, it follows that 
\begin{align*}
\inf_{t \in \brac{t_0 - \de, t_0}} \widetilde{F}_n(t)
&\ge - \frac{36 \mathcal{C}_d^3 e^{ \frac{dN \ln 2 }{2}} t_0^{\frac{\mu}2 -2}}{n H \pr{4 \pi}^{d}\al_{d}}
\brac{ \norm{\mathcal{D}\pr{\cdot; u_2, \kappa_2}}_{L^1\pr{\brac{0, t_0}, t^{- \frac d 2} dt}} 
+ \frac{2t_0}{dn} \norm{\mathcal{T}\pr{\cdot; u_2, \kappa_2}}_{L^1\pr{\brac{0, t_0}, t^{- \frac d 2} dt}} } \\
&\times \norm{\mathcal{J}^-\pr{\cdot; u_1, \kappa_1}}_{L^1\pr{\brac{0, t_0}, t^{- \frac d 2} dt}}
\brac{\norm{\mathcal{D}\pr{\cdot; u_1, \kappa_1}}_{L^\iny\brac{t_0 - \de, t_0}} 
+ \frac{2t_0}{dn} \norm{\mathcal{T}\pr{\cdot; u_1, \kappa_1}}_{L^\iny\brac{t_0 - \de, t_0}}} \\
& - \frac{36 \mathcal{C}_d^3 e^{ \frac{dN \ln 2 }{2}} t_0^{\frac{\mu}2-2}}{n H \pr{4 \pi}^{d}\al_{d}} 
\brac{ \norm{\mathcal{D}\pr{\cdot; u_1, \kappa_1}}_{\pr{\brac{0, t_0}, t^{- \frac d 2} dt}} 
+ \frac{2t_0}{dn} \norm{\mathcal{T}\pr{\cdot; u_1, \kappa_1}}_{\pr{\brac{0, t_0}, t^{- \frac d 2} dt}} } \\
&\times \norm{\mathcal{J}^-\pr{\cdot; u_2, \kappa_2}}_{\pr{\brac{0, t_0}, t^{- \frac d 2} dt}}
\brac{\norm{\mathcal{D}\pr{\cdot; u_2, \kappa_2}}_{L^\iny\brac{t_0 - \de, t_0}} 
+ \frac{2t_0}{dn} \norm{\mathcal{T}\pr{\cdot; u_2, \kappa_2}}_{L^\iny\brac{t_0 - \de, t_0}}}.
\end{align*}
In particular, this shows that $\widetilde F_n$ converges uniformly to $0$, as required. 

We have shown that the proof is complete under the assumption that the measures of $\text{supp} \, u_1\pr{\cdot, t}$ and $\text{supp} \, u_2\pr{\cdot, t}$ are non-vanishing for every $t$.

Now assume that there exists some values of $t$ such that the measure of $\text{supp} \, u_1(\cdot, t)$ or the measure of $\text{supp} \, u_2(\cdot, t)$ vanishes.
Let $\tau$ be the largest such $t$-value.
Without loss of generality, we may assume that $|\text{supp} \, u_1(\cdot, \tau)| = 0$.
Since $\Ga_{1, n, \tau} = \text{supp} \ v_{1, n} \cap S_\tau^n$, it follows that $|\Ga_{1, n, \tau}| = 0$ for every $n$ as well.
Therefore, for every $n$, $\di\pr{\kappa_{1,n} \gr v_{1,n}} \ge \kappa_{1,n} \ell_{1, n}$ in $D_{1, n, \tau} := \text{supp} \, v_{1,n} \cap B_\tau^n$ with $v_{1, n} = 0$ along $\del D_{1, n, \tau}$.
By the arguments used to reach estimate \eqref{gradKTermEst} applied to $v_{1,n}$ on $D_{1,n,\tau}$, we see that
\begin{align*}
\int_{D_{1, n, \tau}} \kappa_{1, n} |\gr v_{1,n}|^2 |y|^{2 + \Upsilon_1 - d n} d{y} 
\le - \int_{D_{1, n, \tau}} \kappa_{1, n} \ell_{1,n} v_{1,n}  |y|^{2 + \Upsilon_1 - d  n} dy
\end{align*}
so that
\begin{align*}
\int_{B_\tau^n}  \kappa_{1, n} |\gr v_{1,n}|^2 |y|^{2 + \Upsilon_1 - d n} d{y} 
\le - \int_{B_\tau^n}  \kappa_{1, n} \ell_{1,n} v_{1,n}  |y|^{2 + \Upsilon_1 - d n} dy.
\end{align*}
As shown above,
\begin{align*}
\frac{\pr{2 d}^{-\frac{\Upsilon_1}2}}{d n\abs{S^{d n - 1 }}} & \int_{B_\tau^n}  \kappa_{1, n} |\gr v_{1,n}|^2 |y|^{2 + \Upsilon_1 - d n} d{y} \\
&= \int_0^t \int_{\R^d} s^{\frac{\Upsilon_1}2} \brac{\innp{A_1 \gr u_1, \gr u_1} + \frac{2}{dn} \frac{\del u_1}{\del s} \pr{\innp{A_1 \gr u_{1}, H_1^T h_1} + s \frac{\del u_1}{\del s} }} K_n(h_1(x),s)dx ds
\end{align*}
and 
\begin{align*}
\frac{\pr{2 d}^{-\frac{\Upsilon_1}2}}{dn \abs{S^{d n - 1 }}} \int_{B_\tau^n}  \kappa_{1, n} \ell_{1,n} v_{1,n}  |y|^{2 + \Upsilon_1 - d n} dy
&= \frac 1 n \int_0^t \int_{\R^d} s^{\frac{\Upsilon_1}2} J_1^-(x, s) \, u_1\pr{x, s} \, K_n(h_1(x),s)dx ds
\end{align*}
from which it follows that for every $n \in \N$,
\begin{align*}
 &\int_0^t \int_{\R^d} s^{\frac{\Upsilon_1}2} \innp{A_1 \gr u_1, \gr u_1} K_n(h_1(x),s)dx ds \\
 &= \int_0^t \int_{\R^d} s^{\frac{\Upsilon_1}2} \brac{\innp{A_1 \gr u_1, \gr u_1} + \frac{2}{dn} \frac{\del u_1}{\del s} \pr{\innp{A_1 \gr u_{1}, H_1^T h_1} + s \frac{\del u_1}{\del s} }} K_n(h_1(x),s)dx ds \\
&- \frac{2}{dn} \int_0^t \int_{\R^d} s^{\frac{\Upsilon_1}2} \frac{\del u_1}{\del s} \pr{\innp{A_1 \gr u_{1}, H_1^T h_1} + s \frac{\del u_1}{\del s} } K_n(h_1(x),s)dx ds \\
&\le \frac 1 n \int_0^t \int_{\R^d} s^{\frac{\Upsilon_1}2} \brac{- J_1^-(x, s) \, u_1\pr{x, s} - \frac 2 d \frac{\del u_1}{\del s} \pr{\innp{A_1 \gr u_{1}, H_1^T h_1} + s \frac{\del u_1}{\del s} }}\, K_n(h_1(x),s)dx ds \\
&\le - \frac 2 {dn} \int_0^t \int_{\R^d} s^{\frac{\Upsilon_1}2} \brac{\frac d 2 J_1(x, s) \, u_1\pr{x, s} +  \frac{\del u_1}{\del s} \pr{\innp{A_1 \gr u_{1}, H_1^T h_1} + s \frac{\del u_1}{\del s} }}\, K_n(h_1(x),s)dx ds,
\end{align*}
since $u_1 \ge 0$.
With $J$ as given in \eqref{JDefn}, we see that
\begin{align*}
& \frac d 2 J_1(x, s) \, u_1\pr{x, s} 
+ \frac{\del u_1}{\del s} \pr{\innp{A_1 \gr u_{1}, H_1^T h_1} + s \frac{\del u_1}{\del s} } \\
&= \brac{ \innp{A_1 \gr\frac{\del u_1}{\del s}, H_1^T h_1}  
+ \frac 1 2 \frac{\del u_1}{\del s} \tr\pr{H_1 \innp{\gr_z G_1(h_1), h_1}}
+ s \frac{\del^2 u_1}{\del s^2} } \, u_1\pr{x, s} 
+ \frac{\del u_1}{\del s} \brac{\innp{A_1 \gr u_{1}, H_1^T h_1} + s \frac{\del u_1}{\del s} } \\
&= \innp{A_1 \gr w_1, H_1^T h_1}
+ s \frac{\del w_1}{\del s}
+ \frac {w_1}2 \tr\pr{H_1 \innp{\gr_z G_1(h_1), h_1}},
\end{align*}
where we have introduced the notation $w_1 = u_1 \frac{\del u_1}{\del s}$.
Therefore,
\begin{align*}
 &\int_0^t \int_{\R^d} s^{\frac{\Upsilon_1}2} \innp{A_1 \gr u_1, \gr u_1} K_n(h_1(x),s)dx ds \\
&\le - \frac {2 \mathcal{C}_d} {dn} \int_0^t \int_{\R^d} s^{\frac{\Upsilon_1}2} \brac{\innp{A_1 \gr w_1, H_1^T h_1}
+ s \frac{\del w_1}{\del s}
+ \frac{w_1} 2  \tr\pr{H_1 \innp{\gr_z G_1(h_1), h_1}}}\, K(h_1(x),s)dx ds.
\end{align*}
In particular,
\begin{align*}
&\int_0^t \int_{\R^d} s^{\frac{\Upsilon_1}2} \innp{A_1 \gr u_1, \gr u_1} K_n(h_1(x),s)dx ds \\
&\le - \lim_{n \to \iny} \frac {2 \mathcal{C}_d} {dn} \int_0^t \int_{\R^d} s^{\frac{\Upsilon_1}2} \brac{\innp{A_1 \gr w_1, H_1^T h_1}
+ s \frac{\del w_1}{\del s}
+ \frac{w_1} 2  \tr\pr{H_1 \innp{\gr_z G_1(h_1), h_1}}}\, K(h_1(x),s)dx ds 
=0.
\end{align*}
Then $\Phi(t) = 0$ for every $t \le \tau$ showing that $\Phi(t)$ is monotonically non-decreasing on $(0, \tau]$.
Since $|\Ga_{1, n, t}| \ne 0$ and $|\Ga_{2, n, t}| \ne 0$ for every $n$ and every $t > \tau$, then by the arguments from the first case, $\Phi(t,u)$ is monotonically non-decreasing whenever $t > \tau$, completing the proof.
\end{proof}

\begin{appendix}

\section{Computational and Technical Proofs}
\label{AppA}

Within this appendix, we have collected the proofs that are either purely computational or technical in nature.
We begin with the proof of Lemma \ref{ChainRuleLem}, which describes a collection of relationships between elliptic functions $v_n : B_{\sqrt{2 d T}} \su \R^{d \times n} \to \R$ and parabolic functions $u : \R^d \times \pr{0, T} \to \R$ that are related through the transformation $F_{d,n}$.
That is, Lemma \ref{ChainRuleLem} describes the relationships between derivatives of $u$ and $v_n$ whenever $v_n(y) = u \pr{F_{d,n}(y)} = u (x, t)$.
The proof of this result relies on the chain rule.

\begin{pf}[Proof of Lemma \ref{ChainRuleLem}]
\label{p:ChainLemProof}
Since $\displaystyle \frac{\del z_k}{\del y_{i,j}} = \de_{k, i}$, then
\begin{align*}
\frac{\del v_n}{\del y_{i,j}}
&= \frac{\del u}{\del x_1} \frac{\del g_1}{\del z_i} + \ldots + \frac{\del u}{\del x_d} \frac{\del g_d}{\del z_i}
+ \frac{\del u}{\del t} \frac{y_{i, j}}{d}
= \innp{\gr_x u, \frac{\del g}{\del z_i}}  + \frac{\del u}{\del t} \frac{y_{i, j}}{d},
\end{align*}
where $\displaystyle \frac{\del g}{\del z_i} $ is the $i^{th}$ column of the Jacobian matrix $G$.
Then we have
\begin{align*}
    y \cdot \gr_y v_n
    &= \sum_{i=1}^d \sum_{j = 1}^n y_{i, j}\brac{\sum_{k=1}^d \frac{\del u}{\del x_k} \frac{\del g_k}{\del z_i}  + \frac{\del u}{\del t} \frac{y_{i, j}}{d}}
    = \sum_{i, k=1}^d \frac{\del u}{\del x_k} \frac{\del g_k}{\del z_i} z_{i}
    + \frac{\del u}{\del t} \sum_{i=1}^d \sum_{j = 1}^n \frac{y_{i, j}^2}{d} \\
    &= \innp{\gr_x u, G(z) z} + 2 t \frac{\del u}{\del t}
    = \innp{A(x)\gr_x u, H(x)^T h(x)} + 2 t \frac{\del u}{\del t}
\end{align*}
and
\begin{align*}
    \abs{\gr_y v_n}^2 
    &= \sum_{i=1}^d \sum_{j = 1}^n \pr{\frac{\del v_n}{\del y_{i,j}}}^2
    = \sum_{i=1}^d \sum_{j = 1}^n\brac{\sum_{k=1}^d \frac{\del u}{\del x_k} \frac{\del g_k}{\del z_i}  + \frac{\del u}{\del t} \frac{y_{i, j}}{d}} \brac{\sum_{\ell=1}^d \frac{\del u}{\del x_\ell} \frac{\del g_\ell}{\del z_i}  + \frac{\del u}{\del t} \frac{y_{i, j}}{d}} \\
    &= n \sum_{i, k, \ell=1}^d \pr{ \frac{\del u}{\del x_k} \frac{\del g_k}{\del z_i} } \pr{ \frac{\del u}{\del x_\ell} \frac{\del g_\ell}{\del z_i} }
    + \frac 2 d \frac{\del u}{\del t} \sum_{i, k=1}^d \pr{\frac{\del u}{\del x_k} \frac{\del g_k}{\del z_i}  } z_{i}    
    + \pr{\frac{\del u}{\del t}}^2 \sum_{i=1}^d \sum_{j = 1}^n \pr{ \frac{y_{i, j}}{d}}^2 \\
    &= n \abs{G(z)^T \gr_x u}^2 
    + \frac 2 d \frac{\del u}{\del t} \brac{\innp{\gr_x u, G(z) \, z}
    + t \frac{\del u}{\del t} }.
\end{align*}

Now we look at the second-order derivatives.
Since $\displaystyle \kappa_n(y) \frac{\del v_n}{\del y_{i,j}} = \sum_{k=1}^d \frac{\del u}{\del x_k} \ga(z) \frac{\del g_k}{\del z_i}  + \frac{\del u}{\del t} \ga(z) \frac{y_{i, j}}{d}$, then
\begin{align*}
\frac{\del}{\del y_{i,j}}\pr{ \kappa_n(y)\frac{\del v_n}{\del y_{i,j}}}
&= \frac{\del}{\del y_{i,j}}\pr{\sum_{k=1}^d \frac{\del u}{\del x_k} \ga(z) \frac{\del g_k}{\del z_i}} + \frac{\del}{\del y_{i,j}}\pr{\frac{\del u}{\del t} \gamma(z)  \frac{y_{i, j}}{d}} \\
&= \sum_{k, \ell=1}^d \frac{\del^2 u}{\del x_\ell \del x_k} \ga \frac{\del g_k}{\del z_i} \frac{\del g_\ell}{\del z_i} 
+ 2\sum_{k=1}^d \frac{\del^2 u}{\del t \del x_k} \ga \frac{\del g_k}{\del z_i} \frac{y_{ij}}{d}
+ \sum_{k=1}^d \frac{\del u}{\del x_k} \frac{\del \ga}{\del z_{i}} \frac{\del g_k}{\del z_i}
+ \sum_{k=1}^d \frac{\del u}{\del x_k} \ga \frac{\del^2 g_k}{\del z_i^2} \\
&+\frac{\del^2 u}{\del t^2} \gamma \pr{\frac{y_{i, j}}{d}}^2
+ \frac{\del u}{\del t} \frac{\del \ga}{\del z_i}  \frac{y_{i, j}}{d}
+ \frac{\del u}{\del t} \ga \frac{1}{d},
\end{align*}
from which it follows that
\begin{equation}
\label{chainRule1}
\begin{aligned}
\frac{1}{\kappa_n(y)}\frac{\del}{\del y_{i,j}}\pr{ \kappa_n(y)\frac{\del v_n}{\del y_{i,j}}}
&= \innp{D^2_x u \, \frac{\del g}{\del z_i}, \frac{\del g}{\del z_i}}
+ \innp{\gr_x u, \frac{\del \log \ga}{\del z_{i}} \frac{\del g}{\del z_i} + \frac{\del^2 g}{\del z_{i}^2}}  
+  \frac 1 d \frac{\del u}{\del t} \\
&+ 2 \innp{\gr_x \pr{\frac{\del u}{\del t}}, \frac{\del g}{\del z_i} \frac{y_{i, j}}{d}}
+ \frac{\del \log \ga}{\del z_{i}} \frac{\del u}{\del t}\frac{y_{i, j}}{d}
+ \frac{\del^2 u}{\del t^2} \pr{\frac{y_{i, j}}{d}}^2.
\end{aligned}
\end{equation}
Because $B(z) = G(z) G^T(z) = A(x)$, then
\begin{align}
\label{2ndOrderTerm}
\sum_{i =1}^d \innp{D^2_x u \, \frac{\del g}{\del z_i}, \frac{\del g}{\del z_i}}
&= \sum_{i, k, \ell=1}^d \frac{\del^2 u}{\del x_k \del x_\ell} \frac{\del g_k}{\del z_i} \frac{\del g_\ell }{\del z_{i}}
= \sum_{k, \ell=1}^d b_{k, \ell}(z) \frac{\del^2 u}{\del x_k \del x_\ell}
= \sum_{k, \ell=1}^d a_{k, \ell}(x) \frac{\del^2 u}{\del x_k \del x_\ell}.
\end{align}
Since $\displaystyle a_{k, \ell}(x) = \sum_{i=1}^d \frac{\del g_k}{\del z_i}\pr{h(x)} \frac{\del g_\ell}{\del z_i}\pr{h(x)}$, then
\begin{equation}
\label{coefGrad}
\begin{aligned}
\frac{\del a_{k, \ell}}{\del x_k}
&= \sum_{i=1}^d \frac{\del}{\del x_k}\pr{\frac{\del g_k}{\del z_i}\pr{h(x)}} \frac{\del g_\ell}{\del z_i}\pr{h(x)}
+ \sum_{i=1}^d \frac{\del g_k}{\del z_i}\pr{h(x)} \frac{\del}{\del x_k}\pr{\frac{\del g_\ell}{\del z_i}\pr{h(x)}} \\
&= \sum_{i, m=1}^d \frac{\del^2 g_k\pr{z}}{\del z_i \del z_m} \frac{\del g_\ell\pr{z}}{\del z_i} \frac{\del h_m}{\del x_k}\pr{g(z)}
+ \sum_{i, m=1}^d \frac{\del^2 g_\ell\pr{z}}{\del z_i \del z_m} \frac{\del h_m}{\del x_k}\pr{g(z)} \frac{\del g_k\pr{z}}{\del z_i}.
\end{aligned}
\end{equation}
Summing \eqref{coefGrad} over $k$, substituting \eqref{logExpression} and using that $\displaystyle \sum_{k =1}^d \frac{\del h_m}{\del x_k} \frac{\del g_k}{\del z_i} = \de_{m, i}$ since $HG = I$, shows that
\begin{equation}
\label{1stOrderTerm}
\begin{aligned}
\sum_{k =1}^d \frac{\del a_{k, \ell}}{\del x_k}
&= \sum_{i, m, k=1}^d \frac{\del h_m}{\del x_k}\pr{g(z)} \frac{\del^2 g_k\pr{z}}{\del z_i \del z_m} \frac{\del g_\ell\pr{z}}{\del z_i} 
+ \sum_{i, m, k=1}^d \frac{\del^2 g_\ell\pr{z}}{\del z_i \del z_m} \frac{\del h_m}{\del x_k}\pr{g(z)} \frac{\del g_k\pr{z}}{\del z_i} \\
&= \sum_{i=1}^d  \frac{\del \log \ga(z)}{\del z_i} \frac{\del g_\ell\pr{z}}{\del z_i}
+ \sum_{i =1}^d \frac{\del^2 g_\ell\pr{z}}{\del z_i^2}.
\end{aligned}
\end{equation}
Therefore,
\begin{equation}
\label{divTerm}
\begin{aligned}
\di_x \pr{A \gr_x u} 
&= \sum_{k = 1}^d \frac{\del }{\del x_k} \pr{\sum_{\ell = 1}^d a_{k, \ell} \frac{\del u}{\del x_\ell} }
= \sum_{k, \ell = 1}^d a_{k, \ell} \frac{\del^2 u}{\del x_\ell \del x_k}
+ \sum_{\ell = 1}^d \pr{\sum_{k = 1}^d\frac{\del a_{k, \ell}}{\del x_k} } \frac{\del u}{\del x_\ell}  \\
&=\sum_{i =1}^d \innp{D^2_x u \, \frac{\del g}{\del z_i}, \frac{\del g}{\del z_i}}
+ \innp{\gr_x u, \frac{\del \log \ga}{\del z_{i}} \frac{\del g}{\del z_i} + \frac{\del^2 g}{\del z_i ^2} },
\end{aligned}
\end{equation}
where we have used \eqref{2ndOrderTerm} and \eqref{1stOrderTerm}.
Summing \eqref{chainRule1} over $i$ and $j$ and substituting \eqref{divTerm} then shows that
\begin{equation*}
\begin{aligned}
\frac{1}{\kappa_n(y)} \di_y \pr{ \kappa_n(y)\gr_y v_n}
&= \frac{1}{\kappa_n(y)} \sum_{i=1}^d \sum_{j=1}^n \frac{\del}{\del y_{i,j}}\pr{ \kappa_n(y)\frac{\del v_n}{\del y_{i,j}}} \\
&= \sum_{j=1}^n \sum_{i=1}^d \brac{\innp{D^2_x u \, \frac{\del g}{\del z_i}, \frac{\del g}{\del z_i}}
+ \innp{\gr_x u, \frac{\del \log \ga}{\del z_{i}} \frac{\del g}{\del z_i} + \frac{\del^2 g}{\del z_{i}^2}}  
+  \frac 1 d \frac{\del u}{\del t}} \\
&+  \sum_{i=1}^d \sum_{j=1}^n \brac{2 \innp{\gr_x \pr{\frac{\del u}{\del t}}, \frac{\del g}{\del z_i} \frac{y_{i, j}}{d}}
+ \frac{\del \log \ga}{\del z_{i}} \frac{\del u}{\del t}\frac{y_{i, j}}{d}
+ \frac{\del^2 u}{\del t^2} \pr{\frac{y_{i, j}}{d}}^2
} \\
&= \sum_{j=1}^n \brac{\di_x \pr{A \gr_x u} +  \frac{\del u}{\del t}} 
+  \frac 2 d \sum_{i, k = 1}^d \frac{\del^2 u}{\del x_k \del t} \frac{\del g_k}{\del z_i}(z) z_i
+ \frac 1 d \frac{\del u}{\del t} \sum_{i =1}^d \frac{\del \log \ga}{\del z_i}(z) z_i
+ \frac{2t}{d} \frac{\del^2 u}{\del t^2} \\
&= n \set{\di_x \pr{A \gr_x u} +  \frac{\del u}{\del t}
+ \frac 1 {dn}\brac{ 2 \innp{\gr_x \frac{\del u}{\del t}, G(z) z}  
+ \frac{\del u}{\del t} \innp{\gr \log \ga(z), z}
+ 2t \frac{\del^2 u}{\del t^2} }} ,
\end{aligned}
\end{equation*}
and the result follows from changing the last set of terms to depend on $x$. 
\end{pf}

Here we prove Lemma \ref{L:uniform} regarding the uniform convergence of our sequence of approximations to the Gaussian.

\begin{proof}[Proof of Lemma \ref{L:uniform}]
\label{p:uniform}
Let 
\begin{align}
\label{aldnDefn}
\alpha_{d,n}
&=\frac{|S^{d n -1-d}|}{|S^{d  n-1}|}\left(\frac{2\pi}{dn}\right)^{\frac{d}{2}}
= \pr{\frac{2 \pi}{d n}}^{\frac{d}{2}} \frac{2 \pi^{\frac{d n- d}{2}}}{\Ga\pr{\frac{d  n- d}{2}}} \frac{\Ga\pr{\frac{d n}{2}}}{2 \pi^{\frac{d  n}{2}}}
=  \pr{\frac{2}{d n}}^{\frac{d}{2}} \frac{\Ga\pr{\frac{ d  n}{2}}}{\Ga\pr{\frac{d  n}{2} - \frac d 2}} .
\end{align}
If $\frac d 2 \in \Z$, then
\begin{align*}
\Ga\pr{\frac{d  n}{2}}
&= \pr{\frac {d  n}{2} - 1} \pr{\frac {d  n}{2} - 2} \ldots \pr{\frac {d  n}{2} - \frac d 2} \Ga\pr{\frac{d  n}{2} - \frac d 2} \\
&= \pr{\frac{d n}{2}}^{\frac d 2} \pr{1 - \frac{2}{d  n}} \pr{1 - \frac{4}{d  n}} \ldots \pr{1 - \frac{d}{d  n}} \Ga\pr{\frac{d  n}{2} - \frac d 2}
\end{align*}
and we see that
$$\al_{d,n} = \prod_{j=1}^{\frac d 2} \pr{1 - \frac{2j}{d n}}.$$
Stirling's formula shows that
$$\Ga(z) = \sqrt{\frac{2\pi}{z}} \pr{\frac z e}^z \pr{1 + \mathcal{O}\pr{\frac 1 z}}$$
so we see that
\begin{align*}
\al_{d,n} 
&=  \pr{\frac{2}{n d}}^{\frac{d}{2}} \frac{\sqrt{\frac{2\pi}{\frac{d n}{2}}} \pr{\frac {\frac{d  n}{2}} e}^{\frac{d n}{2}} \pr{1 + \mathcal{O}\pr{\frac 2 {d n}}}}{\sqrt{\frac{2\pi}{{\frac{d  n}{2} - \frac d 2}}} \pr{\frac {\frac{d n}{2} - \frac d 2} e}^{\frac{d n}{2} - \frac d 2}\pr{1 + \mathcal{O}\pr{\frac 2 {d  n- d}}} } 
=  \brac{e\pr{1 - \frac 1 n}^{n}}^{-\frac d 2}  \pr{1 - \frac 1 n}^{\frac{d+1}{2}}  \pr{1 + \mathcal{O}\pr{\frac 2 {d n}}}.
\end{align*}
In both cases, it holds that $\al_{d,n} \to 1$ as $n \to \infty$.
Therefore, there exists $\al_d$ so that for every $n \in \N$,
\begin{equation}
\label{aldnBound}
\al_{d,n} \le \al_d.
\end{equation}

To analyze the remaining piece of $K_n(x,t)$, let $m=\frac{d n}{2}$, $\delta=\frac{d+2}{2}$, $z=\frac{|x|^2}{4t}$. 
Notice that $|x|^2<2dnt$ if and only if $0\le z<m$, and $\left(1-\frac{|x|^2}{2dnt}\right)^{\frac{d n-d-2}{2}}=\left(1-\frac{z}{m}\right)^{m-\delta}.$ 
Define 
\[
f_m(z)=\left(1-\frac{z}{m}\right)^{m-\delta}\chi_{\{0\le z< m\}}(z).
\]
We show $f_m$ converges uniformly to $f(z)=e^{-z}\chi_{\{z\ge 0\}}(z)$.  
First, notice that if $z>m$, then $|f(z)-f_m(z)|=e^{-z}<e^{-m}$.

If $0\le z\le m$, then
\begin{equation}
\label{f}
\begin{aligned}
    |f(z)-f_m(z)|
    &=\left|e^{-z}-\left(1-\frac{z}{m}\right)^m+\left(1-\frac{z}{m}\right)^m-\left(1-\frac{z}{m}\right)^{m-\delta}\right|\\
    &\le \left|e^{-z}-\left(1-\frac{z}{m}\right)^m\right|+\left|\left(1-\frac{z}{m}\right)^{m-\delta}-\left(1-\frac{z}{m}\right)^m\right|=: \textrm{(I)} +\textrm{(II)}.
\end{aligned}
\end{equation}
We address (I) by showing there exists a constant $c$ such that for all $m \geq 1$
\[ \max_{0 < z < m} \left| e^{-z} - \left(1 - \frac{z}{m}\right)^m \right| \leq c \frac{(\log{m})^2}{m}.\]
Indeed, we have
$$ \log\left[ \left(1 - \frac{z}{m}\right)^m\right] = m \log \left(1 - \frac{z}{m} \right).$$
For $0 < z < m$, this means that $\log\left(1-\frac{z}{m}\right)\in (0,1)$. 
Since $\log{x} < x-1$ for all $x > 0$, then
$$ \log\left[ \left(1 - \frac{z}{m}\right)^m\right] \leq - z.$$ Consequently, $\left(1-\frac{z}{m}\right)^m$ is smaller than $e^{-z}$. 
It remains to obtain an upper bound on 
$$ \max_{0 \leq z \leq m} \brac{e^{-z} -  \left(1 - \frac{z}{m}\right)^m}.$$
Using a Taylor expansion, we have
$$ \log{(1-x)} = -x - \frac{x^2}{2} + \mathcal{O}(x^3).$$
Thus
$$ \log\left[ \left(1 - \frac{z}{m}\right)^m\right] = -z - \frac{z^2}{2m} + \mathcal{O}\left(\frac{z^3}{m^2}\right),$$
and
$$  \left(1 - \frac{z}{m}\right)^m = e^{-z} e^{-\frac{z^2}{2m}} e^{\mathcal{O}\left(z^3/m^2\right)}.$$
We split the problem into two parts, by bounding
$$ \max_{0 \leq z \leq \alpha} \brac{e^{-z} -  \left(1 - \frac{z}{m}\right)^m}$$
and
$$  \max_{\alpha \leq z \leq m} \brac{e^{-z} -  \left(1 - \frac{z}{m}\right)^m}$$
for some $\al$ to be determined.
For the second part, observe that
$$  \max_{\alpha \leq z \leq m} \brac{e^{-z} -  \left(1 - \frac{z}{m}\right)^m} \leq   \max_{\alpha \leq z \leq m} e^{-z} = e^{-\alpha}.$$
As for the first bound, we note that
$$ e^{-z} -  \left(1 - \frac{z}{m}\right)^m = e^{-z}  \left(1 -e^{-\frac{z^2}{2m}}  e^{\mathcal{O}\left(\frac{z^3}{m^2}\right)}\right).$$
If $\frac{z^2}{2m} \rightarrow 0$, then $\frac{z^3}{m^2} \rightarrow 0$ as well and we can bound this term using another Taylor approximation as
 $$  e^{-z}  \left(1 -e^{-\frac{z^2}{2m}}  e^{\mathcal{O}\left(\frac{z^3}{m^2}\right)}\right) = \mathcal{O}\left(\frac{z^2}{m}\right).$$
 By choosing $\alpha = \log{m}$, we obtain
 $$  \max_{\alpha \leq z \leq m} \brac{e^{-z} -  \left(1 - \frac{z}{m}\right)^m } \leq   \max_{\alpha \leq z \leq m} e^{-z} = e^{-\alpha} \leq \frac{1}{m}$$
 and
 $$ \max_{0 \leq z \leq \alpha} \brac{e^{-z} -  \left(1 - \frac{z}{m}\right)^m} \leq \mathcal{O}\left( \frac{(\log{m})^2}{m} \right).$$
This proves that $\disp \textrm{(I)} \le c \frac{(\log{m})^2}{m}$.

To estimate (II), let $g(x)=(1-x)^{m-\delta}-(1-x)^m$, with $0\le x\le 1$. 
Notice  that $g(0)=g(1)=0$ and $g(x)>0$ for $x\in (0,1)$. 
Moreover,
\[
g'(x)=-(m-\delta)(1-x)^{m-\delta-1}+m(1-x)^{m-1}=(1-x)^{m-1-\delta}\brac{m(1-x)^{\delta}-(m-\delta)}.
\]
Therefore $g'(x)\ge 0$ if and only if $x\le 1-\left(1-\frac{\delta}{m}\right)^{1/\delta}.$ We conclude that $g$ attains its maximum at $x_0=1-\left(1-\frac{\delta}{m}\right)^{1/\delta}$, and
\[
g(x_0)=\frac{\delta}{m}\left(1-\frac{\delta}{m}\right)^{\frac{m}{\delta}-1}.
\]
Consequently, $g(x)\in\left[0,\frac{\delta}{m}\right]$ for all $x\in[0,1]$. 

By combining the estimates obtained for (I) and (II) and \eqref{f}, we conclude that for all $z \ge 0$,
\[
|f(z)-f_m(z)|\le c\frac{\left(\log m\right)^2}{m} +e^{-m}+\frac{\delta}{m}.
\]
In particular, this shows that $f_m \to f$ uniformly, and the conclusion of the lemma follows.
\end{proof}

Finally, we give the computational proof of Lemma \ref{kernelEquation}.

\begin{proof}[Proof of Lemma \ref{kernelEquation}]
Let $u_K(x, t) = K(h(x), t) = \frac{1}{\pr{4 \pi t}^{d/2}} \exp\pr{- \frac{\abs{h(x)}^2}{4t}}$.
Computations show that
\begin{align*}
\frac{\del u_K}{\del t }
&= - \frac d{2t} u_K
+ \frac{\abs{h(x)}^2}{\pr{2t}^2} u_K 
\end{align*}
and
\begin{align*}
\frac{\del u_K}{\del {x_i}}
&= - \frac{1}{2t} \sum_{j=1}^d \frac{\del h_j}{\del {x_i} }\, h_j \, u_K.
\end{align*}
Since $\disp a_{k,i} = \sum_{\ell=1}^d \frac{\del g_k}{\del {z_\ell} }(h(x)) \frac{\del g_i}{\del {z_\ell} }(h\pr{x})$ and $\disp \frac{\del g_i}{\del {z_\ell} }(h\pr{x}) \frac{\del h_j}{ \del{x_i} }(x) = \de_{\ell, j}$, then
\begin{align*}
\sum_{i = 1}^d a_{k,i} \frac{\del u_K}{\del {x_i}}
&= - \frac{1}{2t} \sum_{i, j, \ell=1}^d \frac{\del g_k}{\del {z_\ell} }(h(x)) \frac{\del g_i}{\del {z_\ell} }(h\pr{x}) \frac{\del h_j}{ \del{x_i} }(x) h_j(x) u_K
= - \frac{1}{2t} \sum_{ j=1}^d \frac{\del g_k}{\del {z_j} }(h(x)) h_j(x) u_K.
\end{align*}
Therefore
\begin{align*}
\di \pr{A \gr u_K} 
&= \sum_{k = 1}^d \frac{\del}{\del x_k} \brac{\sum_{i = 1}^d a_{k,i} \frac{\del u_K}{\del {x_i}} }
= - \frac{1}{2t} \sum_{k = 1}^d \frac{\del}{\del x_k} \brac{ \sum_{ j=1}^d \frac{\del g_k}{\del {z_j} }(h(x)) h_j(x) u_K} \\
&= - \frac{1}{2t} \sum_{ j,k=1}^d \frac{\del g_k}{\del {z_j} }(h(x)) h_j(x) \frac{\del u_K}{\del x_k}
- \frac{1}{2t} \sum_{ j,k=1}^d \pr{\frac{\del g_k}{\del {z_j} }(h(x)) \frac{\del h_j}{\del x_k} + \frac{\del^2 g_k}{\del {z_j} \del z_\ell }(h(x)) \frac{\del h_\ell}{\del x_k} h_j}u_K  \\
&= \frac{1}{\pr{2t}^2} \sum_{ j, k, \ell =1}^d \frac{\del g_k}{\del {z_j} }(h(x)) h_j \frac{\del h_\ell}{\del {x_k} } h_\ell u_K
- \frac{1}{2t} \sum_{ j,k=1}^d \pr{\frac{\del g_k}{\del {z_j} }(h(x)) \frac{\del h_j}{\del x_k} + \frac{\del^2 g_k}{\del {z_j} \del z_\ell }(h(x)) \frac{\del h_\ell}{\del x_k} h_j}u_K  \\
&= \frac{\abs{h(x)}^2}{\pr{2t}^2} u_K
- \frac{d}{2t} u_K
- \frac{1}{2t} \sum_{ j, k, \ell=1}^d \frac{\del^2 g_k}{\del {z_j} \del z_\ell }(h(x)) \frac{\del h_\ell}{\del x_k} h_j \, u_K \\
&= \frac{\del u_K}{\del t }
- \frac{1}{2t} \sum_{ j, k, \ell=1}^d \frac{\del^2 g_k}{\del {z_j} \del z_\ell }(h(x)) \frac{\del h_\ell}{\del x_k} h_j \, u_K,
\end{align*}
as claimed.
\label{p:uKEqn}
\end{proof}

\end{appendix}

\bibliographystyle{plain}
\bibliography{Bibliography.bib}
\end{document}